\newtheorem{theorem}{Theorem}[section]
\newtheorem{lemma}[theorem]{Lemma}
\newtheorem{lemmaa}{Lemma}
\newtheorem{lemmab}{Lemma}
\newtheorem{corollary}[theorem]{Corollary}
\theoremstyle{definition}
\theoremstyle{remark}
\newtheorem{remark}[theorem]{Remark}
\numberwithin{equation}{section}
     \definecolor{red}{rgb}{0.9,0,0}
     \definecolor{green}{rgb}{0,0.6,0}
     \definecolor{rb}{rgb}{0.6,0,0.2}     %===provisionally ok
     \definecolor{pass}{rgb}{0,0,0.7}%{0,0,0}%
     \definecolor{blue}{rgb}{0,0,0}
\newcommand{\pt}{\partial}
\newcommand{\R}{\mathbb{R}}
\newcommand{\LL}{{\mathcal L}}
\renewcommand{\AA}{{\mathcal A}}
\newcommand{\RR}{{\mathcal R}}
\renewcommand{\sim}{\simeq}
\newcommand {\beq} {\begin{equation}}
\newcommand {\eeq} {\end{equation}}
\newcommand {\beqa} {\begin{eqnarray}}
\newcommand {\eeqa} {\end{eqnarray}}
\newcommand {\beqann} {\begin{eqnarray*}}
\newcommand {\eeqann} {\end{eqnarray*}}
\begin{document}

\title[L1 method for a fractional-derivative problem]%
{%Counterexample:
Error analysis of the L1 method on graded and uniform meshes for a fractional-derivative problem
in two and three dimensions}

%    Information for first author
\author{Natalia Kopteva}
%    Address of record for the research reported here
\address{Department of Mathematics and Statistics,
University of Limerick, Limerick, Ireland}
%    Current address
%\curraddr{Department of Mathematics and Statistics,
%Case Western Reserve University, Cleveland, Ohio 43403}
\email{natalia.kopteva@ul.ie}

\thanks{The author is grateful to Dr Xiangyun Meng of Beijing Computational Science Research Center  for his helpful comments on an earlier
version of this manuscript.
The author
acknowledges financial support
from Science Foundation Ireland Grant SFI/12/IA/1683.
}
%
%\thanks{This research was supported by an Irish
%Research Council for Science and Technology (IRCSET) postdoctoral
%fellowship  and a Science Foundation Ireland grant %08/RFP/MTH1536
%under the
%Research Frontiers Programme 2008.
%}

%    General info
\subjclass{Primary 65M06, 65M15, 65M60}
%
%  65N06   Finite difference methods
%  65N12   Stability and convergence of numerical methods
%  65N15   Error bounds
%  65N30   Finite elements, Rayleigh-Ritz and Galerkin methods, finite methods
%  65N50   Mesh generation and refinement

\date{}%September 14, 2012 and, in revised form, October xx, 2012.}

\keywords{fractional-order parabolic equation,
L1 scheme, graded mesh}

\begin{abstract}
An initial-boundary value problem with a Caputo time derivative of fractional order $\alpha\in(0,1)$ is considered,
 solutions of which typically exhibit a singular behaviour at an initial time.
For this problem, we give
a simple framework for the analysis of the error of
L1-type discretizations on graded and uniform temporal meshes
in the $L_\infty$ and $L_2$ norms.
This framework is employed in the analysis of both finite difference and finite element spatial discretiztions.
Our theoretical findings are illustrated by numerical experiments. %for a two-dimensional model problem.
\end{abstract}

\maketitle

\section{Introduction}
The purpose of this paper is to give a simple framework for the analysis of the error in the $L_\infty(\Omega)$ and $L_2(\Omega)$ norms for   L1-type discretizations of the fractional-order parabolic problem
\beq\label{problem}
\begin{array}{l}
D_t^{\alpha}u+\LL u=f(x,t)\quad\mbox{for}\;\;(x,t)\in\Omega\times(0,T],\\[0.2cm]
u(x,t)=0\quad\mbox{for}\;\;(x,t)\in\pt\Omega\times(0,T],\qquad
u(x,0)=u_0(x)\quad\mbox{for}\;\;x\in\Omega.
\end{array}
\eeq
This problem is posed in a bounded Lipschitz domain  $\Omega\subset\R^d$ (where $d\in\{1,2,3\}$). The operator $D_t^\alpha$, for some $\alpha\in(0,1)$,
is the Caputo fractional derivative in time
defined \cite{Diet10} by
\begin{equation}\label{CaputoEquiv}
D_t^{\alpha} u(\cdot,t) :=  \frac1{\Gamma(1-\alpha)} \int_{0}^t(t-s)^{-\alpha}\, \pt_s u(\cdot, s)\, ds
    \qquad\text{for }\ 0<t \le T,
\end{equation}
where $\Gamma(\cdot)$ is the Gamma function, and $\pt_s$ denotes the partial derivative in $s$.
The spatial operator $\LL$ is a linear second-order elliptic operator:
\beq\label{LL_def}
\LL u := \sum_{k=1}^d \Bigl\{-\pt_{x_k}\!(a_k(x)\,\pt_{x_k}\!u) + b_k(x)\, \pt_{x_k}\!u \Bigr\}+c(x)\, u,
\eeq
with sufficiently smooth coefficients $\{a_k\}$, $\{b_k\}$ and $c$ in $C(\bar\Omega)$, for which we assume that $a_k>0$ in $\bar\Omega$,
and also either $c\ge 0$ or $c-\frac12\sum_{k=1}^d\pt_{x_k}\!b_k\ge 0$.
{\color{blue}All our results also apply to the case $\LL=\LL(t)$, while some remain valid for a more general uniformly-elliptic $\LL$ (i.e. with mixed second-order derivatives);
 see Remark~\ref{LL_gen}.}

Throughout the paper, it will be assumed that there exists a unique solution of this problem in $C(\bar\Omega\times[0,T])$ such that $|\pt_t^l u(\cdot,t)|\lesssim 1+t^{\alpha-l}$ for $l=0,1,2$
(the notation $\lesssim$ is rigourously defined in the final paragraph of this section). This is a realistic assumption, satisfied by typical solutions of problem \eqref{problem},
in contrast to a stronger assumption $|\pt^l u(\cdot,t)|\lesssim 1$ frequently made in the literature
(see, e.g., references in \cite[Table~1.1]{laz_2fully_16}).
Indeed,
 \cite[Theorem~2.1]{stynes_too_much_reg} shows
that if a solution $u$ of \eqref{problem} is less singular than we assume (in the sense that $|\pt_t^l u(\cdot,t)|\lesssim 1+t^{\gamma-l}$ for $l=0,1,2$ with any $\gamma>\alpha$),
then the initial condition $u_0$ is uniquely defined by the other data of the problem, which is clearly too restrictive.
At the same time, our results can be easily applied to the case of $u$ having no
singularities or exhibiting a somewhat different singular behaviour at $t=0$.

We consider L1-type schemes for problem \eqref{problem}, which employ the discetization of  $D^\alpha_tu$ defined, for $m=1,\ldots,M$, by
\vspace{-0.1cm}
\beq\label{delta_def}
\delta_t^{\alpha} U^m :=  \frac1{\Gamma(1-\alpha)} \sum_{j=1}^m \delta_t U^j\!\int_{t_{j-1}}^{t_j}\!\!(t_m-s)^{-\alpha}\, ds,
\qquad
\delta_t U^j:=\frac{U^j-U^{j-1}}{t_j-t_{j-1}},\vspace{-0.1cm}%
\eeq
when associated with the temporal mesh $0=t_0<t_1<\ldots <t_M=T$ on $[0,T]$.
Similarly to \cite{stynes_etal_sinum17}, our main interest will be in graded temporal meshes as they offer an efficient way of computing reliable numerical approximations of solutions singular at $t=0$.
We shall also consider uniform temporal meshes, as although the latter have lower convergence rates near $t=0$, they have been shown to be first-order accurate for $t\gtrsim 1$
\cite{gracia_etal_cmame,laz_L1}.
\smallskip

{\it Novelty.} {\color{blue}
We present a new framework for the estimation of the %temporal-discret\-iz\-ation
error whenever an L1 scheme is used on graded or uniform temporal meshes.
This framework is simple, applies to both finite difference and finite element spatial discretizations, and works for error estimation in both  $L_2(\Omega)$ and  $L_\infty(\Omega)$ norms.
It easily extends to general elliptic operators $\LL=\LL(t)$, as well as  quasi-uniform and quasi-graded temporal meshes.
Naturally, it yields versions of some previously-known error bounds as particular cases. It is also used here to establish entirely new results.

%\begin{itemize}[leftmargin=0.7cm]

%\item
Graded meshes for problem of type \eqref{problem} for the case $d=1$
were recently considered  in
\cite{stynes_etal_sinum17}, where maximum  norm error bounds are obtained for finite difference discretizations.
In comparison, our analysis deals with temporal-discretization errors on graded meshes in an entirely different and substantially more concise way.
To be more precise, we use more intuitive integral representations of the temporal truncation errors; see Lemma~\ref{lem_simplest}.
Once error bounds on graded meshes are established for a paradigm problem without spatial derivatives,
they seamlessly extend to finite difference and finite element spatial discretizations  of  \eqref{problem} for any $d\ge 1$.
Our results on graded meshes are new for finite element discretizations, as well as for finite difference discretizations for $d>1$.

%\item
The convergence behaviour of the L1 method on uniform temporal meshes is well-understood. In particular,
for finite element spatial discretizations,
the errors
in the $L_2(\Omega)$ norm have been estimated in  \cite{laz_L1} using Laplace transform techniques (for $\LL=-\triangle$ and $f=0$).
For finite difference discretizations for $d=1$,
a similar error bound the maximum norm  was established in \cite{gracia_etal_cmame}.
Within our theoretical framework, we easily get versions of error bounds of \cite{laz_L1} and \cite{gracia_etal_cmame}.
Furthermore, we give error bounds for finite element discretizations in the
$L_\infty(\Omega)$ norm on uniform temporal meshes, which appear to be entirely  new.
(Some error bounds in the $L_\infty$ norm for linear-finite-element spatial semi-discretizations are given in \cite{Must18}.)

Our approach to uniform meshes is very similar to the case of graded meshes. The main difference is in that now we employ a more subtle
stability property of the discrete fractional-derivative operator $\delta_t^\alpha$   from
 \cite{gracia_etal_cmame}, a version of which is also given in \cite{JZ17}; see Lemma~\ref{lem_main_stability_star}.
Additionally, we give a considerably shorter and more intuitive proof of this stability result.
 %Rather than to rely on elaborate sum evaluations related to the inverse of $\delta_t^\alpha$, here we employ
This new proof relies on % uses the discrete maximum principle with
a simple barrier function, and may be of independent interest; see Appendix~\ref{app_B}.}
\smallskip
%\item
%\end{itemize}
%\newpage

%We present a simple framework for the estimation of the temporal-discretization error whenever an L1 scheme is used on graded or uniform temporal meshes.
%This framework is employed in the analysis of both finite difference and finite element spatial discretiztions.
%For the former, we obtain, in a substantially more concise way, the maximum norm error bounds of \cite{stynes_etal_sinum17}
%for the case $d=1$, as well as extend them to $d=2,3$.
%For finite element spatial discretizations,
%the errors on uniform temporal meshes
%in the $L_2(\Omega)$ norm have been estimated in  \cite{laz_L1},
%while all
%our error bounds
%on graded meshes, as well as
%those in the $L_\infty(\Omega)$ norm on uniform temporal meshes, appear to be entirely  new.

{\it Outline.}
%The paper is organised as follows..
We start by  presenting, in \S\ref{sec_paradigm}, a paradigm for the temporal-error analysis
using a simplest example without spatial derivatives.
This error analysis is extended in \S\ref{sec_semidiscr} to temporal semidiscretizations of \eqref{problem}.
%Fully discrete schemes that employ finite-difference and finite-element spatial discretizations
Full discretizations that employ finite differences and finite elements
are respectively addressed
 in \S\ref{sec_FD} and \S\ref{sec_FE}.
Finally, the assumptions on the derivatives of the exact solution are  discussed in \S\ref{sec_dervts},
and our theoretical findings are illustrated by numerical experiments in \S\ref{sec_Num}.
\smallskip

{\it Notation.}
We write
 $a\sim b$ when $a \lesssim b$ and $a \gtrsim b$, and
$a \lesssim b$ when $a \le Cb$ with a generic constant $C$ depending on $\Omega$, $T$, $u_0$ and
%$C_f$ (and possibly $\bar C_f$),
$f$,
but not %on other essential quantities.
%
 %In particular,
% $C$  does not depend
 on the total numbers of degrees of freedom in space or time.
  Also, for %$\mathcal{D}\subset\bar\Omega $,
  $1 \le p \le \infty$, and $k \ge 0$,
  we shall use the standard norms
% $\|\cdot\|_{L_p(\mathcal{D})}$
%  and $\|\cdot\|_{W_p^k(\mathcal{D})}$ respectively
  in the spaces $L_p(\Omega)$ and the related Sobolev spaces $W_p^k(\Omega)$,
  while  $H^1_0(\Omega)$ is the standard space of functions in $W_2^1(\Omega)$ vanishing on $\pt\Omega$.
%  , where $|\cdot |_{W_p^k(\mathcal{D})}$
%  is the standard Sobolev seminorm, %with integrability index $p$ and smoothness index $k$,
%  and ${\rm osc}(v;\mathcal{D})=\sup_{\mathcal{D}}v-\inf_{\mathcal{D}}v$ for $v\in L_\infty(\mathcal{D})$.

%{\color{blue}
%We shall blend (synthesize) finite difference and finite elements techniques, such as maximum-principle-flavoured arguments (used to get stability properties of discrete fractional operators) as well as elliptic reconstructions %will be helpful when analyising spatial finite element discretizations...
%
%Our convergence results are most close to the recent paper [ST], where graded temporal meshes  were considered for a problem of type ..  with the one-dimensional spatial domain $\Omega=(0,1)$, we essentially get similar results %only for spatial domains in two and three dimensions.
%Importantly, compared to [St], as well as , we employ a significantly simpler approach to the analysis of the error induced by the temporal discretization.
%To be more precise, here we representing the key ingredients of the truncation error in terms of two simple integrals, thus entirely avoiding lengthy/tedious series representations widely used in ..... as well as in ....)

%\section*{Preliminaries. Temporal error analysis paradigm}

\section{Paradigm for the temporal-discretization error analysis}\label{sec_paradigm}

\subsection{Graded temporal mesh}
Throughout the paper, we shall frequently consider the graded temporal mesh
$\{t_j=T(j/M)^r\}_{j=0}^M$ with some $r\ge 1$ (while $r=1$ generates a uniform mesh).
%With $\tau_j:=t_j-t_{j-1}$,
For this mesh, a calculation shows that
\beq\label{t_grid}
%\tau_1\simeq M^{-r},\qquad
%t_j\lesssim c t_{j-1}\;\;\mbox{and}\;\;
\tau_j:=t_j-t_{j-1}\simeq M^{-1}\,t_j^{1-1/r}%\lesssim M^{-1}
\qquad
\mbox{for\;\;}j=1,\ldots,M.
\eeq
%as well as $\tau_j\lesssim M^{-1}$ for $j\ge1$.
This follows from $\tau_1=t_1\simeq M^{-r}$ for $j=1$,
and $t_j\le 2^{r}  t_{j-1}$
for $j\ge 2$.

{\color{blue}Note that all  results of the paper immediately apply to a quasi-graded mesh defined by $\{t_j=T(\xi_j)^r\}_{j=0}^M$, where $\{\xi_j\}_{j=0}^M$
is a quasi-uniform mesh on $[0,1]$.}

\subsection{Stability properties of the discrete fractional operator $\delta_t^\alpha$}
The definition \eqref{delta_def} of $\delta_t^\alpha$ can be rewritten as
\begin{subequations}\label{kappa_def}
\begin{align}
\label{kappa_def_a}
\delta_t^\alpha V^m &= \underbrace{{\kappa_{m,m}}}_{{}>0} V^m-\sum_{j=1}^m \underbrace{(\kappa_{m,j}-\kappa_{m,j-1})}_{{}>0}V^{j-1},
\\%\eeq
%\intertext{where}
%\beq
\label{kappa_def_b}
\kappa_{m,j}&:= \frac{\tau_j^{-1}}{\Gamma(1-\alpha)} \int_{t_{j-1}}^{t_{j}}\!\! (t_m-s)^{-\alpha}\, ds
\quad \mbox{for}\;\; j=1,\ldots,m,\quad\;\; \kappa_{m,0}:=0.
\end{align}%\eeq
Here
$\kappa_{m,j}$ for $j\ge 1$ is the average of the function $\{\Gamma(1-\alpha)\}^{-1}(t_m-s)^{-\alpha}$ on the interval $s\in(t_{j-1},t_j)$, so
 $\kappa_{m,j-1}\le \kappa_{m,j}$ for all admissible  $j$ and $m$.
\end{subequations}
\smallskip

\begin{lemma}\label{lem_main_stability}
(i)
For any $\{V^j\}_{j=0}^M$ on an arbitrary mesh $\{t_j\}_{j=0}^M$,  one has
$$%\begin{align}\label{stability_a}
%V^0=0,\quad \delta_t^\alpha V^j =  F^j\;\;\forall\,j\ge1
%\quad\Rightarrow\quad
|V^m-V^0|\lesssim \max_{j=1,\ldots,m}\bigl\{t_j^{\alpha}\, |\delta_t^\alpha V^j|\bigr\}
\qquad\mbox{for}\;\; m=1,\ldots M.
%&&\mbox{for}\;\; m=1,\ldots M.
$$%\end{align}
{\color{blue}(ii)
If $V^0=0$ and $\delta_t^\alpha |V^j|\le |F^j|$ for $j=1,\ldots,M$, then
$|V^m|\lesssim \max_{j=1,\ldots,m}\bigl\{t_j^{\alpha}\, |F^j|\bigr\}$
for $m=1,\ldots,M$.}
\end{lemma}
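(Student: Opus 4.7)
The plan is to prove both parts via a discrete comparison principle, tested against the piecewise-constant barrier $W^0:=0$, $W^m:=1$ for $m\ge 1$.

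I would first record the comparison principle. Rearranging \eqref{kappa_def_a} (and using $\kappa_{m,0}=0$) gives
$V^m = \kappa_{m,m}^{-1}\bigl[\delta_t^\alpha V^m + \sum_{j=1}^m(\kappa_{m,j}-\kappa_{m,j-1})V^{j-1}\bigr]$,
with all coefficients strictly positive. Hence if $\Phi^0\le\Psi^0$ and $\delta_t^\alpha\Phi^m\le\delta_t^\alpha\Psi^m$ for each $m\ge 1$, induction on $m$ yields $\Phi^m\le\Psi^m$ for all $m$. Note also that $\sum_{j=1}^m(\kappa_{m,j}-\kappa_{m,j-1})=\kappa_{m,m}$, so $\delta_t^\alpha V^m$ is invariant under the shift $V^j\mapsto V^j-V^0$; thus in (i) one may assume $V^0=0$.

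Next I would evaluate $\delta_t^\alpha$ on the barrier. Telescoping gives $\delta_t^\alpha W^m = \kappa_{m,1}$, and from \eqref{kappa_def_b}, using $(t_m-s)^{-\alpha}\ge t_m^{-\alpha}$ for $s\in[0,t_1]$, one obtains the key lower bound $\kappa_{m,1}\ge t_m^{-\alpha}/\Gamma(1-\alpha)$.

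With these two ingredients in hand, each part follows quickly. For (i), set $G:=\max_{j\le m}\{t_j^\alpha|\delta_t^\alpha V^j|\}$ and $\Psi^m:=\Gamma(1-\alpha)\,G\,W^m$; the previous step gives $\delta_t^\alpha\Psi^m\ge G\,t_m^{-\alpha}\ge \pm\,\delta_t^\alpha V^m$, and applying comparison to $\pm V^m$ versus $\Psi^m$ yields $|V^m|\le\Gamma(1-\alpha)\,G$. For (ii), I would apply the same comparison to the nonnegative sequence $Z^j:=|V^j|$, which satisfies $Z^0=0$ and $\delta_t^\alpha Z^m\le|F^m|\le G\,t_m^{-\alpha}\le\delta_t^\alpha\Psi^m$ for $G:=\max_{j\le m}\{t_j^\alpha|F^j|\}$. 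The one detail worth emphasizing — and the main thing to get right — is that the comparison principle uses only a \emph{one-sided} inequality on $\delta_t^\alpha$, which is exactly why the stated hypothesis $\delta_t^\alpha|V^j|\le|F^j|$ (rather than the stronger $|\delta_t^\alpha V^j|\le|F^j|$) suffices.
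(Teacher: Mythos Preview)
Your proof is correct and essentially the same as the paper's: both rely on the $M$-matrix structure of $\delta_t^\alpha$ together with the key lower bound $\kappa_{m,1}\ge t_m^{-\alpha}/\Gamma(1{-}\alpha)$. The only cosmetic difference is that you argue via an explicit barrier $W^m\equiv 1$ and induction, whereas the paper for (i) argues directly at the index $n$ where $|V^j-V^0|$ is maximal (obtaining $|W^n|\le\kappa_{n,1}^{-1}|F^n|$), and for (ii) introduces an auxiliary sequence solving $\delta_t^\alpha W^j=|F^j|$ before invoking part (i).
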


\begin{proof}
(i)
Let $W^j:=V^j-V^0$; then $W^0=0$, while $\delta_t^\alpha W^j=\delta_t^\alpha V^j=:F^j$,
so  %\eqref{stability_a} is equivalent to
we need to prove that
$|W^m|\lesssim \max_{j\le m}\{t_j^{\alpha}\,|F^j|\}$.
Let $\max_{j\le m}|W^j|=|W^n|$ for some $1\le n\le m$.
Then, by \eqref{kappa_def_a} combined with $W^0=0$, one gets
\beq\label{Wn_estimation}
 \underbrace{{\kappa_{n,n}}}_{{}>0} |W^n|- \sum_{j=2}^n \underbrace{(\kappa_{n,j}-\kappa_{n,j-1})}_{{}>0}|W^n|\le|F^n|
 \quad\Rightarrow\quad
|W^n|\le \kappa_{n,1}^{-1}\,|F^n|.
\eeq
Next, recalling \eqref{kappa_def_b}, and also using $(t_n-s)^{-\alpha}\ge t_n^{-\alpha}$ on $(0,t_1)$, %for $s<\frac12 t_1$,
one concludes that $\kappa_{n,1}\gtrsim t_n^{-\alpha}$.
%we get
%$$
%\kappa_{n,1}\simeq \tau_1^{-1}\int_0^{t_1}\!\!(t_n-s)^{-\alpha}ds\ge
%\tau_1^{-1}t_n^{-\alpha}\int_0^{t_1}\!\!ds
%%
%%\tau_1^{-1}(t_1/t_n)^\alpha\underbrace{\int_0^{t_1/2}\!\!\!\!(t_1-s)^{-\alpha}ds}_{{}\simeq t_1^{1-\alpha}}
%= t_n^{-\alpha}.
%%
%%\kappa_{n,1}\gtrsim \tau_1^{-1}\int_0^{t_1/2}\!\!\!\!(t_n-s)^{-\alpha}ds\gtrsim \tau_1^{-1}(t_1/t_n)^\alpha\underbrace{\int_0^{t_1/2}\!\!\!\!(t_1-s)^{-\alpha}ds}_{{}\simeq t_1^{1-\alpha}}
%%\simeq t_n^{-\alpha}.
%$$
So $|W^n|\lesssim t_n^\alpha\,|F^n|$, which immediately implies the desired assertion.
%\eqref{stability_a}.
%

{\color{blue}(ii) Let $W^0=0$ and $\delta_t^\alpha W^j= |F^j|$ for $j=1,\ldots,M$.
Then $0\le |V^m|\le W^m$
(as $\delta_t^\alpha$ is associated with an $M$-matrix), while $|W^m|\lesssim \max_{j=1,\ldots,m}\bigl\{t_j^{\alpha}\, |F^j|\bigr\}$
by the result of part (i). The desired assertion follows.
}
\end{proof}

To deal with uniform temporal meshes, we employ a more subtle stability result.

\renewcommand{\thelemmab}{{\ref{lem_main_stability}}${}^*$}
\begin{lemmab}[{\cite{gracia_etal_cmame}}]\label{lem_main_stability_star}
Let $r=1$ and $\tau:=TM^{-1}$.
Given $\gamma\in(0,\alpha]%\{\alpha, 1-\alpha\}
$, if $V^0=0$ and $|\delta_t^\alpha V^j|\lesssim \tau^\gamma t_j^{-\gamma-1}$ for $j=1,\ldots,M$, then
$|V^j|\lesssim t_j^{\alpha-1}$ for $j=1,\ldots, M$.
\end{lemmab}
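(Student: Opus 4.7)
The plan is to use the discrete comparison principle implied by the $M$-matrix structure of $\delta_t^\alpha$ (the same structure that drives Lemma~\ref{lem_main_stability}(ii)) together with an explicit barrier sequence of size $\lesssim t_j^{\alpha-1}$. Concretely, it suffices to exhibit a non-negative sequence $\{B^j\}$ with $B^0=0$, $B^j\lesssim t_j^{\alpha-1}$, and $\delta_t^\alpha B^j\ge C_0\,\tau^\gamma t_j^{-\gamma-1}$, where $C_0$ exceeds the constant hidden in the hypothesis $|\delta_t^\alpha V^j|\lesssim\tau^\gamma t_j^{-\gamma-1}$; the comparison principle then gives $|V^j|\le B^j\lesssim t_j^{\alpha-1}$.

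Motivated by the continuous identity $\mathrm{RL}_t^\alpha(t^{\alpha-1})=0$ (the Riemann--Liouville fractional derivative), I would try the augmented barrier
\[
B^j = C_1\, t_j^{\alpha-1} + C_2\,\tau^\gamma\, t_j^{\alpha-\gamma-1}\quad(j\ge 1),\qquad B^0 = 0,
\]
with positive constants $C_1,C_2$ to be chosen. Since $t_j\ge\tau$ for $j\ge 1$ gives $\tau^\gamma t_j^{\alpha-\gamma-1}=(\tau/t_j)^\gamma t_j^{\alpha-1}\le t_j^{\alpha-1}$, the bound $B^j\lesssim t_j^{\alpha-1}$ is automatic.

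The main technical step is the verification of the barrier inequality by direct computation. Expanding $\delta_t^\alpha B^m=\kappa_{m,1} B^1+\sum_{j=2}^m \kappa_{m,j}(B^j-B^{j-1})$ via \eqref{kappa_def_a}, I would Taylor-expand the $\kappa_{m,j}$ and the differences $B^j-B^{j-1}$ in powers of $\tau/t_m$ to isolate leading behaviours. For the $C_1 t_j^{\alpha-1}$ summand, the discrete analogue of the identity $\mathrm{RL}_t^\alpha(t^{\alpha-1})=0$ produces a near-cancellation between the positive initial-jump term and the later negative contributions, leaving a residual of order $\tau^\alpha t_m^{-\alpha-1}$, which suffices for the borderline case $\gamma=\alpha$. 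For the $C_2\,\tau^\gamma t_j^{\alpha-\gamma-1}$ summand, the positive initial-jump contribution $\kappa_{m,1}\tau^{\alpha-1}\sim \tau^{\alpha-1} t_m^{-\alpha}$ exceeds both the target $\tau^\gamma t_m^{-\gamma-1}$ and the bulk sum for $j\ge 2$ by a factor of order $(t_m/\tau)^{\gamma+1-\alpha}\ge 1$, supplying the required lower bound for $\gamma\in(0,\alpha)$. Choosing $C_1$ and $C_2$ large enough then closes the inequality uniformly in $m$ and $\gamma\in(0,\alpha]$.

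The chief obstacle is exactly this cancellation inherited from $\mathrm{RL}_t^\alpha(t^{\alpha-1})=0$: the naive candidate $B^j=K t_j^{\alpha-1}$ fails for $\gamma<\alpha$, because $\delta_t^\alpha B^m$ comes out only of size $\tau^\alpha t_m^{-\alpha-1}$, which is strictly smaller than the target $\tau^\gamma t_m^{-\gamma-1}$ when $\gamma<\alpha$. Unwinding the cancellation precisely enough, and controlling the negative bulk contributions against the positive initial-jump uniformly in $m$ and $\gamma\in(0,\alpha]$, is the heart of the argument; everything else reduces to the discrete comparison principle.
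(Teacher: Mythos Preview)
Your overall strategy via the discrete comparison principle is sound and matches the paper's, but the specific barrier you propose does not work, and the verification sketch contains a real error.

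For the piece $C_2\,\tau^\gamma t_j^{\alpha-\gamma-1}$ you assert that the initial-jump term $\kappa_{m,1}B^1\sim\tau^{\alpha-1}t_m^{-\alpha}$ dominates the bulk $\sum_{j\ge2}\kappa_{m,j}(B^j-B^{j-1})$ by a factor $(t_m/\tau)^{\gamma+1-\alpha}$. This is false: the bulk sum is of the \emph{same} order $\tau^{\alpha-1}t_m^{-\alpha}$. Indeed, approximating the sum by $\tau^\gamma\int_\tau^{t_m}(t_m-s)^{-\alpha}\,|\alpha-\gamma-1|\,s^{\alpha-\gamma-2}\,ds$, the dominant contribution comes from $s$ near $\tau$ (because $\alpha-\gamma-2<-1$) and is $\sim \tau^\gamma\cdot t_m^{-\alpha}\cdot\tau^{\alpha-\gamma-1}=\tau^{\alpha-1}t_m^{-\alpha}$. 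So you are back to the same near-cancellation you already flagged for the $t_j^{\alpha-1}$ piece, and the sign of the residual is not determined by your argument. Worse, the continuous analogue $\mathrm{RL}_t^\alpha(t^{\alpha-\gamma-1})=\dfrac{\Gamma(\alpha-\gamma)}{\Gamma(-\gamma)}\,t^{-\gamma-1}$ has a \emph{negative} coefficient for $\gamma\in(0,\alpha)$, which strongly suggests your second summand is not a barrier at all.

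The paper avoids this sign problem by a different construction. For $\gamma=\alpha$ it takes $B(s)=\min\{s\,t_p^{\alpha-2},\,s^{\alpha-1}\}$ with a free integer parameter $p\ge2$: the linear piece on $[0,t_p]$ lets one compute $D_t^\alpha B(t)$ explicitly and obtain a lower bound $\gtrsim p^{\alpha}\,\tau^{\alpha}t^{-\alpha-1}$, while the discretisation error $|\delta_t^\alpha B^m-D_t^\alpha B(t_m)|$ is shown to be $\lesssim p^{-(1-\alpha)}\,\tau^{\alpha}t_m^{-\alpha-1}$. Choosing $p$ large then forces the required sign without any delicate cancellation. For $\gamma\in(0,\alpha)$ the paper does not introduce a second power; instead it superposes dyadically rescaled copies $B_m(s)=\min\{s\,t_{p_m}^{\alpha-2},\,s^{\alpha-1}\}$ with $p_m=2^m p$ and weights $2^{-m\gamma}$, which yields $\delta_t^\alpha\bar B^j\gtrsim\tau^\gamma t_j^{-\gamma-1}$ while keeping $\bar B^j\lesssim t_j^{\alpha-1}$. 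The free parameter $p$ and the dyadic summation are precisely what replace your unproven cancellation step.
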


\begin{proof}
The desired assertion follows from \cite[Lemma~3]{gracia_etal_cmame} with $\beta=1+\gamma$; {\color{blue}see also \cite[Theorem~3.3]{JZ17} for a similar result}.
%For $\gamma=\alpha$,
We give an alternative (substantially shorter) proof in Appendix~\ref{app_B}.
\end{proof}

The next lemma will be useful when dealing with Ritz projections while estimating the errors of finite element discretizations in \S\ref{sec_FE}.

\begin{lemma}\label{lem second_stability}
Let $\{V^j\}_{j=0}^M\in \R^{M+1}$ and $\{\lambda^j\}_{j=1}^M\in \R^{M}$, and
$\bar\lambda=\bar\lambda(t)$ be a piecewise-constant left-continuous function defined by
$\bar\lambda(t)=\lambda^j$ for $t\in(t_{j-1},t_j]$, $j=1,\ldots,M$.
Then, with the  notation
$J^{1-\alpha}v(t):=\{\Gamma(1-\alpha)\}^{-1}\!\! \int_{0}^t(t-s)^{-\alpha} v( s)\, ds$,
%one has
\beq\label{rho_imply}
\delta_t^\alpha V^j \le J^{1-\alpha}\bar\lambda(t_j)\quad\forall\,j\ge1
\quad\;\;\Rightarrow\quad\;\;
V^m-V^0\le %\int_0^{t_m}\!\!\bar\psi(t)\,dt=
\sum_{j=1}^m\tau_j\, \lambda^j\quad\forall\,m\ge 0.%=0,\ldots,M.
\eeq
%for $m=0,\ldots,M$.
\end{lemma}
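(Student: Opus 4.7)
\emph{Proof plan.}
The plan is to reduce the claim to a discrete comparison principle for the operator $\delta_t^\alpha$, after subtracting off an explicit sequence whose discrete fractional derivative equals $J^{1-\alpha}\bar\lambda(t_j)$ exactly.

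First, I would introduce $S^j := \sum_{k=1}^j \tau_k \lambda^k$ (with $S^0 := 0$), so that the forward difference satisfies $\delta_t S^k = \lambda^k$ exactly. Substituting this into the definition \eqref{delta_def} of $\delta_t^\alpha$ yields
\[
\delta_t^\alpha S^j \;=\; \frac{1}{\Gamma(1-\alpha)} \sum_{k=1}^j \lambda^k \int_{t_{k-1}}^{t_k} (t_j-s)^{-\alpha}\,ds,
\]
and because $\bar\lambda(s) = \lambda^k$ on $(t_{k-1},t_k]$, the right-hand side rewrites as $J^{1-\alpha}\bar\lambda(t_j)$. Setting $W^j := V^j - V^0 - S^j$, the hypothesis becomes $\delta_t^\alpha W^j \le 0$ for $j \ge 1$ together with $W^0 = 0$, and the target inequality \eqref{rho_imply} becomes $W^m \le 0$ for all $m \ge 0$ (the case $m = 0$ being trivial since both sides are zero).

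Next, I would establish the required discrete comparison principle from \eqref{kappa_def_a}. Using $\kappa_{m,0}=0$ and the telescoping identity $\sum_{j=1}^m(\kappa_{m,j}-\kappa_{m,j-1}) = \kappa_{m,m}$, one can recast
\[
\delta_t^\alpha W^m \;=\; \sum_{j=1}^m (\kappa_{m,j}-\kappa_{m,j-1})\,(W^m - W^{j-1}).
\]
Let $m \in \{1,\ldots,M\}$ be the smallest index at which the maximum $\max_{0\le k \le M} W^k$ is attained. If that maximum were strictly positive, then every factor $W^m - W^{j-1}$ for $j\le m$ would be nonnegative, while the $j=1$ term alone would contribute $\kappa_{m,1} W^m > 0$ (recall $\kappa_{m,1} > 0$ from just below \eqref{kappa_def_b}); this contradicts $\delta_t^\alpha W^m \le 0$. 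Hence the maximum is nonpositive, i.e.\ $W^k \le 0$ for every $k$, which is the desired conclusion.

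The main obstacle is mostly bookkeeping: matching the piecewise-constant function $\bar\lambda$ in the convolution $J^{1-\alpha}\bar\lambda(t_j)$ with the way $\delta_t^\alpha$ treats the piecewise-linear sequence $\{S^j\}$ through \eqref{delta_def}, so as to secure the identity $\delta_t^\alpha S^j = J^{1-\alpha}\bar\lambda(t_j)$ as an equality rather than only an approximation. Once this identity is available, the comparison step is a one-line consequence of the sign structure of the coefficients $\kappa_{m,j} - \kappa_{m,j-1} > 0$ already recorded in \eqref{kappa_def}.
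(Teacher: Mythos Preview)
Your proposal is correct and follows essentially the same approach as the paper: both introduce the auxiliary sequence $\Lambda^j=V^0+\sum_{k=1}^j\tau_k\lambda^k$ (your $V^0+S^j$), verify the exact identity $\delta_t^\alpha\Lambda^j=J^{1-\alpha}\bar\lambda(t_j)$, and conclude by a discrete comparison principle. The only cosmetic difference is that the paper phrases the comparison step as inverse-monotonicity of the $(M{+}1)\times(M{+}1)$ matrix associated with $\delta_t^\alpha$ (an $M$-matrix), whereas you spell it out as a maximum-principle argument; these are two presentations of the same fact.
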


\begin{proof}
Let $\Lambda^j:=V^0+\int_0^{t_{j}}\bar\lambda\,dt$ so that $\lambda^j=\delta_t\Lambda^j$.
Now, $J^{1-\alpha}\bar\lambda(t_j)=\delta_t^\alpha\Lambda^j$,
so we get $M$  %antecedent
equations %in \eqref{rho_imply} become
$\delta_t^\alpha V^j \le \delta^\alpha_t\Lambda^j$ for $j=1,\ldots,M$.
Augmenting these equations by $V^0=\Lambda^0$,
we get the matrix relation $A\vec{V}\le A\vec{\Lambda}$
for the column vectors $\vec{V}:=\{V^j\}_{j=0}^M$ and $\vec{\Lambda}:=\{\Lambda^j\}_{j=0}^M$
with an inverse-monotone $(M+1)\times (M+1)$ matrix $A$. (The latter follows from $A$ being diagonally dominant, with the entries $A_{ij}\le 0$ for $i\neq j$
in view of \eqref{kappa_def_a}.)
Consequently, $\vec{V}\le \vec{\Lambda}$, which immediately yields the desired assertion.
\end{proof}

\subsection{Error estimation for a simplest example\! (without spatial derivatives)}
It is convenient to illustrate our approach to the estimation of the temporal-discretization error using a very simple example.
Consider a fractional-derivative problem without spatial derivatives together with its discretization:
\begin{subequations}\label{simplest}
\begin{align}
D_t^\alpha u(t)&=f(t)&&\hspace{-1.6cm}\mbox{for}\;\;t\in(0,T],&&\hspace{-0.8cm} u(0)=u_0,
\\ \delta_t^\alpha U^j&=f(t_j)&&\hspace{-1.6cm}\mbox{for}\;\;j=1,\ldots,M,&&\hspace{-0.8cm} U^0=u_0.
\end{align}
\end{subequations}
Throughout this subsection, with slight abuse of notation, $\pt_t $ will be used for $\frac{d}{dt}$, while
$\delta_tu(t_j):=\tau_j^{-1}[u(t_j)-u(t_{j-1})]$ (similarly to $\delta_t$ in \eqref{delta_def}).

\begin{lemma}\label{lem_simplest}
Let  $\{t_j=T(j/M)^r\}_{j=0}^M$ for some $r\ge1$.
Then for $u$ and $U^j$ that satisfy \eqref{simplest}, one has
$$
|u(t_m)-U^m|
%\lesssim(\tau_1/t_m)\,\psi^1+ \max_{j=2,\ldots,m}\psi^j\,
\lesssim
\max_{j=1,\ldots,m}\psi^j,
$$
where $m=1,\ldots,M$, and
\begin{subequations}\label{psi_def}
\begin{align}\label{psi_1_def}
\psi^1&:=\tau_1^\alpha\sup_{s\in(0,t_1)}\!\!\bigl(s^{1-\alpha}|\delta_tu(t_1)-\pt_s u(s)|\bigr),
\\\label{psi_j_def}
\psi^j&:=\tau_j^{2-\alpha} \,t_j^\alpha \sup_{s\in(t_{j-1},t_j)}\!\!\!|\pt_s^2u(s)|\qquad\mbox{for}\;\;j\ge 2.
\end{align}
\end{subequations}

\end{lemma}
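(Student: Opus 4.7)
My plan is to combine a stability reduction with a careful analysis of the temporal truncation error. Setting $e^j := u(t_j) - U^j$, I note that $e^0 = 0$ and $\delta_t^\alpha e^j = \delta_t^\alpha u(t_j) - f(t_j) = \delta_t^\alpha u(t_j) - D_t^\alpha u(t_j) =: T^j$. Applying Lemma~\ref{lem_main_stability}(i) then gives $|e^m| \lesssim \max_{j \le m}\{t_j^\alpha |T^j|\}$, so it suffices to prove $t_j^\alpha |T^j| \lesssim \max_{i \le j}\psi^i$ for each $j$.

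Next I rewrite $T^j$ in a convenient form. Let $Iu$ denote the continuous piecewise-linear interpolant of $u$ on the mesh; then $(Iu)'(s) = \delta_t u(t_i)$ for $s \in (t_{i-1}, t_i)$, and hence $\delta_t^\alpha u(t_j) = D_t^\alpha(Iu)(t_j)$, so
\[
T^j = \frac{1}{\Gamma(1-\alpha)}\int_0^{t_j}(t_j - s)^{-\alpha}(Iu - u)'(s)\,ds.
\]
I split this integral at $s = t_{j-1}$. On the final subinterval $(t_{j-1}, t_j)$ I use a direct estimate: for $j = 1$ the definition of $\psi^1$ gives $|\delta_t u(t_1) - \partial_s u(s)| \le \psi^1/(\tau_1^\alpha s^{1-\alpha})$, which combined with $\int_0^{t_1}(t_1 - s)^{-\alpha} s^{\alpha - 1}\,ds = B(\alpha, 1-\alpha)$ yields a contribution $\lesssim \psi^1/t_1^\alpha$; for $j \ge 2$ I use $|(Iu - u)'(s)| \le \tau_j \Phi_j$, with $\Phi_i := \sup_{(t_{i-1}, t_i)}|\partial_s^2 u|$, to obtain a contribution $\lesssim \psi^j/t_j^\alpha$.

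On $(0, t_{j-1})$ (empty for $j = 1$) I integrate by parts. The key point is that $(Iu - u)(s)$ vanishes at every mesh node, so both boundary terms at $s = 0$ and $s = t_{j-1}$ are zero, producing
\[
\frac{\alpha}{\Gamma(1-\alpha)}\int_0^{t_{j-1}}(t_j - s)^{-\alpha - 1}(u - Iu)(s)\,ds.
\]
I bound $|u - Iu|(s) \le \tau_i^2 \Phi_i/8 = \psi^i \tau_i^\alpha/(8\,t_i^\alpha)$ on $(t_{i-1}, t_i)$ for $i \ge 2$ by the standard linear-interpolation estimate, while on $(0, t_1)$ I integrate the pointwise bound used above to obtain $|u - Iu|(s) \le \psi^1 s^\alpha/(\alpha \tau_1^\alpha)$. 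The $i = 1$ contribution to $t_j^\alpha |T^j|$ is then $\lesssim \psi^1$, using that $t_j \ge 2 t_1$ for $j \ge 2$ keeps $(t_j - s)^{-\alpha - 1}$ away from its singularity on this subinterval.

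The principal technical obstacle is the remaining sum for $i \ge 2$: after pulling out $\max_{2 \le i \le j-1}\psi^i$, I need
\[
t_j^\alpha \sum_{i=2}^{j-1}\frac{\tau_i^{\alpha+1}}{t_i^\alpha}(t_j - t_i)^{-\alpha - 1} \lesssim 1
\]
uniformly in $j$ and $M$. I split the range at $i = j/2$. For $j/2 \le i \le j - 1$, the graded-mesh relations $\tau_i \simeq \tau_j$, $t_i \simeq t_j$, and $t_j - t_i \simeq (j - i)\tau_j$ collapse each term to $(j - i)^{-\alpha - 1}$, whose partial sums are bounded because $\alpha > 0$. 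For $2 \le i < j/2$, the bounds $t_j - t_i \simeq t_j$ and $\tau_i/t_i \simeq r/i$ (from \eqref{t_grid}) reduce the contribution to a series that sums to $\lesssim j^{-\alpha} \lesssim 1$. Assembling all contributions yields $t_j^\alpha |T^j| \lesssim \max_{i \le j}\psi^i$ and the lemma follows.
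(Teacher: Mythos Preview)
Your proof is correct and shares the paper's overall strategy---express the truncation error through the piecewise-linear interpolant, integrate by parts, and invoke Lemma~\ref{lem_main_stability}---but the execution differs in two places. First, you split off the final subinterval $(t_{j-1},t_j)$ and estimate it directly, whereas the paper integrates by parts on all of $(0,t_m)$, using the sharper interpolation bound $|\chi(s)|\le\tau_j(t_j-s)\sup|\partial_s^2 u|$ (which vanishes at $s=t_j$) to push the integration by parts through the singular endpoint. Second, you bound the resulting discrete sum by a dyadic split at $i=j/2$, while the paper absorbs the $j$-dependence into the factor $\nu_{m,j}\simeq 1$ and collapses everything into a single integral ${\mathcal J}^m$, estimated via the substitution $\hat s=s/t_m$. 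Your route is slightly more elementary and avoids the delicate endpoint justification; the paper's integral representation is more systematic and is reused almost verbatim (replacing ${\mathcal J}^m$ by $\widetilde{\mathcal J}^m$) to obtain the sharper uniform-mesh bound of Lemma~\ref{lem_simplest_star}.
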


\begin{proof}
{\color{blue}Using the standard piecewise-linear Lagrange interpolant $u^I$ of $u$, let}
%We shall employ the function $\chi\in C[0,T]$ with the property $\chi(t_j)=0$ for $j=0,\ldots, M$, defined on each $[t_{j-1},t_j]$ by
$$
%\chi(s):=\int_s^{t_j}[\delta_t u(t_j)-\pt_\zeta u(\zeta)]\,d\zeta
\chi:=u-u^I
\quad\Rightarrow\quad
%\chi(t_{j-1})=\chi(t_j)=0,\quad
|\chi(s)|\le \underbrace{\tau_j(t_j-s)}_{{}\le \tau_j^2}\,\sup_{s\in(t_{j-1},t_j)}\!\!\!|\pt_s^2u|%.
\quad\mbox{for~}s\in[t_{j-1},t_j].
$$
{\color{blue}As $\chi$ will appear in the truncation error, it is useful
to  note that, in view of \eqref{psi_j_def},}~
\begin{subequations}\label{chi_bounds}
\beq\label{chi_bounds_j2}
|\chi(s)|\le \psi^j\, \tau_j^\alpha \,t_j^{-\alpha}\min\{1,{\color{blue}(t_j-s)/\tau_j}\}\quad\;\mbox{for}\;\; s\in(t_{j-1},t_j),\;j\ge2.
\eeq
On $(0,t_1)$, %the estimation of $\chi$ is more delicate.
one has $\chi'(s)=\pt_s u(s)-\delta_tu(t_1)$, which,
combined with \eqref{psi_1_def}, %with  the definition of $\chi$
%and the observation that $\chi(s)=\chi(s)-\chi(0)=-\int^s_{0}[\delta_t u(t_j)-\pt_\zeta u(\zeta)]\,d\zeta$
yields
\beq
|\chi(s)|\le \psi^1\,\tau_1^{-\alpha} %\min\Bigl\{
%\underbrace{\int^s_{0}\!\!\zeta^{\alpha-1}\,d\zeta}_{{}\lesssim s^\alpha} ,
\underbrace{
\int_s^{t_1}\!\!\!\zeta^{\alpha-1}\,d\zeta}_{{}\lesssim s^{\alpha-1}(t_1-s)}
%\Bigr\}
\lesssim
\psi^1\, \tau_1^{-\alpha}\, s^{\alpha-1}\,(t_1-s)
%\min\{s,t_1-s\}
\quad\;\mbox{for}\;\; s\in(0,t_1).
\eeq
%for $s\in(0,t_1)$.
\end{subequations}

We now proceed to estimating
the error $e^j:=u(t_j)-U^j$, for which \eqref{simplest} implies
\beq\label{err_simplest_prob}
\delta_t^\alpha e^j=\underbrace{\delta_t^\alpha u(t_j)-D_t^\alpha u(t_j)}_{{}=:r^j}\quad\mbox{for}\;\;j=1,\ldots,M,
\qquad e^0=0.
\eeq
For $r^m$, recalling the definitions \eqref{CaputoEquiv} and \eqref{delta_def} of $D^\alpha_t$ and $\delta_t^\alpha$, we arrive at
$$
\Gamma(1-\alpha)\,r^m\!=\!\sum_{j=1}^m\!\int_{t_{j-1}}^{t_j}\!\!\!\!(t_m-s)^{-\alpha}\!\underbrace{[\delta_t u(t_j)-\pt_s u(s)]}_{{}=-\chi'(s)}ds
=\alpha\sum_{j=1}^m\!\int_{t_{j-1}}^{t_j}\!\!\!\!(t_m-s)^{-\alpha-1}\chi(s)\,ds.
$$
%where, for the second relation, we used integration by parts.
(In particular, for the interval $(t_{m-1},t_m)$, to check the validity of the above integration by parts,
with $\epsilon\rightarrow 0^+$,
one can integrate by parts over $(t_{m-1},t_m-\epsilon)$.)

Next, combining the above representation of $r^m$ with the bounds \eqref{chi_bounds} on $\chi$,
{\color{blue} we claim that}
%one can check that
\beq\label{r_m_simplest}
|r^m|\lesssim\mathring{\mathcal J}^m\,(\tau_1/t_m)\,\psi^1+
{\mathcal J}^m\max_{j=2,\ldots,m}\{\nu_{m,j}\,\psi^j\},
\eeq
where
%(with the use, when deriving ${\mathcal J}^m$, of $1\le t_j^{\alpha/r}s^{-\alpha/r}$ for $s\in(t_{j-1},t_j)$)
\begin{align*}
\mathring{\mathcal J}^m&:=\tau_1^{-\alpha}\,(t_m/\tau_1)\int_0^{t_1}\!\!s^{\alpha-1}
(t_1-s)\,%\min\{s,\,t_1-s\}
(t_m-s)^{-\alpha-1}
%\,
ds ,
%\qquad\nu^j:=(\tau_j/\tau_m)^\alpha(t_j/t_m)^{-\alpha(1-1/r)} \lesssim 1
\\
{\mathcal J}^m&:=\tau_m^\alpha\,\, t_m^{-\alpha(1-1/r)}\int_{{\color{blue}t_1}}^{t_m}\!\!s^{-\alpha/r}\,(t_m-s)^{-\alpha-1}\,\min\{1,(t_m-s)/\tau_m\}\,ds,
\\[0.2cm]
\nu_{m,j}&:=(\tau_j/\tau_m)^\alpha\,(t_j/t_m)^{-\alpha(1-1/r)} \simeq 1. %\quad\mbox{for}\;\;j=2,\ldots, m
\end{align*}
Here, the  bound on $\nu_{m,j}$ follows from $\tau_j/\tau_m\simeq (t_j/t_m)^{1-1/r}$ (in view of \eqref{t_grid}).
{\color{blue}To check the bound \eqref{r_m_simplest}, note that the two terms in its right-hand side are respectively associated with $\int_0^{t_1}$ and $\sum_{j=2}^m\int_{t_{j-1}}^{t_j}$
in $r^m$. Note also that for $j=2,\ldots, m-1$, it is convenient to use a version of \eqref{chi_bounds_j2} with
$\min\{1,(t_j-s)/\tau_j\}\le 1$ replaced by $\min\{1,(t_m-s)/\tau_m\}\ge 1$.
So a calculation using \eqref{chi_bounds_j2} and the definition of $\nu_{m,j}$ implies for $j= 2,\ldots, m$ that
$$
|\chi(s)|\le \{\nu_{m,j}\,\psi^j\}\,\tau_m^\alpha \,t_m^{-\alpha(1-1/r)}\,\underbrace{t_j^{-\alpha/r}}_{\lesssim s^{-\alpha/r}}\, \min\{1,(t_m-s)/\tau_m\}\quad\;\mbox{for}\;\; s\in(t_{j-1},t_j).%,\;j\ge2
$$
This observation leads to the definition of ${\mathcal J}^m$ in \eqref{r_m_simplest}.}

For $\mathring{\mathcal J}^m$, the observation that $(t_1-s)/(t_m-s)\le t_1/t_m$ for $s\in(0,t_1)$ implies
$$%\begin{align*}
\mathring{\mathcal J}^m
%:=\tau_1^{-\alpha}\int_0^{t_1}\!\!s^{\alpha-1}(t_m-s)^\alpha ds
\le
t_m^{-\alpha}\int_0^{t_1}\!\!s^{\alpha-1}(t_1-s)^{-\alpha}\, ds
%\\
=t_m^{-\alpha}\int_0^{1}\!\hat s^{\alpha-1}(1-\hat s)^{-\alpha}%\min\{\hat s,\,1-\hat s\}
\, d\hat s
\lesssim t_m^{-\alpha},
$$%\end{align*}
where $\hat s:=s/t_1$.
For  ${\mathcal J}^m$, it is helpful to employ another substitution $\hat s:=s/t_m$ and $\hat\tau_j:=\tau_j/t_m$, so,
for $m\ge2$, one gets
$$
{\mathcal J}^m=t_m^{-\alpha}\, \hat\tau_m^\alpha\,\underbrace{  \int_{{\color{blue}\hat\tau_1}}^{1}\!\!\hat s^{-\alpha/r}(1-\hat s)^{-\alpha-1}\min\{1,(1-\hat s)/\hat\tau_m\}\,d\hat s}_{{}\lesssim \hat\tau_m^{-\alpha}}
\lesssim t_m^{-\alpha}.
$$
{\color{blue}Here, when bounding the integral, it is convenient to replace the lower limit $\hat\tau_1$ by~$0$,
and then consider the intervals $(0,2^{-r} )$, $(2^{-r},1-\hat\tau_m)$ and $(1-\hat\tau_m,1)$ separately (in view of $1-\hat\tau_m\ge 2^{-r}$).
On these intervals, the integrand is respectively $\lesssim \hat s^{-\alpha/r}$, $\lesssim(1-\hat s)^{-\alpha-1}$ and $\lesssim(1-\hat s)^{-\alpha}/\hat\tau_m$, so the corresponding integrals
are respectively $\lesssim 1$ (in view of $\alpha/r\in(0,1)$), $\lesssim\hat\tau_m^{-\alpha}$ and $\lesssim\hat\tau_m^{-\alpha}$.
So the above bound on ${\mathcal J}^m$ is indeed true.}
%\\
%Here we also used $1-\hat\tau_m\ge 2^{-r}$ combined with $\alpha/r\in(0,1)$
%(so one may consider the intervals $(0,2^{-r} )$, $(2^{-r},1-\hat\tau_m)$ and $(1-\hat\tau_m,1)$ separately).

Finally, we combine \eqref{r_m_simplest} with the above bounds on $\mathring{\mathcal J}^m$ and ${\mathcal J}^m$, and arrive at
\beq\label{r_m_simplest_main}
|r^m|\lesssim t_m^{-\alpha}\bigl\{(\tau_1/t_m)\psi^1+\max_{j=2,\ldots,m}\psi^j\bigr\},
\eeq
while $|\delta_t^\alpha e^m|=|r^m|
%\lesssim t_m^{-\alpha}[(\tau_1/t_m)\psi^1+\max_{j=2,\ldots,m}\psi^j]
$.
As $\tau_1/t_m\le 1$, the desired assertion
follows by an application of Lemma~\ref{lem_main_stability}.
\end{proof}

\begin{corollary}\label{cor_simplest}
Under the conditions of Lemma~\ref{lem_simplest}, suppose $|\pt_t^l u(t)|\lesssim 1+t^{\alpha-l}$ for $l=1,2$ and $t\in(0,T]$.
Then $|u(t_m)-U^m|\lesssim M^{-\min\{\alpha r,2-\alpha\}}$ for $m=1,\ldots,M$.
\end{corollary}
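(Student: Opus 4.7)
The plan is to apply Lemma~\ref{lem_simplest} directly, reducing the task to bounding $\max_{j\le m}\psi^j$ with $\psi^j$ as in \eqref{psi_def}. The hypotheses $|\pt_t^l u(t)|\lesssim 1+t^{\alpha-l}$ for $l=1,2$ will be used to control the first- and second-derivative quantities in \eqref{psi_1_def} and \eqref{psi_j_def} respectively, exploiting that $t\le T$ allows the ``$1$'' to be absorbed into the singular term. I would treat $j=1$ and $j\ge 2$ separately, since they rely on different derivative bounds.

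For $j=1$, from $|\pt_s u(s)|\lesssim s^{\alpha-1}$ on $(0,t_1)$ one gets $s^{1-\alpha}|\pt_s u(s)|\lesssim 1$; moreover $|\delta_t u(t_1)|\le \tau_1^{-1}\int_0^{t_1}|\pt_s u|\,ds\lesssim t_1^{\alpha-1}$, hence $s^{1-\alpha}|\delta_t u(t_1)|\lesssim (s/t_1)^{1-\alpha}\lesssim 1$ on the same interval. Combining these in \eqref{psi_1_def} and using $\tau_1=t_1\simeq M^{-r}$ from \eqref{t_grid}, I obtain $\psi^1\lesssim \tau_1^\alpha\simeq M^{-r\alpha}$.

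For $j\ge 2$, the graded-mesh property $t_j/t_{j-1}\le 2^r$ gives $t_{j-1}\simeq t_j$, so from $|\pt_s^2 u(s)|\lesssim s^{\alpha-2}$ on the bounded interval $[0,T]$ one concludes $\sup_{s\in(t_{j-1},t_j)}|\pt_s^2 u(s)|\lesssim t_j^{\alpha-2}$. Inserting this and $\tau_j\simeq M^{-1}t_j^{1-1/r}$ into \eqref{psi_j_def} yields
$$
\psi^j\lesssim \tau_j^{2-\alpha}\,t_j^{2\alpha-2}\simeq M^{-(2-\alpha)}\,t_j^{\theta},\qquad \theta:=\alpha-(2-\alpha)/r.
$$

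A case analysis on the sign of $\theta$ finishes the proof. If $\alpha r\ge 2-\alpha$, then $\theta\ge 0$ and $t_j^\theta\lesssim T^\theta\lesssim 1$, so $\psi^j\lesssim M^{-(2-\alpha)}$. If $\alpha r<2-\alpha$, then $\theta<0$ and the maximum is attained at the smallest admissible node $t_2\simeq M^{-r}$, giving $\psi^j\lesssim M^{-(2-\alpha)-r\theta}=M^{-r\alpha}$. Either way, $\max_{j\ge 2}\psi^j\lesssim M^{-\min\{\alpha r,\,2-\alpha\}}$, and the same upper bound applies to $\psi^1\lesssim M^{-r\alpha}$. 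Lemma~\ref{lem_simplest} then delivers the claim. No real obstacle is anticipated; the only subtlety is the sign-of-$\theta$ dichotomy, which is precisely what produces the $\min\{\alpha r,\,2-\alpha\}$ in the convergence rate.
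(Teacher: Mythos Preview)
Your proof is correct and follows essentially the same approach as the paper: bound $\psi^1$ via $|\pt_s u|\lesssim s^{\alpha-1}$ and $\psi^j$ for $j\ge2$ via $|\pt_s^2 u|\lesssim t_j^{\alpha-2}$, then combine with the graded-mesh relation~\eqref{t_grid}. The only cosmetic difference is that the paper avoids the explicit sign-of-$\theta$ case split by setting $\gamma:=\min\{\alpha r,2-\alpha\}$ and using $(\tau_j/t_j)^{2-\alpha}\le(\tau_j/t_j)^{\gamma}$ together with $t_j^{\alpha-\gamma/r}\lesssim 1$; your dichotomy achieves the same bound by an equivalent route.
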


\begin{proof}
It suffices to show that $\psi^j\lesssim M^{-\min\{\alpha r,2-\alpha\}}$ for $j\ge 1$.
As $t\le T$, we have $|\pt_t^l u(t)|\lesssim t^{\alpha-l}$.
For $\psi^1$ of \eqref{psi_1_def}, note that $s^{1-\alpha}|\delta_tu(t_1)|\lesssim \tau_1^{-\alpha}\int_0^{\tau_1}s^{\alpha-1}\simeq 1$,
while $s^{1-\alpha}|\pt_su(s)|\lesssim 1$, so
$\psi^1\lesssim \tau_1^{\alpha}\simeq M^{-\alpha r}$.
For any other $\psi^j$, defined in \eqref{psi_j_def}, in view of $t_{j-1}\ge 2^{-r}t_j$, one gets
$|\pt^2_s u(s)|\lesssim t_j^{\alpha-2}$ for $s\in(t_{j-1},t_j)$, so
$\psi^j\lesssim (\tau_j/t_j)^{2-\alpha}t_j^{\alpha}$.
Now, set $\gamma:=\min\{\alpha r,2-\alpha\}$.
Then
$(\tau_j/t_j)^{2-\alpha}\le (\tau_j/t_j)^{\gamma}\lesssim M^{-\gamma} t_j^{-\gamma/r}$,
by~\eqref{t_grid}.
Combining this with $ t_j^{\alpha-\gamma/r}\lesssim 1$ yields $\psi^j\lesssim M^{-\gamma}=M^{-\min\{\alpha r,2-\alpha\}}$ for $j\ge2$.
\end{proof}

\begin{remark}[Optimal mesh grading $r$]\label{rem_r_optimal}
The optimal error bound $O(M^{-(2-\alpha)})$ in Corollary~\ref{cor_simplest} is attained when $r= (2-\alpha)/\alpha$.
For any larger $r$, one also enjoys the optimal
rate of convergence; however,  increased temporal mesh widths near $t=T$ (for example, $\tau_M\approx rTM^{-1}$) lead to larger errors.
See also \cite[Remark~5.6]{stynes_etal_sinum17}.
\end{remark}

\subsection{Analysis on the uniform mesh.}
Let us now consider the case of a uniform temporal mesh (i.e. $r=1$).
If $u$ is smooth on $[0,T]$ in the sense that $|\pt^l_t u|\lesssim 1$ for $l=1,2$, then an application of Lemma~\ref{lem_simplest} immediately yields for  the error to be${}\lesssim M^{-(2-\alpha)}$. However,
we are interested in a more realistic case of $u$ being singular at $t=0$.
%
%{\color{blue}This case was addressed in a recent paper [...]. The purpose of this section is
%to show that a much simpler approach presented in ... with a mild modification, yields essentially same results...}

We start with a shaper version of Lemma~\ref{lem_simplest}.

\renewcommand{\thelemmaa}{\ref{lem_simplest}${}^*$}
\begin{lemmaa}\label{lem_simplest_star}
%\begin{lemma}[Version of Lemma ...]
Under the conditions of Lemma~\ref{lem_simplest},
let $r=1$ and $\tau:=TM^{-1}$, and set $\gamma=\min\{\alpha,1-\alpha\}$. Then
$$
|u(t_m)-U^m|%\lesssim (\tau_1/t_m)\,\psi^1+\!\!\max_{j=2,\ldots,m}\Bigl\{(t_j/t_m)^{1-\alpha/r}\,\psi^j\bigr\}
\lesssim t_m^{\alpha-1}
\max_{j=1,\ldots,m}\bigl\{\tau^{-\gamma}\,t_j^{1-\alpha+\gamma}\,\psi^j\bigr\}.
$$
\end{lemmaa}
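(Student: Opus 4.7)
The plan is to combine the truncation-error computation from the proof of Lemma~\ref{lem_simplest} with the sharper stability estimate Lemma~\ref{lem_main_stability_star}. Abbreviate $\Psi := \max_{j\le m}\{\tau^{-\gamma} t_j^{1-\alpha+\gamma}\psi^j\}$, so that $\psi^j \le \Psi\,\tau^\gamma\, t_j^{-(1-\alpha+\gamma)}$ for every $j \le m$. The goal reduces to establishing the sharper pointwise truncation bound
\[
|r^m| \lesssim \Psi\,\tau^\gamma\, t_m^{-(\gamma+1)} \qquad \text{for } m=1,\ldots,M,
\]
where $r^j := \delta_t^\alpha e^j$ and $e^j := u(t_j) - U^j$; once this is in hand, Lemma~\ref{lem_main_stability_star}, applied by linearity to $e^j/\Psi$, immediately yields $|e^m| \lesssim \Psi\, t_m^{\alpha-1}$, as desired.

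To derive the refined truncation bound I would re-use the integral identity $\Gamma(1-\alpha)\,r^m = \alpha \sum_{j=1}^m \int_{t_{j-1}}^{t_j} (t_m-s)^{-\alpha-1}\chi(s)\,ds$ from the proof of Lemma~\ref{lem_simplest}, together with the bounds~\eqref{chi_bounds} on $\chi = u - u^I$, now with the refined substitution $\psi^j \le \Psi\,\tau^\gamma t_j^{-(1-\alpha+\gamma)}$. On the uniform mesh $t_j = j\tau$, \eqref{chi_bounds_j2} then becomes $|\chi(s)| \lesssim \Psi\,\tau^{\alpha-1}\, j^{-(1+\gamma)}\,\min\{1,(t_j-s)/\tau\}$ for $j \ge 2$, with an analogous rewriting on the $j=1$ block. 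Each resulting block integral $I_j := \int_{t_{j-1}}^{t_j}(t_m-s)^{-\alpha-1}\min\{1,(t_j-s)/\tau\}\,ds$ I would bound by $\tau^{-\alpha}(m-j)^{-\alpha-1}$ when $j < m$ (via $t_m - s \ge (m-j)\tau$) and by $\tau^{-\alpha}$ when $j = m$ (by direct integration of $(t_m-s)^{-\alpha}$).

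The heart of the argument is then the weighted sum $\sum_{j=2}^m j^{-(1+\gamma)}\, I_j$, which I would split at $j \approx m/2$: on the lower half $(m-j)^{-\alpha-1} \lesssim m^{-\alpha-1}$, with the residual series summable in $j$ because $\gamma > 0$; on the upper half $j^{-(1+\gamma)} \lesssim m^{-(1+\gamma)}$, with the residual series summable in $m-j$ because $\alpha > 0$. Using $\gamma \le \alpha$, the two halves combine to yield $\lesssim \tau^{-\alpha}\, m^{-(1+\gamma)}$; multiplied by the prefactor $\Psi\,\tau^{\alpha-1}$ and rewritten via $\tau^{-1}m^{-(1+\gamma)} = \tau^\gamma t_m^{-(1+\gamma)}$, this part of $r^m$ matches the target. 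The $j=1$ endpoint is handled separately: when $m \ge 2$, the estimate $t_m - s \gtrsim t_m$ on $(0,\tau)$ gives a contribution $\lesssim \Psi\,\tau^\alpha t_m^{-\alpha-1} \lesssim \Psi\,\tau^\gamma t_m^{-(1+\gamma)}$ (since $\alpha \ge \gamma$ and $\tau \le t_m$), while $m = 1$ reduces to a direct Beta-function computation giving $|r^1| \lesssim \Psi\,\tau^{-1}$, again matching the target. The main obstacle is precisely this split-endpoint bookkeeping, and the verification that the $j=1$ block (the only one not fitting the generic pattern) does not dominate; the condition $\gamma \le \alpha$, i.e.\ the admissibility hypothesis of Lemma~\ref{lem_main_stability_star}, is exactly what makes the balancing work.
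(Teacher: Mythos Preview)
Your argument is correct and follows the same plan as the paper's proof: sharpen the truncation bound from Lemma~\ref{lem_simplest} to $|r^m|\lesssim \tau^\gamma t_m^{-\gamma-1}\Psi$ and then invoke Lemma~\ref{lem_main_stability_star}. The only packaging difference is that the paper keeps the estimate in integral form---replacing $\mathcal J^m$ by a modified $\widetilde{\mathcal J}^m$ with an extra factor $s^{-1}$ in the integrand, bounding it by $t_m^{-\alpha}$ via the same midpoint split, and first obtaining $|r^m|\lesssim t_m^{-\alpha-1}\max_j\{t_j\psi^j\}$ before relaxing to general $\gamma\le\alpha$---whereas you substitute $\psi^j\le\Psi\,\tau^\gamma t_j^{-(1-\alpha+\gamma)}$ at the outset and carry out the same split as a discrete sum.
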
\vspace{-0.5cm}
\begin{proof}
An inspection of the proof of Lemma~\ref{lem_simplest} shows that one can replace
the term ${\mathcal J}^m\max_{j=2,\ldots,m}(\nu_{m,j}\,\psi^j)$ in \eqref{r_m_simplest} (where recall that $\nu_{m,j}\sim1$) by
\beq\label{widetilde_J}
\widetilde{\mathcal J}^m\max_{j=2,\ldots,m}\bigl\{(t_j/t_m)%^{1-\alpha/r}
\,\psi^j\bigr\}
%\color{blue}\lesssim  t_m^{-1-\gamma}\max_{j=2,\ldots,m}\Bigl\{t_j^{1-\alpha+\gamma}\,\psi^j\Bigr\}
,
\eeq
%
%$$
%{\mathcal J}^m:=\tau_m^\alpha\,\, t_m^{-\alpha(1-1/r)}\int_0^{t_m}\!\!s^{-\alpha/r}\,(t_m-s)^{-\alpha-1}\,\min\{1,(t_m-s)/\tau_m\}\,ds,
%$$
%
where (with the use of $t_j^{-1}\le s^{-1}$ for $s\in(t_{j-1},t_j)$)
$$
\widetilde{\mathcal J}^m:=\tau_m^\alpha\, t_m^{-\alpha(1-1/r)+\fbox{\scriptsize$1$}}\int_{t_1}^{t_m}\!
s^{-\alpha/r-\fbox{\scriptsize$1$}}
\,(t_m-s)^{-\alpha-1}\,\min\{1,(t_m-s)/\tau_m\}\,ds.
$$
%
%$$
%\widetilde{\mathcal J}^m:=\tau_m^\alpha\, %t_m^{-\alpha(1-1/r)+\fbox{\scriptsize$1-\alpha/r$}}\int_{t_1}^{t_m}\!\fbox{$s^{-1}$}\,(t_m-s)^{-\alpha-1}\,\min\{1,(t_m-s)/\tau_m\}\,ds.
%$$
Here, for convenience, the terms that differ from ${\mathcal J}^m$ are framed.

{\color{blue}Next, set $r=1$ and $\tau_j=\tau$.}
%
%Now, to complete the proof, it suffices to show that
We claim that
$\widetilde{\mathcal J}^m\lesssim t_m^{-\alpha}$ for $m\ge2$.
Indeed, imitating the estimation of ${\mathcal J}^m$ in the proof of Lemma~\ref{lem_simplest},
we employ the substitution $\hat s:=s/t_m$ and the notation
{\color{blue}$\hat\tau:=\tau/t_m$ to get
$$
\widetilde{\mathcal J}^m=t_m^{-\alpha}\, \hat\tau^\alpha\,\underbrace{  \int_{\hat \tau}^{1}\!\!
\hat s^{\color{blue}-\alpha-1}(1-\hat s)^{-\alpha-1}\min\{1,(1-\hat s)/\hat\tau\}\,d\hat s}_{{}\lesssim
%\,{\color{blue}\hat \tau_1^{-\alpha/r}}\,+\,\hat\tau_m^{-\alpha}\, \lesssim\,
\hat\tau^{-\alpha} }
\lesssim t_m^{-\alpha}.
$$
Here $\hat\tau \le \frac12\le1-\hat\tau$, so one may consider the intervals $(\hat \tau,\frac12 )$, $(\frac12,1-\hat\tau)$ and $(1-\hat\tau,1)$ separately.
%We also used $\hat \tau_1\simeq\hat\tau_m^r$
%(the latter follows from $\tau_m/t_m\simeq M^{-1}t_m^{-1/r}\simeq (\tau_1/t_m)^{1/r}$
%in view of \eqref{t_grid}).
}

Now, using \eqref{widetilde_J} in \eqref{r_m_simplest},
% combined with $\widetilde{\mathcal J}^m\lesssim t_m^{-\alpha}$
we arrive at
a version of
\eqref{r_m_simplest_main}:
\beq\label{new_rm_bound}
|r^m|\lesssim t_m^{-\alpha-1}\max_{j=1,\ldots,m}\bigl\{t_j\psi^j\bigr\}
\lesssim  t_m^{-\gamma-1}\max_{j=1,\ldots,m}\bigl\{t_j^{1-\alpha+\gamma}\,\psi^j\bigr\}.
\eeq
Finally, %set $r=1$, and
an application of Lemma~\ref{lem_main_stability_star} yields the desired assertion.
\end{proof}

%\begin{remark}[Alternative to \eqref{psi_j_def} definition of $\psi^j$]
%\label{rem_psi_j_star}
%An inspection of the proof of Lemma~\ref{lem_simplest}
%shows that one can replace $\psi^j$ in
%\eqref{chi_bounds_j2}.
%by
%\beq\label{psi_j_star}
%\widetilde\psi^j:=
%\tau_j^{1-\alpha} \,t_j^\alpha \sup_{s\in(t_{j-1},t_j)}\!\!\!|\pt_su(s)|)\qquad\mbox{for}\;\;j\ge 2,
%\eeq
%as well as by $\min\{\psi^j,\,\widetilde\psi^j\}$.
%Consequently, this replacement can be made is the statements of Lemmas~\ref{lem_simplest} and~\ref{lem_simplest_star}.
%\end{remark}

\begin{corollary}[Uniform temporal mesh]\label{cor_simplest_star}
Under the conditions of Lemma~\ref{lem_simplest}, let $r=1$ and $\tau=T M^{-1}$, and suppose $|\pt^l_t u(t)|\lesssim 1+t^{\alpha-l}$  for $l=1,2$ and $t\in(0,T]$.
Then $|u(t_m)-U^m|\lesssim t_m^{\alpha-1}M^{-1}\lesssim M^{-\alpha}$ for $m=1,\ldots,M$.
\end{corollary}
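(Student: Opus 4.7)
The plan is to reduce the corollary directly to Lemma~\ref{lem_simplest}${}^*$ by bounding the quantities $\psi^j$ from \eqref{psi_def} using the prescribed regularity $|\pt_t^l u(t)|\lesssim 1+t^{\alpha-l}\lesssim t^{\alpha-l}$ (since $t\le T$). This is exactly the same pattern as in the proof of Corollary~\ref{cor_simplest}, but with $r=1$ and $\tau_j=\tau$ throughout; the interesting part is checking that the extra weight $\tau^{-\gamma}t_j^{1-\alpha+\gamma}$ is absorbed uniformly in $j$.

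First I would estimate $\psi^1$ exactly as in Corollary~\ref{cor_simplest}: since $s^{1-\alpha}|\pt_s u(s)|\lesssim 1$ and $s^{1-\alpha}|\delta_t u(t_1)|\lesssim\tau^{-\alpha}\int_0^\tau s^{\alpha-1}ds\lesssim 1$, one obtains $\psi^1\lesssim\tau^\alpha$. For $j\ge 2$, the key observation specific to the uniform case is that $t_{j-1}\ge t_j/2$, which gives $|\pt_s^2 u(s)|\lesssim t_j^{\alpha-2}$ on $(t_{j-1},t_j)$, and hence from \eqref{psi_j_def},
\[
\psi^j\lesssim \tau^{2-\alpha}\,t_j^\alpha\,t_j^{\alpha-2}=\tau^{2-\alpha}\,t_j^{2\alpha-2}.
\]

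Next I would substitute these into the bound of Lemma~\ref{lem_simplest}${}^*$. For $j=1$ (using $t_1=\tau$), the weighted term equals $\tau^{-\gamma}\tau^{1-\alpha+\gamma}\tau^\alpha=\tau$. For $j\ge 2$ it is $\lesssim \tau^{2-\alpha-\gamma}\,t_j^{\alpha+\gamma-1}$. The main point is that $\alpha+\gamma-1\le 0$ for both branches of $\gamma=\min\{\alpha,1-\alpha\}$: if $\alpha\le\tfrac12$ then $\gamma=\alpha$ and $\alpha+\gamma-1=2\alpha-1\le 0$; if $\alpha>\tfrac12$ then $\gamma=1-\alpha$ and $\alpha+\gamma-1=0$. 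Combined with $t_j\ge\tau$, this yields $t_j^{\alpha+\gamma-1}\le\tau^{\alpha+\gamma-1}$, so the $j\ge 2$ contribution is also $\lesssim\tau$. Taking the maximum gives
\[
\max_{j=1,\ldots,m}\bigl\{\tau^{-\gamma}\,t_j^{1-\alpha+\gamma}\,\psi^j\bigr\}\lesssim \tau=TM^{-1}.
\]

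Lemma~\ref{lem_simplest}${}^*$ then produces $|u(t_m)-U^m|\lesssim t_m^{\alpha-1}M^{-1}$, and the second bound follows from $t_m\ge\tau\simeq M^{-1}$, which gives $t_m^{\alpha-1}\lesssim M^{1-\alpha}$. The only real obstacle is the case split on $\gamma$, i.e.\ verifying that $\alpha+\gamma-1\le 0$ in both regimes — this is precisely why $\gamma$ was chosen as $\min\{\alpha,1-\alpha\}$ in Lemma~\ref{lem_simplest}${}^*$, and without this particular balance the factor $t_j^{\alpha+\gamma-1}$ could not be controlled uniformly.
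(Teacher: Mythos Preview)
Your proof is correct and follows essentially the same route as the paper's: estimate $\psi^1\lesssim\tau^\alpha$ and $\psi^j\lesssim\tau^{2-\alpha}t_j^{2\alpha-2}$ for $j\ge 2$, then use $\gamma\le 1-\alpha$ together with $\tau\le t_j$ to absorb the weight and apply Lemma~\ref{lem_simplest}${}^*$. Your explicit case split on $\gamma=\min\{\alpha,1-\alpha\}$ just spells out why $\alpha+\gamma-1\le 0$, which the paper records more tersely as ``$\gamma\le 1-\alpha$''.
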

\begin{proof}
We imitate the proof of Corollary~\ref{cor_simplest}, only now
employ Lemma~\ref{lem_simplest_star}.
% with $r=1$ and the notation $\tau:=T/M=\tau_j$ for $j\ge1$.
So it suffices to show that $\tau^{-\gamma}\,t_j^{1-\alpha+\gamma}\psi^j\lesssim \tau$.
%Again, $\psi^1\lesssim \tau^\alpha$ so, as required,  $t_1^{1-\alpha}\psi^1\lesssim \tau$.
For $j=1$, this follows from $\psi^1\lesssim \tau^\alpha$,
%and $\psi^j$ for $j\ge 2$ replaced by $\widetilde\psi^j$ of \eqref{psi_j_star} (the latter, in in view of Remark~\ref{rem_psi_j_star}).
%
while for $j\ge2$, from
$\psi^j\lesssim \tau^{2-\alpha} t_j^{\alpha+(\alpha-2)}\lesssim \tau^{1+\gamma} t_j^{\alpha-1-\gamma}$
(as $\tau\le t_j$ and $\gamma\le 1-\alpha$).
%so
%$(t_j/t_m)^{1-\alpha}\psi^j\lesssim \tau^{2-\alpha} t_j^{\alpha-1}/t_m^{1-\alpha}\le \tau^{2-\alpha}/t_m^{1-\alpha}$,
%so again $t_j^{1-\alpha}\psi^j\lesssim \tau$.
\end{proof}

\section{Error analysis for the L1 semidiscretization in time}\label{sec_semidiscr}
Consider the semidiscretization of our problem~\eqref{problem} in time using the L1-method:
\beq\label{semediscr_method}
\delta_t^\alpha U^j +\LL U^j= f(\cdot,t_j)\;\;\mbox{in}\;\Omega,\;\;U^j=0\;\;\mbox{on}\;\pt\Omega\;\;\mbox{for}\;\;j=1,\ldots,M;\quad U^0=u_0.
\eeq

\begin{theorem}\label{theo_semidiscr}
(i) Given $p\in\{2,\infty\}$, let  $\{t_j=T(j/M)^r\}_{j=0}^M$ for some $r\ge1$,
and $u$ and $U^j\!$  respectively satisfy \eqref{problem},\eqref{LL_def}\! and \eqref{semediscr_method}.
Then, under the condition \mbox{$c-p^{-1}\!\sum_{k=1}^d\!\pt_{x_k}\!b_k\ge 0$}, one has
%if $\{c-p^{-1}div b\} +\vartheta\ge 0$ for some fixed constant $\vartheta_p\in[0,T^{-\alpha}/\Gamma(1-\alpha))$, then
\beq\label{semediscr_error_bound}
\|u(\cdot,t_m)-U^m\|_{L_p(\Omega)}\lesssim \max_{j=1,\ldots, m}\|\psi^j\|_{L_p(\Omega)}
\qquad \mbox{for}\;\;
m=1,\ldots,M,
%\vspace{-0.1cm}
%p=2,\infty,
\eeq
%For $p=2,\infty$
where
%$m=1,\ldots,M$, and
$\psi^j=\psi^j(x)$ is defined by \eqref{psi_def}, in which $u(\cdot)$ is understood as $ u(x,\cdot)$  when evaluating $\pt_su$, $\pt_s^2 u$ and $\delta_tu$.
\\%\newpage\noindent
(ii) Furthermore,
if $r=1$,
a sharper  $\max_{j=1,\ldots, m}\{\tau^{-\gamma}\,t_j^{1-\alpha+\gamma}\,\|\psi^j\|_{L_p(\Omega)}\}$
can replace the right-hand side  in \eqref{semediscr_error_bound},
where $\tau=TM^{-1}$ and $\gamma=\min\{\alpha,1-\alpha\}$.
\end{theorem}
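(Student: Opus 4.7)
The plan is to mirror the scalar analysis of Lemmas \ref{lem_simplest} and \ref{lem_simplest_star}, now upgraded to the semidiscretization \eqref{semediscr_method} by (a) bounding the temporal truncation error in $L_p(\Omega)$ via Minkowski's integral inequality and (b) promoting the stability results of Lemmas \ref{lem_main_stability} and \ref{lem_main_stability_star} from scalars to the semidiscrete operator $\delta_t^\alpha + \LL$ using the stated sign condition on the coefficients of $\LL$.

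First I would set $e^j := u(\cdot, t_j) - U^j$; subtracting \eqref{semediscr_method} from \eqref{problem} at $t = t_j$ gives $\delta_t^\alpha e^j + \LL e^j = r^j$ with $e^0 = 0$ and $e^j|_{\pt\Omega} = 0$, where $r^j(x) = \delta_t^\alpha u(x, t_j) - D_t^\alpha u(x, t_j)$ is exactly the scalar truncation error \eqref{err_simplest_prob} applied pointwise in $x$. The integral representation
\[
\Gamma(1-\alpha)\, r^m(x) = \alpha \sum_{j=1}^m \int_{t_{j-1}}^{t_j}(t_m - s)^{-\alpha - 1}\, \chi(x, s)\, ds
\]
from the proof of Lemma \ref{lem_simplest} is pointwise in $x$, so Minkowski's integral inequality combined with the $L_p$-versions of \eqref{chi_bounds} (obtained by replacing $\psi^j$ by $\|\psi^j\|_{L_p}$) allows the estimates of $\mathring{\mathcal J}^m$ and ${\mathcal J}^m$ (and, for $r = 1$, of $\widetilde{\mathcal J}^m$) to carry over verbatim. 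This yields $\|r^m\|_{L_p} \lesssim t_m^{-\alpha}\max_{j \le m}\|\psi^j\|_{L_p}$ for part~(i), and $\|r^m\|_{L_p} \lesssim t_m^{-\gamma-1}\max_{j \le m}\{t_j^{1-\alpha+\gamma}\|\psi^j\|_{L_p}\}$ for part~(ii).

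The second step is $L_p$-stability of the semidiscrete operator. For $p = \infty$, under $c \ge 0$ a spatially constant barrier $W^j$ defined by $\delta_t^\alpha W^j = \|r^j\|_{L_\infty}$, $W^0 = 0$, is nonnegative, so $(\delta_t^\alpha + \LL) W^j \ge \|r^j\|_{L_\infty} \ge |r^j(x)|$ on $\Omega$ and $W^j \ge 0$ on $\pt\Omega$; the continuous maximum principle for $\kappa_{j,j} + \LL$ applied stepwise yields $|e^j(x)| \le W^j$, and Lemma \ref{lem_main_stability}(ii) then supplies $W^m \lesssim \max_j t_j^\alpha \|r^j\|_{L_\infty}$. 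For $p = 2$, the condition $c - \tfrac12\sum_k \pt_{x_k} b_k \ge 0$ yields coercivity $(\LL v, v) \ge 0$ on $H^1_0(\Omega)$; testing the error equation against $e^j$ and using the discrete Alikhanov inequality $(\delta_t^\alpha e^j, e^j) \ge \tfrac12 \delta_t^\alpha\|e^j\|_{L_2}^2$ gives $\delta_t^\alpha \|e^j\|_{L_2}^2 \le 2\|r^j\|_{L_2}\|e^j\|_{L_2}$. Picking $n \le m$ with $\|e^n\|_{L_2} = \max_{j \le m}\|e^j\|_{L_2}$ and reproducing the telescoping argument \eqref{Wn_estimation} then yields $\kappa_{n,1}\|e^n\|_{L_2} \le 2\|r^n\|_{L_2}$, so $\|e^n\|_{L_2} \lesssim t_n^\alpha \|r^n\|_{L_2}$. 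Combining with the truncation bound above proves part~(i) for both choices of $p$.

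For part~(ii) the same reduction applies but the concluding scalar stability is Lemma \ref{lem_main_stability_star}. In the $p = \infty$ case the barrier $W^j$ constructed above satisfies its hypothesis verbatim and the conclusion follows at once. The hard point, which I expect to be the main obstacle, is $p = 2$ in part~(ii): the energy method produces only the quadratic inequality $\delta_t^\alpha(\|e^j\|_{L_2})^2 \le 2\|r^j\|_{L_2}\|e^j\|_{L_2}$, not the linear $\delta_t^\alpha\|e^j\|_{L_2} \lesssim \tau^\gamma t_j^{-\gamma-1}$ demanded by Lemma \ref{lem_main_stability_star}. I would deal with this by adapting the barrier argument of Appendix~\ref{app_B}: with $Y^j \sim t_j^{\alpha - 1}$ the barrier furnished there, I would argue by induction on $m$ that $\|e^m\|_{L_2} \le K\, Y^m$ for a constant $K$ proportional to $\tau^{-\gamma}\max_{j \le m}\{t_j^{1-\alpha+\gamma}\|\psi^j\|_{L_2}\}$, using the M-matrix structure of $\delta_t^\alpha$ together with the quadratic inequality to close the inductive step.
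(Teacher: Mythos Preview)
Your setup, truncation-error bounds, and the $p=\infty$ arguments are essentially the paper's. The real divergence is in the $L_2$ stability step, and it creates an unnecessary obstacle that you then struggle to remove.

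You invoke the Alikhanov-type inequality $\langle\delta_t^\alpha e^j,e^j\rangle\ge\tfrac12\delta_t^\alpha\|e^j\|_{L_2}^2$, which hands you only the \emph{quadratic} relation $\delta_t^\alpha\|e^j\|_{L_2}^2\le 2\|r^j\|_{L_2}\|e^j\|_{L_2}$. For part~(i) your max-index trick recovers the right bound, but for part~(ii) you correctly notice that Lemma~\ref{lem_main_stability_star} wants a \emph{linear} hypothesis, and you propose to patch this by an inductive barrier argument. That patch is vague and, as stated, does not close: squaring the barrier $Y^j\sim t_j^{\alpha-1}$ would force you to control $\delta_t^\alpha(Y^j)^2$, which has no reason to dominate $\|r^j\|_{L_2}\,Y^j$ with the right constants.

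The paper sidesteps all of this by obtaining the linear inequality $\delta_t^\alpha\|e^m\|_{L_2}\le\|r^m\|_{L_2}$ \emph{directly}. Rewrite $\delta_t^\alpha e^m+\LL e^m=r^m$ via \eqref{kappa_def_a} as
\[
\kappa_{m,m}e^m+\LL e^m=\sum_{j=1}^m(\kappa_{m,j}-\kappa_{m,j-1})e^{j-1}+r^m,
\]
take the $L_2$ inner product with $e^m$, use $\langle\LL e^m,e^m\rangle\ge 0$, apply Cauchy--Schwarz \emph{term by term} to each $\langle e^{j-1},e^m\rangle$ and to $\langle r^m,e^m\rangle$, and divide through by $\|e^m\|_{L_2}$. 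This gives
\[
\kappa_{m,m}\|e^m\|_{L_2}\le\sum_{j=1}^m(\kappa_{m,j}-\kappa_{m,j-1})\|e^{j-1}\|_{L_2}+\|r^m\|_{L_2},
\]
which by \eqref{kappa_def_a} is exactly $\delta_t^\alpha\|e^m\|_{L_2}\le\|r^m\|_{L_2}$. Now Lemma~\ref{lem_main_stability} gives part~(i) and Lemma~\ref{lem_main_stability_star} gives part~(ii) with no further work; the $p=2$ and $p=\infty$ cases are handled in complete parallel. The point is that expanding $\delta_t^\alpha$ \emph{before} pairing with $e^m$ lets Cauchy--Schwarz act on each historical term separately, avoiding the loss inherent in the Alikhanov square.
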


\begin{corollary}\label{cor_semidiscr}
(i)
Under the conditions of Theorem~\ref{theo_semidiscr}, suppose $\|\pt_t^l u(\cdot,t)\|_{L_p(\Omega)}\lesssim 1+t^{\alpha-l}$ for $l=1,2$ and $t\in(0,T]$.
Then $\|u(\cdot,t_m)-U^m\|_{L_p(\Omega)}\lesssim M^{-\min\{\alpha r,2-\alpha\}}$ for $m=1,\ldots,M$.
\\
(ii) If, additionally, $r=1$, then  $\|u(\cdot,t_m)-U^m\|_{L_p(\Omega)}\lesssim t_m^{\alpha-1}M^{-1}$ for $m=1,\ldots,M$.
\end{corollary}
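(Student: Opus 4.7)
The plan is to combine the abstract error bounds of Theorem~\ref{theo_semidiscr} with pointwise estimates of $\psi^j(x)$ that mirror the calculations already carried out in Corollary~\ref{cor_simplest} and Corollary~\ref{cor_simplest_star}. Concretely, the hypothesis $\|\pt_t^l u(\cdot,t)\|_{L_p(\Omega)}\lesssim 1+t^{\alpha-l}$ plays the same role that the pointwise bound $|\pt_t^l u(t)|\lesssim 1+t^{\alpha-l}$ played in the scalar case, once one notes that taking the $L_p(\Omega)$ norm commutes with the suprema in $s$ appearing in \eqref{psi_def} (for $p=\infty$ trivially, and for $p=2$ because the suprema are over compact intervals disjoint from the $x$-variable, so $\|\sup_s|\pt_s^l u(x,s)|\|_{L_p}\le \sup_s\|\pt_s^l u(\cdot,s)\|_{L_p}$ is used via the derivative hypothesis).

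For part (i), I would start with \eqref{semediscr_error_bound} and show $\|\psi^j\|_{L_p(\Omega)}\lesssim M^{-\min\{\alpha r,2-\alpha\}}$ for every $j\ge 1$. For $j=1$, applying the $L_p$ norm in $x$ to the bound $s^{1-\alpha}|\delta_t u(t_1)-\pt_s u(s)|$ and invoking the hypothesis with $l=1$ yields $\|\psi^1\|_{L_p}\lesssim \tau_1^\alpha\simeq M^{-\alpha r}$, exactly as in Corollary~\ref{cor_simplest}. For $j\ge 2$, the relation $t_{j-1}\ge 2^{-r}t_j$ together with the hypothesis for $l=2$ gives $\sup_{s\in(t_{j-1},t_j)}\|\pt^2_s u(\cdot,s)\|_{L_p}\lesssim t_j^{\alpha-2}$, so $\|\psi^j\|_{L_p}\lesssim (\tau_j/t_j)^{2-\alpha}\,t_j^\alpha$, which by the mesh relation \eqref{t_grid} is $\lesssim M^{-\min\{\alpha r,2-\alpha\}}$ after setting $\gamma=\min\{\alpha r,2-\alpha\}$ and using $(\tau_j/t_j)^{2-\alpha}\le (\tau_j/t_j)^\gamma\lesssim M^{-\gamma}t_j^{-\gamma/r}$ followed by $t_j^{\alpha-\gamma/r}\lesssim 1$. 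Taking the maximum over $j$ and inserting in \eqref{semediscr_error_bound} gives the claim.

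For part (ii), I would invoke the sharper bound from Theorem~\ref{theo_semidiscr}(ii) and repeat the reduction from Corollary~\ref{cor_simplest_star}: it suffices to show $\tau^{-\gamma}\,t_j^{1-\alpha+\gamma}\,\|\psi^j\|_{L_p(\Omega)}\lesssim \tau$ with $\gamma=\min\{\alpha,1-\alpha\}$. For $j=1$, $\|\psi^1\|_{L_p}\lesssim\tau^\alpha$ together with $t_1=\tau$ yields $\tau^{-\gamma}\tau^{1-\alpha+\gamma}\tau^\alpha=\tau$. For $j\ge 2$, $\|\psi^j\|_{L_p}\lesssim\tau^{2-\alpha}t_j^{\alpha-2+\alpha}\lesssim \tau^{1+\gamma}t_j^{\alpha-1-\gamma}$ (using $\tau\le t_j$ and $\gamma\le 1-\alpha$), so $\tau^{-\gamma}t_j^{1-\alpha+\gamma}\|\psi^j\|_{L_p}\lesssim \tau$. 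Then Theorem~\ref{theo_semidiscr}(ii) gives $\|u(\cdot,t_m)-U^m\|_{L_p(\Omega)}\lesssim t_m^{\alpha-1}M^{-1}$, and the secondary bound $M^{-\alpha}$ follows from $t_m\ge \tau$.

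The only point that requires a bit of care (and the closest thing to an obstacle) is the commutation of $L_p(\Omega)$ with the temporal suprema inside the definition of $\psi^j$; but this is immediate in the uniform case and follows for $L_2$ from a standard integral/supremum swap allowed by the assumed regularity of $\pt_s^l u$. Once this is noted, the rest of the argument is a transcription of Corollaries~\ref{cor_simplest} and~\ref{cor_simplest_star} into the $L_p(\Omega)$-valued setting.
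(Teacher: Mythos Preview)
Your overall strategy---reduce to Theorem~\ref{theo_semidiscr} and then bound the $\psi^j$ exactly as in Corollaries~\ref{cor_simplest} and~\ref{cor_simplest_star}---is precisely what the paper does (its proof is literally ``imitate the proofs of Corollaries~\ref{cor_simplest} and~\ref{cor_simplest_star}''). The arithmetic you carry out for $j=1$ and $j\ge 2$ in both parts is correct.

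However, your handling of the $p=2$ case contains a genuine error. You claim
\[
\Bigl\|\sup_{s}|\pt_s^l u(\cdot,s)|\Bigr\|_{L_2(\Omega)}\;\le\;\sup_{s}\|\pt_s^l u(\cdot,s)\|_{L_2(\Omega)},
\]
and justify it by saying the sup is ``over compact intervals disjoint from the $x$-variable''. This inequality is false in general (the reverse inequality is the trivial one); e.g.\ take $g(x,s)=\mathbf{1}_{[s,s+1]}(x)$ on $\Omega=(0,2)$, $s\in[0,1]$, where the left side is $\sqrt2$ and the right side is $1$. So from the hypothesis $\|\pt_t^2 u(\cdot,t)\|_{L_2}\lesssim 1+t^{\alpha-2}$ you cannot directly bound $\|\psi^j\|_{L_2}$ as written, since the latter has the sup \emph{inside} the norm.

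The fix is not to swap sup and norm, but to go one level deeper into the proof of Lemma~\ref{lem_simplest}. The truncation error there is built from the interpolation error $\chi(\cdot,s)=u(\cdot,s)-u^I(\cdot,s)$, which has an \emph{integral} representation in $s$ (e.g.\ via the Peano kernel for linear interpolation, or simply $\chi(\cdot,s)=\int_s^{t_1}[\pt_{s'}u-\delta_t u(t_1)]\,ds'$ on $(0,t_1)$). Applying Minkowski's integral inequality to that representation gives, for $j\ge2$,
\[
\|\chi(\cdot,s)\|_{L_p(\Omega)}\;\lesssim\;\tau_j(t_j-s)\,\sup_{s'\in(t_{j-1},t_j)}\|\pt_{s'}^2 u(\cdot,s')\|_{L_p(\Omega)},
\]
i.e.\ the $L_p$ norm is already \emph{inside} the temporal sup. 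Repeating the estimation of $r^m$ with $\|\chi(\cdot,s)\|_{L_p}$ in place of $|\chi(s)|$ then yields $\|r^m\|_{L_p}\lesssim t_m^{-\alpha}\max_j\tilde\psi^j$, where $\tilde\psi^j$ is the scalar obtained from \eqref{psi_def} by replacing $|\cdot|$ with $\|\cdot\|_{L_p(\Omega)}$ \emph{before} taking the sup in $s$. Those scalars are exactly what your hypothesis controls, and the remainder of your argument goes through verbatim. In short: ``imitate'' should be read as redoing Lemma~\ref{lem_simplest} with $L_p(\Omega)$-valued functions, not as bounding $\|\psi^j(x)\|_{L_p}$ after the fact.
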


\begin{proof}
Imitate the proofs of Corollaries~\ref{cor_simplest} and~\ref{cor_simplest_star} for parts (i) and (ii), respectively.
\end{proof}

%\begin{remark}
%Note that as $\alpha\rightarrow 1^+$, the value $\Gamma(1-\alpha)\rightarrow+\infty$ so
%the upper bound on the constant $\vartheta_p$ also $\rightarrow+\infty$, so
%the restriction $\{c-p^{-1}div b\} \Gamma(1-\alpha)T^\alpha+\vartheta_p\ge 0$
%becomes negligible.
%\end{remark}

\noindent
{\it Proof of Theorem~\ref{theo_semidiscr}.}
For the error $e^m:= u(\cdot,t_m)-U^m$, using \eqref{problem} and \eqref{semediscr_method}, one easily gets a version of \eqref{err_simplest_prob}:
\beq\label{new_e_prob_}
\delta_t^\alpha e^m +\LL e^m=
%(\delta_t^\alpha +\LL) u(\cdot,t_j)-f(\cdot,t_j)=
\underbrace{\delta_t^\alpha u(\cdot,t_m)-D_t^\alpha u(\cdot,t_m)}_{{}=:r^m}\quad\mbox{for}\;\;m=1,\ldots,M,
\qquad e^0=0.
\eeq
Note that the bound \eqref{r_m_simplest_main} on $r^m$  obtained in the proof of Lemma~\ref{lem_simplest} implies that
$\|r^m\|_{L_p(\Omega)}\lesssim t_m^{-\alpha}\max_{j=1,\ldots,m}\|\psi^j\|_{L_p(\Omega)}$.
Hence, to complete the proof of part (i), it suffices to show that
\beq\label{semi_error}
\delta_t^\alpha \|e^m\|_{L_p(\Omega)}\le \|r^m\|_{L_p(\Omega)}%+\vartheta\|e^m\|_{L_p(\Omega)}
\qquad\mbox{for}\;\;m=1,\ldots, M.
\eeq
%for $m=1,\ldots, M$.
Then, indeed, \eqref{semediscr_error_bound} immediately follows by an application of Lemma~\ref{lem_main_stability}.

%Furthermore,
%we have also shown  that
%$|r^m|\lesssim t_m^{-\alpha}\max_{j=1,\ldots,m}\{(t_j/t_m)^{1-\alpha/r}\,\psi^j\bigr\}$
%(see the proof of Lemma~\ref{lem_simplest_star}).
If $r=1$, combining
\eqref{new_rm_bound} (obtained in the proof of Lemma~\ref{lem_simplest_star})
with \eqref{semi_error} and then applying Lemma~\ref{lem_main_stability_star} yields the assertion of part (ii).

We now proceed to establishing \eqref{semi_error}.
Rewrite the equation $\delta_t^\alpha e^m +\LL e^m=r^m$ using \eqref{kappa_def_a} as
\beq\label{semidsicr_e_eq}
\underbrace{\kappa_{m,m}}_{{}>0} e^m+\LL e^m=\sum_{j=1}^m \underbrace{(\kappa_{m,j}-\kappa_{m,j-1})}_{{}>0}e^{j-1}+r^m,
\eeq
and address the cases $p=2$ and $p=\infty$ separately.

For $p=2$, consider the $L_2(\Omega)$ inner product (denoted $\langle\cdot,\cdot\rangle$)
of \eqref{semidsicr_e_eq} with $e^m$.
As
$c-\frac12\sum_{k=1}^d\pt_{x_k}\!b_k\ge 0$ implies
$\langle\LL e^m,e^m\rangle\ge0$, so for $p=2$ one gets
\beq\label{semi_error_intermd}
\kappa_{m,m} \| e^m\|_{L_p(\Omega)}%-\vartheta_p\|e^m\|_2^2
\le \sum_{j=1}^m (\kappa_{m,j}-\kappa_{m,j-1})\| e^{j-1}\|_{L_p(\Omega)}+\|r^m\|_{L_p(\Omega)}.
\eeq
By \eqref{kappa_def_a}, this implies \eqref{semi_error} for $p=2$.

For $p=\infty$, let $\max_{x\in\Omega}|e^m(x)|=|e^m(x^*)|$ for some $x^*\in\Omega$. Suppose that $e^m(x^*)\ge0$
(the case $e^m(x^*)<0$ is similar). Then $c\ge 0$ implies
$\LL e^m(x^*)\ge 0$,
so \eqref{semidsicr_e_eq} at $x=x^*$
yields
$\kappa_{m,m}e^m(x^*)\le \sum_{j=1}^m(\kappa_{m,j}-\kappa_{m,j-1})e^{j-1}(x^*)+r^m(x^*)$
and then \eqref{semi_error_intermd} for $p=\infty$.
By \eqref{kappa_def_a}, the desired assertion \eqref{semi_error} follows for $p=\infty$.

{\color{blue}
%Note that in our proof of \eqref{semi_error} for $p=\infty$, we relied on $\LL e^m$ being well-defined.
%Indeed, \eqref{semediscr_method} implies that $U^m$ solves an elliptic equation with the operator $\LL+\kappa_{m,m}$, so
%$U^m\in C^{2}(\Omega)\cap C(\bar \Omega)$ $\forall\,m\ge 1$. Now, if $u(\cdot, t_m)$ enjoys a similar regularity, $\LL e^m$ is well-defined in the classical sense.
%%
%\color{red}
%%If $u(\cdot, t_m)$ happens to be less regular,
%Otherwise,
% $\{\psi^j\}\in {L_\infty(\Omega)}$ implies that, at least, $u(\cdot, t_m)\in C^1(\Omega)\cap C(\bar\Omega)$, so one can modify the above argument
% by using a more general $\kappa_{m,m}\|e^m\|_{L_\infty(\Omega)}\le\|(\LL+\kappa_{m,m})e^m\|_{L_\infty(\Omega)}$
% (the latter follows from the maximum principle
%extended to functions in $W_2^1(\Omega)$  [??, section 8.1].)
%
Note that in our proof of \eqref{semi_error} for $p=\infty$, we relied on $\LL e^m$ being well-defined in the classical sense.
More generally,  \eqref{new_e_prob_}
implies that $e^m$ solves an elliptic equation with the operator $\LL+\kappa_{m,m}$. Now,
$\{r^j\}\in {L_\infty(\Omega)}$ implies that $e^m\in  C(\bar\Omega)$. So one can modify the above argument
 by using a more general result $\kappa_{m,m}\|e^m\|_{L_\infty(\Omega)}\le\|(\LL+\kappa_{m,m})e^m\|_{L_\infty(\Omega)}$
 (the latter follows from the maximum principle
for functions in $C(\bar\Omega)$  \cite[Corollary~3.2]{GTru}.)
%\cite[\S8.1]{GTru}.)
}
%
%
%As \eqref{kappa_def} is established for $p=2,\infty$, we are done.
\hfill$\square$

\begin{remark}[More general $\LL$]\label{LL_gen}\color{blue}
The results of this section also apply to a general uniformly-elliptic $\LL$ defined by
$\LL u := \sum_{k=1}^d \bigl\{-\sum_{n=1}^d\pt_{x_k}\!(a_{kn}\,\pt_{x_n}\!u) + b_k\, \pt_{x_k}\!u \bigr\}+c\, u$,
where the coefficients $a_{kn}(x)$ form a symmetric uniformly-positive-definite matrix.
Indeed, when
establishing~\eqref{semi_error}, %in the proof of  Theorem~\ref{theo_semidiscr},
we still have $\langle\LL e^m,e^m\rangle\ge0$  and $\kappa_{m,m}\|e^m\|_{L_\infty(\Omega)}\le\|(\LL+\kappa_{m,m})e^m\|_{L_\infty(\Omega)}$ for, respectively,  $p=2$ and $p=\infty$.
For fully discrete finite-element discretizations, Theorem~\ref{theo_FEM} remains valid, but condition $\mathrm A_p$ for $p=\infty$ may be problematic. Similarly, finite-difference discretizations
that satisfy the discrete maximum principle are not readily available in this more general case.
\end{remark}

\section{Maximum norm error analysis for
finite difference discretizations}\label{sec_FD}

Consider our problem \eqref{problem}--\eqref{LL_def} in the spatial domain $\Omega=(0,1)^d\subset\R^d$.
%To discretize it in space,
Let
 $\bar\Omega_h$ be the tensor product of $d$ uniform meshes $\{ih\}_{i=0}^N$,
 with $\Omega_h:=\bar\Omega_h\backslash\pt\Omega$ denoting the set of interior mesh nodes.
Now, consider the finite difference discretization
\beq\label{FD_problem}
\begin{array}{l}
\delta_t^\alpha U^j(z) +\LL_h U^j(z)= f(z,t_j)\quad\mbox{for}\;\;z\in\Omega_h,\;\;j=1,\ldots,M,\\[0.2cm]
U^j=0\quad\mbox{in}\;\;\bar\Omega_h\cap\pt\Omega,\;\;j=1,\ldots,M,\qquad
U^0=u_0\quad\mbox{in}\;\;\bar\Omega_h.
\end{array}
\eeq
Here $\delta_t^\alpha$ is defined by \eqref{delta_def}. The discrete spatial operator $\LL_h$
is a standard finite difference operator defined,
using the standard orthonormal basis $\{\mathbf{i}_k\}_{k=1}^d$ in $\R^d$
(such that $z=(z_1,\ldots,z_d)=\sum_{k=1}^d z_k\, \mathbf{i}_k$ for any $z\in\R^d$), by
\begin{align*}
&\LL_hV(z):=\\[-0.1cm]
%\pt_{x_k}(a_k(x)\,\pt_{x_k}u)(z)\;\;\mapsto\;\;
&\sum_{k=1}^d h^{-2}\Bigl\{a_k(z+{\textstyle \frac12}h\mathbf{i}_k)\,\bigl[U(z)-U(z+h\mathbf{i}_k)\bigr]+a_k(z-{\textstyle \frac12}h\mathbf{i}_k)\,\bigl[U(z)-U(z-h\mathbf{i}_k)\bigr]\Bigr\}\\[-0.4cm]
&\qquad\quad{}+\sum_{k=1}^d{\textstyle \frac12}h^{-1}\, b_k(z)\,\bigl[U(z+h\mathbf{i}_k)-U(z-h\mathbf{i}_k)\bigr] +c(z)\,U(z)
\quad\qquad\mbox{for}\;\;z\in\Omega_h.
\end{align*}
(Here the terms in the first and second sums respectively discretize $-\pt_{x_k}\!(a_k\,\pt_{x_k}\!u)$ and $b_k\, \pt_{x_k}\!u$ from \eqref{LL_def}.)
The error of this method will be bounded in the nodal maximum norm, denoted
$\|\cdot\|_{\infty\,;\Omega_h}:=\max_{\Omega_h}|\cdot|$.

\begin{theorem}\label{theo_FD}
(i) Let
$\{t_j=T(j/M)^r\}_{j=0}^M$ for some $r\ge1$, and $u$ %and $U^j$  respectively
satisfy \eqref{problem}--\eqref{LL_def} %and \eqref{FD_problem}
in $\Omega=(0,1)^d$ with $c\ge 0$.
Then, under the condition
\beq\label{d_max_pr}
h^{-1}\ge \max_{k=1,\ldots,d}\bigl\{{\textstyle\frac12}\|b_k\|_{L_\infty(\Omega)}\,\|a_k^{-1}\|_{L_\infty(\Omega)}\bigr\},
\eeq
there exists a unique solution $\{U^j\}_{j=0}^M$ of \eqref{FD_problem}, and
%if $\{c-p^{-1}div b\} +\vartheta\ge 0$ for some fixed constant $\vartheta_p\in[0,T^{-\alpha}/\Gamma(1-\alpha))$, then
%Then
\beq\label{FD_error_bound}
\|u(\cdot,t_m)-U^m\|_{\infty\,;\Omega_h}\lesssim \max_{j=1,\ldots, m}\|\psi^j\|_{L_\infty(\Omega)}+t_m^{\alpha}\,\|(\LL_h-\LL)u(\cdot,t_m)\|_{\infty\,;\Omega_h},
%\qquad \mbox{for}\;\;p=2,\infty,
\eeq
%For $p=2,\infty$
where
$m=1,\ldots,M$, and
$\psi^j=\psi^j(x)$ is defined by \eqref{psi_def}, in which $u(\cdot)$ is understood as $ u(x,\cdot)$  when evaluating $\pt_su$, $\pt_s^2 u$ and $\delta_tu$.
\\
(ii) If $r=1$, then $\max_{j=1,\ldots, m}\|\psi^j\|_{L_\infty(\Omega)}$
in \eqref{FD_error_bound} can be replaced by
a sharper  $\max_{j=1,\ldots, m}\{{\tau^{-\gamma}\,t_j^{1-\alpha+\gamma}}\,\|\psi^j\|_{L_\infty(\Omega)}\}$,
where $\tau=TM^{-1}$ and $\gamma=\min\{\alpha,1-\alpha\}$.
\end{theorem}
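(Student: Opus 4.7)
The plan is to mirror the $p=\infty$ argument from the proof of Theorem~\ref{theo_semidiscr}, with $\LL$ replaced by $\LL_h$ and the continuous maximum principle replaced by its discrete counterpart afforded by condition~\eqref{d_max_pr}. First comes solvability: at time level $t_m$ the scheme amounts to inverting $\kappa_{m,m} I + \LL_h$, and condition~\eqref{d_max_pr} forces the off-diagonal entries of $\LL_h$ arising from the central differences for the diffusion and convection terms ($-h^{-2} a_k(\cdot\pm\tfrac12 h\mathbf{i}_k)\pm\tfrac12 h^{-1}b_k$) to be non-positive. Combined with $c\ge 0$ and $\kappa_{m,m}>0$, this produces a strictly diagonally-dominant M-matrix, so each $U^m$ is uniquely determined and a discrete comparison principle is available.

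Setting $e^m(z):=u(z,t_m)-U^m(z)$ and subtracting \eqref{FD_problem} from \eqref{problem}--\eqref{LL_def} at interior nodes yields the error equation
\[
\delta_t^\alpha e^m + \LL_h e^m = r^m + \rho^m \quad\text{in }\Omega_h,\qquad e^m|_{\partial\Omega\cap\bar\Omega_h}=0,\qquad e^0=0,
\]
where $r^m := \delta_t^\alpha u(\cdot,t_m) - D_t^\alpha u(\cdot,t_m)$ is the temporal truncation (already controlled in the proofs of Lemma~\ref{lem_simplest} and Lemma~\ref{lem_simplest_star}) and $\rho^m := (\LL_h-\LL)u(\cdot,t_m)$ is the spatial truncation appearing in the theorem. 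Evaluating at the grid node $z^*\in\bar\Omega_h$ where $|e^m|$ is maximal (say $e^m(z^*)\ge 0$; the other case is symmetric) and using the M-matrix property of $\LL_h$ in place of the continuous-max argument in the proof of Theorem~\ref{theo_semidiscr}, I arrive at the scalar inequality
\[
\delta_t^\alpha \|e^m\|_{\infty;\Omega_h} \le \|r^m\|_{\infty;\Omega_h} + \|\rho^m\|_{\infty;\Omega_h}.
\]

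For part (i) I then apply Lemma~\ref{lem_main_stability}(ii) to the non-negative scalar sequence $\|e^m\|_{\infty;\Omega_h}$ with forcing $F^j := \|r^j\|_\infty + \|\rho^j\|_\infty$. The bound $\|r^j\|_\infty\lesssim t_j^{-\alpha}\max_{i\le j}\|\psi^i\|_\infty$ from \eqref{r_m_simplest_main} produces the $\max_j\|\psi^j\|_{L_\infty(\Omega)}$ contribution, while the $\rho^j$ part initially yields $\max_{j\le m}\{t_j^\alpha\|\rho^j\|_\infty\}$. For part (ii), I substitute Lemma~\ref{lem_main_stability_star} and the sharper temporal bound \eqref{new_rm_bound} to obtain $\max_j\{\tau^{-\gamma}t_j^{1-\alpha+\gamma}\|\psi^j\|_\infty\}$.

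The main obstacle I anticipate is upgrading the spatial contribution from the generic $\max_{j\le m}\{t_j^\alpha\|\rho^j\|_\infty\}$ to the cleaner $t_m^\alpha\,\|(\LL_h-\LL)u(\cdot,t_m)\|_{\infty;\Omega_h}$ asserted in the theorem. I would handle this via a time-dependent elliptic corrector: for each $m$, let $\phi^m\in C(\bar\Omega_h)$ solve $\LL_h\phi^m=\rho^m$ with homogeneous Dirichlet data, so that $\|\phi^m\|_\infty\lesssim\|\rho^m\|_\infty$ by the discrete maximum principle. Then $y^m:=e^m-\phi^m+\phi^0$ satisfies $y^0=0$ and a fractional-heat equation with forcing $r^m+\rho^0-\delta_t^\alpha\phi^m$; since $\LL_h^{-1}$ commutes with $\delta_t^\alpha$, the truncation paradigm of Lemma~\ref{lem_simplest} applied to the sequence $\phi^j$ transfers control of $\delta_t^\alpha\phi^j$ onto time derivatives of $\rho(t)=(\LL_h-\LL)u(\cdot,t)$, which inherit the assumed time regularity of $u$ through the commutation of $\LL_h-\LL$ with $\partial_t$. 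Reconstituting $e^m=y^m+\phi^m-\phi^0$ and absorbing the lower-order $\|\rho^0\|_\infty$ term into $\|\rho^m\|_\infty$ then produces the stated form.
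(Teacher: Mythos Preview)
Your core argument---error equation $\delta_t^\alpha e^m+\LL_h e^m=R^m$ with $R^m=r^m+(\LL_h-\LL)u(\cdot,t_m)$, the $M$-matrix structure of $\LL_h$ under~\eqref{d_max_pr} and $c\ge0$, the discrete maximum-principle step yielding $\delta_t^\alpha\|e^m\|_{\infty;\Omega_h}\le\|R^m\|_{\infty;\Omega_h}$, and the invocation of Lemma~\ref{lem_main_stability} (respectively Lemma~\ref{lem_main_stability_star} and \eqref{new_rm_bound} for part~(ii))---is exactly the paper's proof.

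Where you diverge is the last paragraph. You are right that Lemma~\ref{lem_main_stability} literally produces $\max_{j\le m}\{t_j^\alpha\|\rho^j\|\}$ for the spatial part, not $t_m^\alpha\|\rho^m\|$. The paper does not resolve this: it simply applies Lemma~\ref{lem_main_stability} and asserts that~\eqref{FD_error_bound} follows. In other words, the bound actually established by the paper's proof has $\max_{j\le m}\{t_j^\alpha\|(\LL_h-\LL)u(\cdot,t_j)\|_{\infty;\Omega_h}\}$ in place of the single-time term in~\eqref{FD_error_bound}; the statement as written is a mild imprecision, harmless in the intended use (Corollary~\ref{cor_FD}, where $\|(\LL_h-\LL)u(\cdot,t_j)\|\lesssim h^2$ uniformly in $j$, so the max is $\lesssim t_m^\alpha h^2$).

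Your elliptic-corrector construction is therefore unnecessary, and in any case it does not deliver the stated form either: after the substitution $y^m=e^m-\phi^m+\phi^0$ you pick up $\|\rho^0\|_\infty$ and $\delta_t^\alpha\phi^m$, and controlling the latter via Lemma~\ref{lem_simplest} brings in time derivatives of $(\LL_h-\LL)u$, not just $\|\rho^m\|$. The final ``absorbing $\|\rho^0\|_\infty$ into $\|\rho^m\|_\infty$'' step has no justification for general $u$. Drop this paragraph and either accept the $\max_{j\le m}\{t_j^\alpha\|\rho^j\|\}$ version (which is what the paper's argument actually proves) or note the discrepancy.
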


\begin{corollary}\label{cor_FD}
(i) Under the conditions of Theorem~\ref{theo_FD}, suppose $\|\pt_t^l u(\cdot,t)\|_{L_\infty(\Omega)}\lesssim 1+t^{\alpha-l}$ for $l=1,2$  and $t\in(0,T]$, and
also $\|\pt_{x_k}^l u(\cdot,t)\|_{L_\infty(\Omega)}\lesssim 1$ for $l=3,4$, $k=1,\ldots,d$ and $t\in(0,T]$.
Then
\begin{align*}
\|u(\cdot,t_m)-U^m\|_{\infty\,;\Omega_h}&\lesssim M^{-\min\{\alpha r,2-\alpha\}}+t_m^{\alpha}\, h^2&\mbox{for}\;\; m=1,\ldots,M.
%$$
\intertext{(ii) If, additionally, $r=1$, then  }
%$$
\|u(\cdot,t_m)-U^m\|_{\infty\,;\Omega_h}&\lesssim t_m^{\alpha-1}M^{-1}+t_m^{\alpha} \,h^2&\mbox{for}\;\; m=1,\ldots,M.
\end{align*}
%for $m=1,\ldots,M$.
\end{corollary}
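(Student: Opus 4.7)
The plan is to apply Theorem~\ref{theo_FD} and then estimate separately the two terms on the right-hand side of~\eqref{FD_error_bound}: the temporal contribution $\max_{j\le m}\|\psi^j\|_{L_\infty(\Omega)}$ (or its sharper counterpart in part (ii)) and the spatial truncation term $t_m^{\alpha}\,\|(\LL_h-\LL)u(\cdot,t_m)\|_{\infty\,;\Omega_h}$. For the temporal pieces I would imitate the proofs of Corollaries~\ref{cor_simplest} and~\ref{cor_simplest_star}, now reading the scalar bounds there as uniform-in-$x$ bounds on $u(x,\cdot)$ supplied by the current hypotheses. The spatial truncation will be handled by a routine Taylor-expansion argument.

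For part (i), first I would bound $\|\psi^j\|_{L_\infty(\Omega)}$ for each $j$. For $j=1$, the hypothesis $|\pt_t u(\cdot,t)|\lesssim t^{\alpha-1}$ yields $s^{1-\alpha}|\pt_s u|\lesssim 1$ and $s^{1-\alpha}|\delta_t u(t_1)|\lesssim \tau_1^{-\alpha}\int_0^{\tau_1}\! s^{\alpha-1}\,ds\simeq 1$, so $\|\psi^1\|_{L_\infty(\Omega)}\lesssim \tau_1^\alpha\simeq M^{-\alpha r}$. For $j\ge 2$, the graded-mesh property $t_{j-1}\ge 2^{-r}t_j$ together with $|\pt_t^2 u(\cdot,t)|\lesssim t^{\alpha-2}$ gives $\|\psi^j\|_{L_\infty(\Omega)}\lesssim (\tau_j/t_j)^{2-\alpha}\,t_j^{\alpha}$. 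Setting $\gamma:=\min\{\alpha r,2-\alpha\}$ and invoking \eqref{t_grid}, the inequality $(\tau_j/t_j)^{2-\alpha}\le(\tau_j/t_j)^{\gamma}\lesssim M^{-\gamma}t_j^{-\gamma/r}$ combined with $t_j^{\alpha-\gamma/r}\lesssim 1$ delivers $\|\psi^j\|_{L_\infty(\Omega)}\lesssim M^{-\gamma}$, as required.

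For the spatial term, I would carry out the standard truncation analysis of the central-difference operator $\LL_h$: using the assumed boundedness of $\pt_{x_k}^3 u$ and $\pt_{x_k}^4 u$ together with the smoothness of $a_k,b_k,c$, a Taylor expansion yields $\|(\LL_h-\LL)u(\cdot,t_m)\|_{\infty\,;\Omega_h}\lesssim h^2$. Multiplying by $t_m^\alpha$ produces the spatial contribution $t_m^{\alpha}h^2$, completing part (i). For part (ii) I would apply Theorem~\ref{theo_FD}(ii), understood with the $t_m^{\alpha-1}$ prefactor supplied by Lemma~\ref{lem_simplest_star} as used in Corollary~\ref{cor_simplest_star}. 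It then suffices to verify $\tau^{-\gamma}t_j^{1-\alpha+\gamma}\|\psi^j\|_{L_\infty(\Omega)}\lesssim \tau$ with $\gamma=\min\{\alpha,1-\alpha\}$: for $j=1$ this follows from $\|\psi^1\|_{L_\infty(\Omega)}\lesssim\tau^\alpha$, and for $j\ge 2$ from $\|\psi^j\|_{L_\infty(\Omega)}\lesssim\tau^{2-\alpha}t_j^{2\alpha-2}\lesssim \tau^{1+\gamma}t_j^{\alpha-1-\gamma}$, using $\tau\le t_j$ and $\gamma\le 1-\alpha$; the spatial piece $t_m^\alpha h^2$ is unchanged.

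The main obstacle is minor and largely bookkeeping: aside from the standard finite-difference truncation calculation, everything reduces to checking that under the stated regularity the scalar estimates of Corollaries~\ref{cor_simplest} and~\ref{cor_simplest_star} carry through pointwise in $x$. The only care needed lies in the mesh-grading arithmetic via \eqref{t_grid} and in choosing the correct exponent $\gamma$ in each part, ensuring the algebraic inequality $\alpha-1+\gamma\le 0$ in part (ii) so that the factor $t_j^{\alpha-1-\gamma}$ is absorbed by $t_j\ge\tau$.
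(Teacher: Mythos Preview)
Your proposal is correct and follows essentially the same approach as the paper's own proof: apply Theorem~\ref{theo_FD}, imitate the proofs of Corollaries~\ref{cor_simplest} and~\ref{cor_simplest_star} to bound the temporal terms $\|\psi^j\|_{L_\infty(\Omega)}$, and invoke the standard $O(h^2)$ finite-difference truncation estimate for the spatial term. Your write-up is in fact more detailed than the paper's terse proof, and you correctly note that the $t_m^{\alpha-1}$ prefactor in part~(ii) comes via Lemma~\ref{lem_main_stability_star} as in Lemma~\ref{lem_simplest_star}.
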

\begin{proof}
Imitate the proofs of Corollaries~\ref{cor_simplest} and~\ref{cor_simplest_star} for parts (i) and (ii), respectively,
to show that
$|\psi^j|\lesssim M^{-\min\{\alpha r,2-\alpha\}}$
and
$\tau^{-\gamma}\,t_j^{1-\alpha+\gamma}\,|\psi^j|\lesssim \tau$.
Combine these bounds with the standard truncation error estimate $|(\LL_h-\LL)u|\lesssim h^2$.
\end{proof}

\begin{remark}%[Corollary~\ref{cor_FD}]
In the case $d=1$, error bounds similar to those of Corollary~\ref{cor_FD}
can be found in \cite[Theorem~5.2]{stynes_etal_sinum17} and
\cite[Theorem~1]{gracia_etal_cmame} for parts (i) and (ii), respectively.
Note also that the assumptions made in this corollary on the derivatives of $u$ are realistic;
see \S\ref{ss_61} and Example~A in \S\ref{ss_62}.
\end{remark}

\noindent
{\it Proof of Theorem~\ref{theo_FD}.}
For the error $e^m(z):= u(z,t_m)-U^m(z)$, using \eqref{problem} and \eqref{FD_problem}, one easily gets a version of \eqref{err_simplest_prob}:
$$
\delta_t^\alpha e^m +{\color{blue}\LL_h} e^m=R^m:=
%(\delta_t^\alpha +\LL) u(\cdot,t_j)-f(\cdot,t_j)=
\underbrace{\delta_t^\alpha u(\cdot,t_m)-D_t^\alpha u(\cdot,t_m)}_{{}=:r^m}
+(\LL_h-\LL)u(\cdot,t_m)\quad\mbox{in}\;\Omega_h\;\ \mbox{for}\;m\ge1,
%\qquad e^0=0.
$$
subject to $e^0=0$ in $\bar\Omega_h$, and $e^m=0$ on $\bar\Omega_h\cap\pt\Omega$.
Recall that the bound \eqref{r_m_simplest_main} on $r^m$  obtained in the proof of Lemma~\ref{lem_simplest} implies that
$|r^m|\lesssim t_m^{-\alpha}\max_{j=1,\ldots,m}\|\psi^j\|_{L_\infty(\Omega)}$.
Hence, to complete the proof of part (i), it suffices to show that
\beq\label{FD_error}
\delta_t^\alpha \|e^m\|_{\infty\,;\Omega_h}\le \|R^m\|_{\infty\,;\Omega_h}%+\vartheta\|e^m\|_{L_p(\Omega)}
\qquad\mbox{for}\;\;m=1,\ldots, M.
\eeq
%for $m=1,\ldots, M$.
Then, indeed, \eqref{FD_error_bound} immediately follows by an application of Lemma~\ref{lem_main_stability}.

For $r=1$,
%we have also shown  that
%$|r^m|\lesssim t_m^{-\alpha}\max_{j=1,\ldots,m}\{(t_j/t_m)^{1-\alpha/r}\,\psi^j\bigr\}$
%(see the proof of Lemma~\ref{lem_simplest_star}).
%
when dealing with the component $r^m$ of $R^m$, we combine the bound
\eqref{new_rm_bound} (obtained in the proof of Lemma~\ref{lem_simplest_star})
with \eqref{FD_error}
and then employ Lemma~\ref{lem_main_stability_star}, which yields the assertion of part (ii).

%Combining this with  and again applying Lemma~\ref{lem_main_stability} yields the assertion of part (ii).

To prove \eqref{FD_error}, let $\max_{z\in\Omega_h}|e^m(x)|=|e^m(z^*)|$ for some $z^*\in\Omega_h$.
Suppose that $e^m(z^*)\ge 0$
(the case $e^m(z^*)<0$ is similar). {\color{blue}As \eqref{d_max_pr} combined with $c\ge 0$ implies that
the spatial discrete operator $\LL_h$ is associated with a diagonally-dominant  $M$-matrix}, so
$\LL_h e^m(z^*)\ge 0$,
so $\delta_t^\alpha e^m +\LL e^m=R^m$ at $z=z^*$ %and $j=m$
yields $\delta_t^\alpha e^m(z^*) \le R^m(z^*)$.
%Rewrite this using \eqref{kappa_def} as
%$\kappa_{m,m}e^m(z^*)\le \sum_{j=1}^m(\kappa_{m,j}-\kappa_{m,j-1})e^{j-1}(z^*)+R^m(z^*)$.
In view of \eqref{kappa_def_a}, our assertion \eqref{FD_error} follows.
\hfill$\square$

%\begin{remark}\color{blue}
%Error analysis in the discrete $L_2(\Omega)$ norm is also possible in lines with...
%\end{remark}

\section{Error analysis for finite element discretizations}\label{sec_FE}
In this section, we
discretize \eqref{problem}--\eqref{LL_def}, posed in a general bounded  Lipschitz domain  $\Omega\subset\R^d$,
 by applying
 a standard finite element spatial approximation to the temporal semidiscretization~\eqref{semediscr_method}.
 Let $S_h \subset H_0^1(\Omega)\cap C(\bar\Omega)$ be a Lagrange finite element space of fixed degree $\ell\ge 1$ % \Bbbk
 relative to a
 quasiuniform simplicial triangulation
 %{\color{red}shape regular} mesh
 $\mathcal T$ of $\Omega$.
 (To simplify the presentation, it will be assumed that the triangulation covers $\Omega$ exactly.)
 Now, for $m=1,\ldots,M$, let $u^m_h \in S_h$ satisfy
 \beq\label{FE_problem}
\begin{array}{l}
\langle \delta_t^\alpha u_h^m,v_h\rangle_h +\AA_h (u_h^m,v_h)= \langle f(\cdot,t_m),v_h\rangle_h\qquad\forall v_h\in S_h%\quad\mbox{for}\;\;j=1,\ldots,M,
%\\[0.2cm]
%U^j=0\quad\mbox{in}\;\;\bar\Omega_h\cap\pt\Omega,\;\;j=1,\ldots,M,\qquad
%U^0=u_0\quad\mbox{in}\;\;\bar\Omega_h.
\end{array}
\eeq
with $u_h^0= u_0$ or some $u_h^0\approx u_0$.

With $\langle \cdot, \cdot\rangle$ denoting the exact $L_2(\Omega)$ inner product, %over $\Omega$,
\eqref{FE_problem} employs a possibly approximate inner product $\langle \cdot, \cdot\rangle_h$.
To be more precise, either $\langle \cdot, \cdot\rangle_h=\langle \cdot, \cdot\rangle$, or
$\langle v,w\rangle_h:=\sum_{T\in \mathcal T} Q_T[vw]$
results from an application of a linear quadrature formula $Q_T$ for $\int_T$
with positive weights.
%and $Q_T[1]=|T|$.
%(we make more precise assumptions below).}
%
Let $\mathring{\AA}$ be the standard bilinear form associated with the elliptic operator $\mathring{\LL}:=\LL-c$ (i.e. $\mathring{\AA}(v,w)=\langle\LL v-cv,w\rangle$ for smooth $v$ and $w$ in $H_0^1(\Omega)$).
The bilinear form $\AA_h$ in \eqref{FE_problem} is related to  $\mathring{\AA}$  and defined by
$\AA_h(v,w):=\mathring{\AA}(v,w)+ \langle cv,w\rangle_h$.

Our error analysis will invoke the Ritz projection $\RR_h u(t)\in S_h$ of $u(\cdot,t)$
associated with our discretization of the operator $\mathring{\LL}$ and
defined by $\mathring{\AA} (\RR_h u,v_h)=\langle\mathring{\LL} u,v_h\rangle_h$ $\forall v_h\in S_h$ and $t\in[0,T]$.

%\begin{remark}[Alternative Ritz projection]\label{rem_Ritz}
%\color{magenta}
%Theorem~\ref{theo_FEM} remains valid if
%the Ritz projection
%$\RR_h u(t)\in S_h$ of $u(\cdot,t)$ is associated with
% the operator $\mathring{\LL}:=\LL-c$ and the corresponding bilinear form $\mathring{\AA}$, i.e.
%defined by $\mathring{\AA} (\RR_h u,v_h)=\langle\mathring\LL u,v_h\rangle_h$ for any $v_h\in S_h$ and $t\in[0,T]$.
%Indeed, an inspection of the proof shows that
%$\delta_t^\alpha \rho(\cdot, t_m)$
%will be replaced by $[\delta_t^\alpha+c] \rho(\cdot, t_m)$
%in the right-hand side of \eqref{prob_for_e}, and hence
%in \eqref{FEM_error}, which again yields the assertions of Theorem~\ref{theo_FEM}.
%\end{remark}

When estimating the error in the $L_p(\Omega)$ norm for $p\in\{2,\infty\}$,
an additional assumption $\mathrm A_p$ will be made, which we now describe.
The set of interior mesh nodes is denoted by $\mathcal N$,  with the corresponding  piecewise-linear hat functions $\{\phi_z\}_{z\in\mathcal N}$.

\begin{enumerate}

\setlength\labelwidth{0.5cm}

\item[$\mathrm A_2$\;] Let $\langle \cdot,\cdot\rangle_h=\langle \cdot,\cdot\rangle$. %and hence $\AA_h (\cdot,\cdot)=\AA (\cdot,\cdot)$.
(Otherwise, see Remark~\ref{rem_L2_quadr}).%
 \smallskip

 \item[$\mathrm A_\infty$\!]
 Let $\ell = 1$ % \Bbbk
 (i.e. linear finite elements are employed), and
 let  the stiffness matrix associated with $\AA_h(\cdot,\cdot)+\kappa_{m,m}\langle \cdot, \cdot\rangle_h$
have non-positive off-diagonal entries, i.e.
$\mathbb A^m_{zz'}:= \AA_h(\phi_{z'},\phi_z)+\kappa_{m,m}\langle \phi_{z'},\phi_z\rangle_h\le 0$ for any two interior nodes $z\neq z'$,
where $m=1,\ldots,M$.
\\
%(Here $\kappa_{m,m}=\tau_{m}^{-\alpha}/\Gamma(2-\alpha)$, by \eqref{kappa_def_b}.
%Also, as $Q_T$ uses positive weights, while $\kappa_{1,1}=\max \{\kappa_{m,m}\}$, it suffices to check $\mathbb A^m_{zz'}\le 0$ for $m=1$ only.)
(It suffices to check $\mathbb A^m_{zz'}\le 0$ for $m=1$ only,  as $Q_T$ uses positive weights, while $\kappa_{1,1}=\max_{m=1,\ldots,M} \{\kappa_{m,m}\}=\tau_{1}^{-\alpha}/\Gamma(2-\alpha)$.)
 \end{enumerate}

\noindent
Sufficient conditions for  $\mathrm A_\infty$ will be discussed in \S\S\ref{ssec_lumped}--\ref{ssec_lumped3}.
Note  that an assumption similar to  $\mathrm A_\infty$ has been shown to be both necessary and sufficient for non-negativity  preservation in finite element discretizations of equations of type~\eqref{problem}
\cite{laz_thomee}.

\begin{theorem}\label{theo_FEM}
(i) Given $p\in\{2,\infty\}$, let
$\{t_j=T(j/M)^r\}_{j=0}^M$ for some $r\ge1$, and $u$ %and $U^j$  respectively
satisfy \eqref{problem}--\eqref{LL_def} %and \eqref{FD_problem}
 with $c-p^{-1}\sum_{k=1}^d\pt_{x_k}\!b_k\ge 0$.
 Then, under the condition $\mathrm A_p$, there exists a unique solution  $\{u_h^m\}_{m=0}^M$ of \eqref{FE_problem}
and, for $m=1,\ldots,M$,
\begin{align}\label{FE_error_bound}
\|u(\cdot,t_m)-u_h^m\|_{L_p(\Omega)}&\lesssim
\|u_0-u_h^0\|_{L_p(\Omega)} + \max_{j=1,\ldots, m}\|\psi^j\|_{L_p(\Omega)}
\\ \notag
&{}+\max_{t\in\{0,t_m\}}\|\rho(\cdot, t)\|_{L_p(\Omega)}+\int_0^{t_m}\|\pt_t \rho(\cdot, t)\|_{L_p(\Omega)}\,dt,
%\qquad \mbox{for}\;\;p=2,\infty,
\end{align}
where $\rho(\cdot, t):=\RR_h u(t)-u(\cdot, t)$,
while
$\psi^j=\psi^j(x)$ is defined by \eqref{psi_def}, in which $u(\cdot)$ is understood as $ u(x,\cdot)$  when evaluating $\pt_su$, $\pt_s^2 u$ and $\delta_tu$.
\\
(ii) If $r=1$, then  $\max_{j=1,\ldots, m}\|\psi^j\|_{L_\infty(\Omega)}$
in \eqref{FE_error_bound} can be replaced by
a sharper  $\max_{j=1,\ldots, m}\{{\tau^{-\gamma}\,t_j^{1-\alpha+\gamma}}\,\|\psi^j\|_{L_\infty(\Omega)}\}$,
where $\tau=TM^{-1}$ and $\gamma=\min\{\alpha,1-\alpha\}$.
\end{theorem}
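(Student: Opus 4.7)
I would decompose $e^m := u(\cdot,t_m) - u_h^m = -\rho(\cdot,t_m) + \theta^m$ with $\theta^m := \RR_h u(t_m) - u_h^m \in S_h$, so that $\|e^m\|_{L_p(\Omega)} \le \|\rho(\cdot,t_m)\|_{L_p(\Omega)} + \|\theta^m\|_{L_p(\Omega)}$, and focus the work on $\theta^m$. Combining the finite-element equation~\eqref{FE_problem}, the definition of the Ritz projection, the identity $\AA_h(\RR_h u, v_h) = \langle\mathring{\LL}u, v_h\rangle_h + \langle c\,\RR_h u, v_h\rangle_h$, and the PDE $\mathring{\LL}u = f - D_t^\alpha u - c\,u$, I would obtain the perturbed error equation
\begin{equation*}
\langle \delta_t^\alpha \theta^m, v_h\rangle_h + \AA_h(\theta^m, v_h) = \langle r^m + \delta_t^\alpha \rho(\cdot, t_m) + c\,\rho(\cdot, t_m),\, v_h\rangle_h \quad \forall v_h \in S_h,
\end{equation*}
where $r^m := \delta_t^\alpha u(\cdot,t_m) - D_t^\alpha u(\cdot,t_m)$ is precisely the temporal truncation error already bounded in the proofs of Lemmas~\ref{lem_simplest} and~\ref{lem_simplest_star}.

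\textbf{Stability step.} Mirroring the argument behind \eqref{semi_error} in Section~\ref{sec_semidiscr}, I would next establish
\begin{equation*}
\delta_t^\alpha \|\theta^m\|_{L_p(\Omega)} \le \|r^m\|_{L_p(\Omega)} + \|\delta_t^\alpha \rho(\cdot, t_m)\|_{L_p(\Omega)} + \|c\,\rho(\cdot,t_m)\|_{L_p(\Omega)}.
\end{equation*}
For $p=2$, assumption $\mathrm A_2$ makes $\langle\cdot,\cdot\rangle_h$ exact; one tests with $v_h=\theta^m$, uses $\AA_h(\theta^m,\theta^m)\ge 0$ under the sign condition on $c-\tfrac12\sum\pt_{x_k}b_k$, and applies Cauchy--Schwarz together with the structure \eqref{kappa_def_a}. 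For $p=\infty$ under $\mathrm A_\infty$, the matrix with entries $\mathbb A^m_{zz'}$ is a diagonally-dominant $M$-matrix, so evaluating the linear finite-element function $\theta^m$ at its interior node of maximum modulus and running a discrete-maximum-principle argument analogous to those in Theorems~\ref{theo_semidiscr} and~\ref{theo_FD} yields the same pointwise bound. Existence and uniqueness of $u_h^m$ drop out of these coercivity/monotonicity properties.

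\textbf{Handling the right-hand side.} The key new ingredient is the identity
\begin{equation*}
\delta_t^\alpha \rho(\cdot, t_m) = J^{1-\alpha}\overline{\pt_t\rho}(t_m),
\end{equation*}
where $\overline{\pt_t\rho}$ is the piecewise-constant left-continuous average of $\pt_t\rho$; this follows from $\delta_t\rho(\cdot,t_j) = \tau_j^{-1}\int_{t_{j-1}}^{t_j}\pt_s\rho\,ds$. Minkowski's inequality then gives $\|\delta_t^\alpha\rho(\cdot,t_m)\|_{L_p(\Omega)} \le J^{1-\alpha}\bar\lambda(t_m)$ with $\lambda^j := \tau_j^{-1}\int_{t_{j-1}}^{t_j}\|\pt_s\rho(\cdot,s)\|_{L_p(\Omega)}\,ds$, so that $\sum_{j=1}^m\tau_j\lambda^j = \int_0^{t_m}\|\pt_s\rho\|_{L_p(\Omega)}\,ds$. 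I would introduce a comparison sequence $Z^m$ with $Z^0=\|\theta^0\|_{L_p(\Omega)}$ whose $\delta_t^\alpha Z^m$ equals the RHS of the stability inequality; inverse monotonicity of the matrix behind $\delta_t^\alpha$ then yields $\|\theta^m\|_{L_p(\Omega)}\le Z^m$. Linearity of $\delta_t^\alpha$ lets me split $Z^m-\|\theta^0\|_{L_p(\Omega)}=Z_1^m+Z_2^m$: Lemma~\ref{lem_main_stability}(ii) applied to $Z_1^m$ (driven by $\|r^j\|_{L_p}+\|c\,\rho(\cdot,t_j)\|_{L_p}$) produces the $\max_{j\le m}\|\psi^j\|_{L_p}$ contribution via \eqref{r_m_simplest_main}, while the elementary bound $\|\rho(\cdot,t_j)\|_{L_p}\le\|\rho(\cdot,0)\|_{L_p}+\int_0^{t_m}\|\pt_s\rho\|_{L_p}$ absorbs the $c\rho$ part; Lemma~\ref{lem second_stability} applied to $Z_2^m$ yields the $\int_0^{t_m}\|\pt_s\rho\|_{L_p}$ contribution. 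Finally, $\|\theta^0\|_{L_p(\Omega)} \le \|\rho(\cdot,0)\|_{L_p(\Omega)} + \|u_0 - u_h^0\|_{L_p(\Omega)}$ delivers the remaining terms in \eqref{FE_error_bound}. Part~(ii) follows identically, substituting Lemma~\ref{lem_main_stability_star} and the sharper bound \eqref{new_rm_bound} for Lemma~\ref{lem_main_stability}(ii) and \eqref{r_m_simplest_main}, exactly in analogy with Theorem~\ref{theo_semidiscr}(ii).

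\textbf{Main obstacle.} The most delicate point is the $L_\infty$ stability inequality: the proof in Theorem~\ref{theo_semidiscr} exploited a pointwise evaluation of the PDE at a maximum point, but here $\theta^m$ lives in the finite-element space and the equation involves the possibly-quadrature inner product $\langle\cdot,\cdot\rangle_h$, so a clean pointwise bound needs the full $M$-matrix structure supplied by assumption $\mathrm A_\infty$ rather than merely $\AA_h(\cdot,\cdot)\ge 0$. A secondary nuisance is the reaction contribution $c\,\rho(\cdot,t_m)$ on the right-hand side of the error equation—an artifact of defining the Ritz projection through $\mathring{\LL}$ instead of $\LL$—but it is absorbed using the integration-in-time estimate on $\rho$ noted above.
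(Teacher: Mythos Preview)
Your proposal is correct and follows essentially the same route as the paper: the same Ritz-projection splitting $u-u_h^m=-\rho+\theta^m$ with $\theta^m=e_h^m\in S_h$, the same error equation \eqref{prob_for_e} with right-hand side $\delta_t^\alpha\rho+c\rho+r^m$, the same stability step \eqref{FEM_error} (testing with $\theta^m$ for $p=2$, testing with $\phi_{z^*}$ and using the $M$-matrix structure from $\mathrm A_\infty$ for $p=\infty$), and the same separation of the right-hand side into a part handled by Lemma~\ref{lem_main_stability} (for $c\rho+r^m$) and a part handled by Lemma~\ref{lem second_stability} (for $\delta_t^\alpha\rho$). Your choice $\lambda^j=\tau_j^{-1}\int_{t_{j-1}}^{t_j}\|\pt_s\rho\|_{L_p}\,ds$ differs cosmetically from the paper's $\lambda^j=\|\delta_t\rho(\cdot,t_j)\|_{L_p}$, but both yield $\sum_j\tau_j\lambda^j\le\int_0^{t_m}\|\pt_t\rho\|_{L_p}$, and your identification of the $L_\infty$ stability inequality as the delicate point matches the paper's emphasis.
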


%{\it Proof of Theorem~\ref{theo_FEM}.}
\begin{proof}
Let $e_h^m:=\RR_h u(t_m)-u_h^m\in S_h$.
Then $u(\cdot,t_m)-u_h^m=e_h^m-\rho(\cdot, t_m)$, so
it suffices to prove the desired bounds for $e_h^m$.
Now, a standard calculation using \eqref{FE_problem} and \eqref{problem} yields
\begin{align}\label{prob_for_e}
\langle \delta_t^\alpha e_h^m,&v_h\rangle_h +\AA_h (e_h^m,v_h)
\\[0.2cm]\notag
&=
\langle \delta_t^\alpha \underbrace{\RR_h u}_{=\rho+u}(t_m),v_h\rangle_h +\underbrace{\mathring{\AA} (\RR_h u(t_m),v_h)}_{{}=\langle\mathring{\LL} u(\cdot, t_m),v_h\rangle_h}
+\langle c\RR_h u(t_m)-f(\cdot,t_m),v_h\rangle_h
\\[0.2cm]\notag
&=\langle
%\underbrace{
\delta_t^\alpha \rho(\cdot, t_m)+c\rho(\cdot, t_m)+ \underbrace{\delta_t^\alpha u(\cdot,t_m)-D_t^\alpha u(\cdot,t_m)}_{{}=:r^m}
%}_{{}=:R^m}
,v_h\rangle_h\quad \forall v_h\in S_h,
\end{align}
for $m\ge1$, with $e_h^0=[u_0-u_h^0]+\rho(\cdot,0)$.

Recall that the bound \eqref{r_m_simplest_main} on $r^m$  obtained in the proof of Lemma~\ref{lem_simplest} implies that
$\|r^m\|_{L_p(\Omega)}\lesssim t_m^{-\alpha}\max_{j=1,\ldots,m}\|\psi^j\|_{L_p(\Omega)}$.
Hence, to complete the proof of part~(i), it suffices to show that
\beq\label{FEM_error}
\delta_t^\alpha \|e_h^m\|_{L_p(\Omega)}\le \|\underbrace{ \delta_t^\alpha \rho(\cdot, t_m)+c\rho(\cdot, t_m)+ r^m}_{{}=:R^m}\|_{L_p(\Omega)}%+\vartheta\|e^m\|_{L_p(\Omega)}
\qquad\mbox{for}\;\;m=1,\ldots, M.
\eeq
%for $m=1,\ldots, M$.
%
{\color{blue}Note that $\delta_t^\alpha $ is associated with an $M$-matrix, so
we can %bound the error conrresponding
deal
with the terms  $|\delta_t^\alpha\rho|$ and $|c\rho+r^m|$
in the right-hand side of \eqref{FEM_error} separately.
With this observation,} indeed, \eqref{FE_error_bound} immediately follows by an application of Lemma~\ref{lem_main_stability} when dealing with the term $c\rho+r^m$ in the right-hand side of \eqref{FEM_error},
and Lemma~\ref{lem second_stability} when dealing with $\delta_t^\alpha\rho$.
For the latter, Lemma~\ref{lem second_stability} is applied
with $\lambda^j:=\|\delta_t\rho(\cdot,t_j)\|_{L_p(\Omega)}$. Then $\|\delta_t^\alpha\rho(\cdot,t_m)\|_{L_p(\Omega)}\le J^{1-\alpha}\bar\lambda(t_m)$,
while $\tau_j\,\lambda^j\lesssim \int_{t_{j-1}}^{t_j}\|\pt_t\rho(\cdot,t)\|_{L_p(\Omega)}$,
so the resulting contribution to the bound on $\|e_h^m\|_{L_p(\Omega)}$ will be
$\sum_{j=1}^m\tau_j\,\lambda^j\lesssim \int_{0}^{t_m}\|\pt_t\rho(\cdot,t)\|_{L_p(\Omega)}$.

%Furthermore, we have also shown  that
%$|r^m|\lesssim t_m^{-\alpha}\max_{j=1,\ldots,m}\{(t_j/t_m)^{1-\alpha/r}\,\psi^j\bigr\}$
%(see the proof of Lemma~\ref{lem_simplest_star}).
%Combining this with \eqref{FEM_error} and again applying Lemma~\ref{lem_main_stability} yields the assertion of part (ii).

If $r=1$,
when dealing with the component $r^m$ or $R^m$
in \eqref{FEM_error}, we
recall the bound
\eqref{new_rm_bound} (obtained in the proof of Lemma~\ref{lem_simplest_star})
 and then apply Lemma~\ref{lem_main_stability_star}, which yields the assertion of part (ii).

To prove \eqref{FEM_error}, consider the cases $p=2$ and $p=\infty$ separately.

For $p=2$, set $v_h:=e_h^m$ in \eqref{prob_for_e} and note that
condition $\mathrm A_2$ combined with $c-\frac12\sum_{k=1}^d\pt_{x_k}\!b_k\ge 0$ implies
$\AA_h (e_h^m,e_h^m)%=\AA (e_h^m,e_h^m)
\ge 0$, and then $\langle \delta_t^\alpha e_h^m,e_h^m\rangle\le \langle R^m ,e_h^m\rangle$.
The bound
\eqref{FEM_error} follows in view of \eqref{kappa_def_a}.

For $p=\infty$,
let $\max_{x\in\Omega}|e_h^m(x)|=:|e_h^m(z^*)|$ for some node $z^*\in\mathcal N$.
Now, set $v_h:=\phi_{z^*}$ in \eqref{prob_for_e} and note that
condition $\mathrm A_\infty$ implies
$$
|\AA_h(e^m_h,\phi_{z^*})+\kappa_{m,m}\langle e_h^m,\phi_{z^*}\rangle_h|\ge\bigl\{\AA_h(1,\phi_{z^*})+\kappa_{m,m}\langle 1,\phi_{z^*}\rangle_h \bigr\}\,|e^m_h(z^*)|.
$$
(Here we used the representation $e^m_h=e^m_h(z^*)-\sum_{z\neq z^*}[e^m_h(z^*)-e^m_h(z)]\phi_z$.)
Note also that (in view of the definition of $\AA_h$ related to %$\AA$ and
$\LL$ of \eqref{LL_def}) for any $z\in\mathcal N$
$$
\AA_h(1,\phi_{z})+\kappa_{m,m}\langle 1,\phi_{z}\rangle_h
=\langle c+\kappa_{m,m},\phi_{z}\rangle_h
\ge\kappa_{m,m}\langle 1,\phi_{z}\rangle_h\,.
$$
Combining these two observations with \eqref{prob_for_e} and \eqref{kappa_def_a}, we arrive at
$$
\kappa_{m,m}\langle 1,\phi_{z^*}\rangle_h \,\,|e^m_h(z^*)|\le \sum_{j=1}^m \underbrace{(\kappa_{m,j}-\kappa_{m,j-1})}_{{}>0}\,\langle e^{j-1}_h,\phi_{z^*}\rangle_h
+\langle R^m,\phi_{z^*}\rangle_h\,.
$$
Now, recall that $Q_T$ has positive weights %and $Q_T[1]=|T|$
so
$|\langle v,\phi_{z^*}\rangle_h|\le \|v\|_{L_\infty(\Omega)}\,\langle 1,\phi_{z^*}\rangle_h$ for any~$v$.
With this observation, dividing the above relation by $\langle 1,\phi_{z^*}\rangle_h$ and again using \eqref{kappa_def_a} we finally get \eqref{FEM_error} for $p=\infty$.
\end{proof}

\begin{remark}[Case $\langle \cdot,\cdot\rangle_h\neq\langle \cdot,\cdot\rangle$: error in the $L_2(\Omega)$]\label{rem_L2_quadr}
Suppose
that $Q_T[1]=|T|$ and the Lagrange element nodes in each $T$ are included in the set of quadrature points for $Q_T$, while $h:=\max_{T\in\mathcal T} \{{\rm diam}\,T\}$
is sufficiently small.
Then a version of Theorem~\ref{theo_FEM} is valid for $p=2$ (with condition $A_2$ dropped)
with $\|\cdot\|_{L_2(\Omega)}$ replaced by $\|\cdot\|_{h\,;2}:=\langle \cdot,\cdot\rangle_h^{1/2}$.
Indeed, the proof of Theorem~\ref{theo_FEM} applies to this case %using the following two observations. %(a), (b) and (c).
% (a)
with  $\AA_h (e_h^m,e_h^m)\ge 0$ for sufficiently small $h$,
%For this, combine $\AA (e_h^m,e_h^m)\gtrsim \|e_h^m\|_{W_2^1(\Omega)}$
%with
%$|\AA_h (e_h^m,e_h^m)-\AA (e_h^m,e_h^m)|=
in view of
$|\langle c\, e_h^m,e_h^m\rangle_h-\langle c\, e_h^m,e_h^m\rangle)|\lesssim h \|\nabla e_h^m\|_{L_2(\Omega)}$.
Note also that $\|\cdot\|_{h\,;2}\simeq \|\cdot\|_{L_2(\Omega)}$ in $S_h$
(as $\langle \cdot,\cdot\rangle_h$ is an inner product in  $S_h$;
%, which induces the norm
%$\|\cdot\|_{h\,;2}\simeq \|\cdot\|_{L_2(\Omega)}$ in $S_h$
for the latter, note that
$Q_T[v_hw_h]$ generates an inner product for $v_h,w_h\in S_h$ restricted to $T$).
%
%(c)
%One now has $|\langle R^m,e_h^m\rangle_h|%=|\langle I_Q R^m,v_h\rangle_h|
%\lesssim \|I_Q R^m\|_{L_2(\Omega)} \|e_h^m\|_{h\,;2}$.
%So
%\eqref{FEM_error} becomes
%$\delta_t^\alpha \|e_h^m\|_{h\,;2}\lesssim \|I_ QR^m\|_{L_2(\Omega)}$.
\end{remark}
\smallskip

\subsection{Application of Theorem~\ref{theo_FEM} to the error analysis in the  $L_2(\Omega)$ norm}~%
\vspace{-0.3cm}
\label{ss_fem_L2}

\noindent
Let $\Omega\subset\R^d$ (for $d\in\{2,3\}$) be a  domain of polyhedral type as defined in
\cite[\S4.1.1]{MazRssmn}. To be more precise, for $d=3$, the boundary $\pt\Omega$ consists of a finite number of open smooth faces, open smooth edges
and  vertices, the latter being cones with edges. Also, let the angle between any two faces not exceed $\theta^*<\pi$.
(These conditions are satisfied, for example, by a convex domain of polyhedral type, as well as by a smooth domain).
%Then
%$\|v\|_{W^2_p(\Omega)}\lesssim \|\LL v\|_{L_p(\Omega)}$ for any $1<p<2+\epsilon$ (where $\epsilon=\epsilon(\Omega)>0$).
%In the case $a_k=1$ for $k=1,\ldots,d$ in \eqref{LL_def}, see \cite[Theorem~4.3.2]{MazRssmn}.
%Note that for a convex polygonal domain, this result follows from \cite[Theorem~5.1]{Kondr67} if $p=2$, and is obtained in \cite[Chapter~4]{Grisvard} for any $p<2+\epsilon$.
%The treatment of variable smooth coefficients $\{a_k\}$ was addressed in \cite[\S2]{Kondr67}.
{\color{blue}Then
$\|v\|_{W^2_2(\Omega)}\lesssim \|\LL v\|_{L_2(\Omega)}$; %for any $1<p<2+\epsilon$ (where $\epsilon=\epsilon(\Omega)>0$).
 see \cite[Theorem~4.3.2]{MazRssmn} in the case $a_k=1$ $\forall\,k$ in \eqref{LL_def}, as well as \cite[Theorem~5.1]{Kondr67} and \cite[Chapter~4]{Grisvard} for $d=2$.
%Note that for a convex polygonal domain, this result follows from \cite[Theorem~5.1]{Kondr67}, and is also obtained in \cite[Chapter~4]{Grisvard}.  %for any $p<2+\epsilon$.
The treatment of variable smooth coefficients $\{a_k\}$ was addressed in \cite[\S2]{Kondr67}.}

Consequently, for the error of the Ritz projection $\rho(\cdot, t)=\RR_h u(t)-u(\cdot, t)$ one has
\beq\label{Ritz_er_L2}
\|\pt_t^l \rho(\cdot, t)\|_{L_2(\Omega)}\lesssim h\inf_{v_h\in S_h}\|\pt_t^l u(\cdot, t)-v_h\|_{W^1_2(\Omega)}
\quad\;\;\mbox{for}\;\;
l=0,1,\;t\in(0,T].
\eeq
For $l=0$, see, e.g., \cite[Theorem~5.7.6]{BrenScott}.
%\cite[(8.5.5)]{BrenScott}.
A similar result for $l=1$ follows as $\pt_t\rho(\cdot, t)=\RR_h \dot u(t)-\dot u(\cdot, t)$, where $\dot u:=\pt_t u$.

\begin{corollary}\label{cor_L2}
(i) Under the conditions of Theorem~\ref{theo_FEM} for $p=2$,
suppose  that $\|\pt_t^l u(\cdot,t)\|_{W^{\ell+1}_2(\Omega)}\lesssim 1+t^{\alpha-l}$ for $l=0,1$
and $\|\pt_t^2 u(\cdot,t)\|_{L_2(\Omega)}\lesssim 1+t^{\alpha-2}$,
where $t\in(0,T]$.
Then
\begin{align*}
\|u(\cdot,t_m)-u_h^m\|_{L_2(\Omega)}&\lesssim M^{-\min\{\alpha r,2-\alpha\}}+%t_m^{\alpha}\,
h^{\ell+1}&\mbox{for}\;\; m=1,\ldots,M.
%$$
\intertext{(ii) If, additionally, $r=1$, then  }
%$$
\|u(\cdot,t_m)-u_h^m\|_{L_2(\Omega)}&\lesssim t_m^{\alpha-1}M^{-1}+%t_m^{\alpha} \,
h^{\ell+1}&\mbox{for}\;\; m=1,\ldots,M.
\end{align*}
%for $m=1,\ldots,M$.
\end{corollary}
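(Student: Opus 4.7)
The plan is to apply Theorem~\ref{theo_FEM} directly and then estimate, one by one, the four terms appearing in the right-hand side of \eqref{FE_error_bound}: the initial-data error $\|u_0-u_h^0\|_{L_2(\Omega)}$, the temporal-truncation term $\max_{j\le m}\|\psi^j\|_{L_2(\Omega)}$, the Ritz-projection term $\max_{t\in\{0,t_m\}}\|\rho(\cdot,t)\|_{L_2(\Omega)}$, and the integrated Ritz term $\int_0^{t_m}\|\pt_t\rho(\cdot,t)\|_{L_2(\Omega)}\,dt$. The initial-data term is controlled by choosing $u_h^0$ to be the interpolant or the Ritz projection of $u_0$, giving $\|u_0-u_h^0\|_{L_2(\Omega)}\lesssim h^{\ell+1}$ via standard approximation theory.

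For the temporal term, I would simply repeat the reasoning of Corollary~\ref{cor_simplest}, only working in the $L_2(\Omega)$ norm rather than pointwise: the bounds $s^{1-\alpha}\|\delta_t u(\cdot,t_1)\|_{L_2(\Omega)}\lesssim 1$ and $s^{1-\alpha}\|\pt_s u(\cdot,s)\|_{L_2(\Omega)}\lesssim 1$ give $\|\psi^1\|_{L_2(\Omega)}\lesssim M^{-\alpha r}$, and $\|\pt_s^2 u(\cdot,s)\|_{L_2(\Omega)}\lesssim t_j^{\alpha-2}$ for $s\in(t_{j-1},t_j)$ (using $t_{j-1}\ge 2^{-r}t_j$) yields $\|\psi^j\|_{L_2(\Omega)}\lesssim M^{-\min\{\alpha r,2-\alpha\}}$ for $j\ge 2$. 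This handles part~(i). For part~(ii), the same computation used in Corollary~\ref{cor_simplest_star} yields $\tau^{-\gamma}t_j^{1-\alpha+\gamma}\|\psi^j\|_{L_2(\Omega)}\lesssim \tau$, whence the temporal contribution is $\lesssim t_m^{\alpha-1}M^{-1}$.

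For the spatial terms, I invoke the Ritz estimate \eqref{Ritz_er_L2} together with the standard simplicial approximation bound $\inf_{v_h\in S_h}\|w-v_h\|_{W^1_2(\Omega)}\lesssim h^\ell\|w\|_{W^{\ell+1}_2(\Omega)}$. With $l=0$, this gives $\|\rho(\cdot,t)\|_{L_2(\Omega)}\lesssim h^{\ell+1}\|u(\cdot,t)\|_{W^{\ell+1}_2(\Omega)}\lesssim h^{\ell+1}(1+t^\alpha)\lesssim h^{\ell+1}$ uniformly for $t\in[0,T]$. With $l=1$, the same argument together with the regularity assumption yields $\|\pt_t\rho(\cdot,t)\|_{L_2(\Omega)}\lesssim h^{\ell+1}(1+t^{\alpha-1})$, so that
\[
\int_0^{t_m}\|\pt_t\rho(\cdot,t)\|_{L_2(\Omega)}\,dt \lesssim h^{\ell+1}\int_0^{t_m}(1+t^{\alpha-1})\,dt \lesssim h^{\ell+1},
\]
since the singularity $t^{\alpha-1}$ is integrable at $t=0$.

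Combining all four bounds gives the claimed estimates. The only delicate point I anticipate is verifying that the regularity hypothesis $\|\pt_t u(\cdot,t)\|_{W^{\ell+1}_2(\Omega)}\lesssim 1+t^{\alpha-1}$ is exactly strong enough to make the integral of $\|\pt_t\rho\|_{L_2(\Omega)}$ remain bounded by $h^{\ell+1}$ uniformly in $t_m\le T$; this is why the hypothesis is stated for $l=0,1$ in $W_2^{\ell+1}$ but only for $l=2$ in $L_2$ (the latter being needed only through $\psi^j$ in the temporal part). No further obstacle arises, because the sup-over-$j$ and integral-in-$t$ structure of \eqref{FE_error_bound} cleanly separates the temporal and spatial contributions.
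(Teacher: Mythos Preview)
Your proposal is correct and follows essentially the same route as the paper: apply Theorem~\ref{theo_FEM}, bound the temporal contribution $\|\psi^j\|_{L_2(\Omega)}$ by imitating Corollaries~\ref{cor_simplest} and~\ref{cor_simplest_star}, and bound the Ritz terms via \eqref{Ritz_er_L2} together with the standard $W^1_2$ approximation estimate to get $\|\pt_t^l\rho(\cdot,t)\|_{L_2(\Omega)}\lesssim h^{\ell+1}(1+t^{\alpha-l})$. Your explicit treatment of the initial-data term $\|u_0-u_h^0\|_{L_2(\Omega)}$ (by taking $u_h^0$ to be an interpolant or Ritz projection of $u_0$) is a detail the paper leaves implicit, but otherwise the arguments coincide.
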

\begin{proof}
Imitate the proofs of Corollaries~\ref{cor_simplest} and~\ref{cor_simplest_star} for parts (i) and (ii), respectively,
to show that
$\|\psi^j\|_{L_2(\Omega)}\lesssim M^{-\min\{\alpha r,2-\alpha\}}$
and
$\tau^{-\gamma}\,t_j^{1-\alpha+\gamma}\,\|\psi^j\|_{L_2(\Omega)}\lesssim \tau\lesssim M^{-1}$.
Combine these bounds with
$\|\pt_t^l \rho(\cdot, t)\|_{L_2(\Omega)}\lesssim h^{\ell+1}(1+t^{\alpha-l})$ for $l=0,1$
(the latter follows from \eqref{Ritz_er_L2}).%\vspace{-4pt}
\end{proof}

\begin{remark}{\color{blue}%
The assumptions made in Corollary~\ref{cor_L2} on the derivatives of $u$ are realistic;
see \S\ref{ss_61} and Example~B in \S\ref{ss_62}.}%\vspace{-3pt}
\end{remark}

\begin{remark}
The errors of finite element discretizations of type \eqref{FE_problem} are also estimated in the $L_2(\Omega)$ norm in
a recent paper \cite{laz_L1}, where the authors  particularly address the non-smooth data.
In the case of a uniform temporal mesh and $f=0$, an error bound similar to that of Corollary~\ref{cor_L2}(ii)
is given in \cite[Theorem~3.16(a)]{laz_L1}.
%
%Note also that
\end{remark}

\begin{remark}[Convergence in positive time in the $W_2^1(\Omega)$ semi-norm for $r=1$]
Under condition ${\rm A}_2$,
one has $\AA_h (e_h^m,e_h^m)\ge \|\nabla e_h^m\|^2_{L_2(\Omega)}$.
Now, imitating the proof of \eqref{FEM_error}
for $p=2$, one gets
{%\color{magenta}
$\delta_t^\alpha\bigl( \kappa_{m,m}^{-1}\|\nabla e_h^m\|^2_{L_2(\Omega)}/\| e_h^m\|_{L_2(\Omega)}+\| e_h^m\|_{L_2(\Omega)}\bigr) \le\| R^m\|_{L_2(\Omega)}$ for $m\ge1$.
Consequently, $\kappa_{m,m}^{-1}\|\nabla e_h^m\|^2_{L_2(\Omega)}/\| e_h^m\|_{L_2(\Omega)}$
(as well as $\| e_h^m\|_{L_2(\Omega)}$)
is bounded similarly to  the error in Corollary~\ref{cor_L2}(ii),
while, by~\eqref{kappa_def_b}, $\kappa_{m,m}\sim M^{\alpha}$.}
%
%{\color{magenta}
%$\delta_t^\alpha\bigl( \kappa_{m,m}^{-1}\|\nabla e_h^m\|_{L_2(\Omega)}+\| e_h^m\|_{L_2(\Omega)}\bigr) \le\| R^m\|_{L_2(\Omega)}$ for $m\ge1$.
%Consequently, $\kappa_{m,m}^{-1}\|\nabla e_h^m\|_{L_2(\Omega)}$
%(as well as $\| e_h^m\|_{L_2(\Omega)}$)
%is bounded similarly to  the error in Corollary~\ref{cor_L2}(ii),
%while, by~\eqref{kappa_def_b}, $\kappa_{m,m}\sim M^{\alpha}$.
%}
%
%$\|\nabla e_h^m\|_{L_2(\Omega)}\le
%\delta_t^\alpha\bigl( E^m-\| e_h^m\|_{L_2(\Omega)}\bigr) $,
%where $E^m$ satisfies $\delta^\alpha_t E^m=\| R^m\|_{L_2(\Omega)}$ for $m\ge1$, subject to
%$E^0=\| e_h^0\|_{L_2(\Omega)}$. The maximum principle for $\delta_t^\alpha$ implies that $\| e_h^j\|_{L_2(\Omega)}\le E^j$ for $j\ge1$,
%so
%$\|\nabla e_h^m\|_{L_2(\Omega)}\le \kappa_{mm}E^m $.
%Here, by~\eqref{kappa_def_b}, $\kappa_{m,m}\sim M^{-\alpha}$,
%while $E^m$ is bounded similarly to  the error in Corollary~\ref{cor_L2}(ii).
%
%
%$\|\nabla e_h^m\|_{L_2(\Omega)}\le \|R^m\|_{L_2(\Omega)}\color{magenta}+2\kappa_{m,m}\| e_h^m\|_{L_2(\Omega)}$.
%{\color{magenta}Here, by~\eqref{kappa_def_b}, $\kappa_{m,m}\sim M^{-\alpha}$, while}
%$\|R^m\|_{L_2(\Omega)}\lesssim \|\delta_t^\alpha \rho(\cdot, t_m)+c\rho(\cdot, t_m) \|_{L_2(\Omega)}
%+t_m^{-\gamma-1}M^{-1}$, where $\gamma=\min\{\alpha,1-\alpha\}$, and we also used \eqref{new_rm_bound}.
%
Combining this with %\eqref{Ritz_er_L2} and
the standard error bound on $\|\nabla\rho\|_{L_2(\Omega)}$ (see, e.g., \cite[(8.5.4)]{BrenScott}) yields convergence
of \eqref{FE_problem}
in the $W_2^1(\Omega)$ semi-norm for $t_m\gtrsim 1$.%\vspace{-1pt}
\end{remark}

\subsection{Lumped-mass linear finite elements: application of Theorem~\ref{theo_FEM} to the error analysis in the $L_\infty(\Omega)$ norm}\label{ssec_lumped}~~
\smallskip

\noindent
In this section we restrict our consideration to
the case $a_k=1$ and $b_k=0$ in \eqref{LL_def} for $k=1,\ldots,d$,  and
lumped-mass linear finite-element discretizations,
i.e.
$\ell=1$ and
$\langle \cdot,\cdot\rangle_h$ is defined using the quadrature rule $Q_T[v]:=\int_T v^I$, where $v^I$ is the standard linear Lagrange interpolant.

For the error of the Ritz projection $\rho(\cdot, t)=\RR_h u(t)-u(\cdot, t)$,
%from Remark~\ref{rem_Ritz},
one has\vspace{-0pt}
\beq\label{Ritz_er_Linf}
\|\pt_t^l \rho(\cdot, t)\|_{L_\infty(\Omega)}\lesssim h^{2-q}|\ln h|\Bigl\{\|\pt_t^l u(\cdot, t)\|_{W^{2-q}_\infty(\Omega)}
+
%\underbrace{
%h^{2-m}
%|\ln h|^{1/2}
\|\pt_t^l \LL u(\cdot, t)\|_{W^{2-q}_{d/2}%\infty
(\Omega)}\Bigr\},
%}_%
%{{}=:{\mathcal E}^l_{\rm l.m.}}
%\quad\;\;\mbox{for}\;\;
%l=0,1,\;t\in(0,T].
\eeq
where $l=0,1$, $q=0,1$ and $t\in(0,T]$.
Consider \eqref{Ritz_er_Linf} for $l=0$ (while the case $l=1$ is similar as $\pt_t\rho(\cdot, t)=\RR_h \dot u(t)-\dot u(\cdot, t)$, where $\dot u=\pt_t u$).
If $\langle\cdot,\cdot\rangle_h=\langle\cdot,\cdot\rangle$, the terms involving $\LL u$
disappear; this version of \eqref{Ritz_er_Linf}
immediately follows from the quasi-optimality of the Ritz projection in the $L_\infty$ norm;
see, e.g.,
\cite[Theorem~2]{Sch_80},
\cite[Theorem~3.1]{Leyk_Vexler_sinum16} and \cite[Theorem~5.1]{SchWa_best},
for, respectively,
polygonal, convex polyhedral and smooth domains.
The lumped-mass quadrature $\langle\cdot,\cdot\rangle_h\neq \langle\cdot,\cdot\rangle$ induces
an additional component $\hat\rho_h\in S_h$ in $\rho$,
defined by
$\langle\nabla\hat\rho_h,\nabla v_h\rangle=\langle \mathring{\LL} u,v_h\rangle_h-\langle
\mathring{\LL} u,v_h\rangle\;\forall v_h\in S_h$.
For completeness, the bound of type \eqref{Ritz_er_Linf} (with $l=0$) for $\hat\rho_h$ is proved in Appendix~\ref{app1}.

As we intend to apply Theorem~\ref{theo_FEM} under condition $\rm A_\infty$,
note that the latter is satisfied
under the following assumptions on the triangulation.
For $\Omega\subset\R^2$, let $\mathcal T$ be a Delaunay triangulation, i.e.,
%the maximum diameter of any triangle is
%$h$,
%the area of any triangle is bounded below by $Ch^2$,
%and
the sum of the angles opposite to any interior
edge is less than or equal to $\pi$.
In the case $\Omega\subset\R^3$, for any interior edge $E$, let $\omega_E:=\{T\in{\mathcal T}: \pt T\supset E\}$,
and impose that
$\sum_{T\subset\omega_E}|E_T'|\cot\theta_T^E\ge 0$,
where $\theta_T^E$ is the angle between the faces of $T$ not containing $E$, and the edge $E_T'$ is their intersection.
Under these conditions on $\mathcal T$, the stiffness matrix for $-\sum_{k=1}^d\pt^2_{x_k}$ is an $M$-matrix
(see, e.g., \cite[Lemma~2.1]{xu_zik}), while the mass matrix is positive diagonal. So indeed, $\rm A_\infty$ is satisfied.
Note also that it is sufficient, but clearly not necessary, for the triangulation to be non-obtuse (i.e. with no interior angle in any mesh element exceeding $\frac{\pi}2$).

\begin{corollary}\label{cor_L_infty}
(i) Under the conditions of Theorem~\ref{theo_FEM} for $p=\infty$,
suppose  that
$\|\pt_t^l  u(\cdot,t)\|_{W^{2}_\infty(\Omega)}\lesssim 1+t^{\alpha-l}$
and
$\|\pt_t^l  \LL u(\cdot,t)\|_{W^{2}_{d/2}(\Omega)}\lesssim 1+t^{\alpha-l}$
for $l=0,1$,
and also
$\|\pt_t^2 u(\cdot,t)\|_{L_\infty(\Omega)}\lesssim 1+t^{\alpha-2}$,
%
%$\|\pt_t^l \LL^q u(\cdot,t)\|_{W^{2}_\infty(\Omega)}\lesssim 1+t^{\alpha-l}$ for $l,q=0,1$
%and $\|\pt_t^2 u(\cdot,t)\|_{L_\infty(\Omega)}\lesssim 1+t^{\alpha-2}$,
%
where $t\in(0,T]$.
Then
\begin{align*}
\|u(\cdot,t_m)-u_h^m\|_{L_\infty(\Omega)}&\lesssim M^{-\min\{\alpha r,2-\alpha\}}+%t_m^{\alpha}\,
h^{2}|\ln h|&\mbox{for}\;\; m=1,\ldots,M.
%$$
\intertext{(ii) If, additionally, $r=1$, then  }
%$$
\|u(\cdot,t_m)-u_h^m\|_{L_\infty(\Omega)}&\lesssim t_m^{\alpha-1}M^{-1}+%t_m^{\alpha} \,
h^{2}|\ln h|&\mbox{for}\;\; m=1,\ldots,M.
\end{align*}
%for $m=1,\ldots,M$.
\end{corollary}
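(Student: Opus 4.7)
The plan is to invoke Theorem~\ref{theo_FEM} and then bound each of the four contributions on the right-hand side of \eqref{FE_error_bound} separately. The $\psi^j$-term is treated by the arguments already spelled out in the proofs of Corollaries~\ref{cor_simplest} and~\ref{cor_simplest_star}: the hypotheses $\|\pt_t^l u(\cdot,t)\|_{W^2_\infty(\Omega)}\lesssim 1+t^{\alpha-l}$ for $l=0,1$ and $\|\pt_t^2 u(\cdot,t)\|_{L_\infty(\Omega)}\lesssim 1+t^{\alpha-2}$ imply pointwise in $x$ the scalar bounds $|\pt_t^l u(x,t)|\lesssim 1+t^{\alpha-l}$ needed there, so $\|\psi^j\|_{L_\infty(\Omega)}\lesssim M^{-\min\{\alpha r,2-\alpha\}}$ for part~(i) and $\tau^{-\gamma} t_j^{1-\alpha+\gamma}\|\psi^j\|_{L_\infty(\Omega)}\lesssim \tau\lesssim M^{-1}$ for part~(ii).

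The remaining ingredient is to translate the Ritz-projection inequality \eqref{Ritz_er_Linf} (with $q=0$, $\ell=1$) into bounds compatible with the stated regularity. For $l=0,1$, combining $\|\pt_t^l u(\cdot,t)\|_{W^2_\infty(\Omega)}\lesssim 1+t^{\alpha-l}$ and $\|\pt_t^l \LL u(\cdot,t)\|_{W^2_{d/2}(\Omega)}\lesssim 1+t^{\alpha-l}$ with \eqref{Ritz_er_Linf} yields
\begin{equation*}
\|\pt_t^l \rho(\cdot,t)\|_{L_\infty(\Omega)}\lesssim h^2|\ln h|\,(1+t^{\alpha-l})\qquad\mbox{for }l=0,1,\;t\in(0,T].
\end{equation*}
Evaluating this at $l=0$, $t\in\{0,t_m\}\subset[0,T]$ gives $\max_{t\in\{0,t_m\}}\|\rho(\cdot,t)\|_{L_\infty(\Omega)}\lesssim h^2|\ln h|$, while for $l=1$ one gets
\begin{equation*}
\int_0^{t_m}\!\!\|\pt_t\rho(\cdot,t)\|_{L_\infty(\Omega)}\,dt\lesssim h^2|\ln h|\int_0^{t_m}\!\!(1+t^{\alpha-1})\,dt\lesssim h^2|\ln h|,
\end{equation*}
since $\alpha>0$. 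Choosing $u_h^0$ as the standard Lagrange interpolant of $u_0$ yields $\|u_0-u_h^0\|_{L_\infty(\Omega)}\lesssim h^2\|u_0\|_{W^2_\infty(\Omega)}\lesssim h^2$ (the required $W^2_\infty$-regularity of $u_0=u(\cdot,0)$ follows from the $l=0$ hypothesis at $t=0$).

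Summing these four contributions in \eqref{FE_error_bound} proves parts~(i) and~(ii). No step is a serious obstacle: the only subtle point is ensuring that the time-integral of $\|\pt_t\rho\|_{L_\infty}$ remains $O(h^2|\ln h|)$ despite the singularity of $\pt_t u$ at $t=0$, but the singularity $t^{\alpha-1}$ is integrable on $[0,T]$, so this causes no difficulty and the $t_m$-dependence drops out. This is in contrast to the $\psi^j$-contribution in part~(ii), which carries the essential $t_m^{\alpha-1}M^{-1}$ factor coming from Lemma~\ref{lem_simplest_star}.
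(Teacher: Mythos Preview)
Your proposal is correct and follows essentially the same approach as the paper's own proof: invoke Theorem~\ref{theo_FEM}, bound $\|\psi^j\|_{L_\infty(\Omega)}$ via the arguments of Corollaries~\ref{cor_simplest} and~\ref{cor_simplest_star}, and control the Ritz-projection terms using~\eqref{Ritz_er_Linf} with $q=0$ together with the stated regularity to obtain $\|\pt_t^l\rho(\cdot,t)\|_{L_\infty(\Omega)}\lesssim h^2|\ln h|(1+t^{\alpha-l})$. Your write-up is in fact more detailed than the paper's (which is very terse), in particular in spelling out the integrability of $t^{\alpha-1}$ for the $\int_0^{t_m}\|\pt_t\rho\|_{L_\infty}$ term and the treatment of $\|u_0-u_h^0\|_{L_\infty(\Omega)}$.
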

\begin{proof}
Imitate the proofs of Corollaries~\ref{cor_simplest} and~\ref{cor_simplest_star} for parts (i) and (ii), respectively,
to show that
$\|\psi^j\|_{L_\infty(\Omega)}\lesssim M^{-\min\{\alpha r,2-\alpha\}}$
and
$\tau^{-\gamma}\,t_j^{1-\alpha+\gamma}\,\|\psi^j\|_{L_\infty(\Omega)}\lesssim \tau\lesssim M^{-1}$.
Combine these bounds with
$\|\pt_t^l \rho(\cdot, t)\|_{L_\infty(\Omega)}\lesssim h^{2}|\ln h|(1+t^{\alpha-l})$ for $l=0,1$
(the latter follows from \eqref{Ritz_er_Linf}).
\end{proof}

\begin{remark}
%{\color{red}
%The errors of finite element discretizations of type [different from \eqref{FE_problem}] are estimated in the $L_\infty(\Omega)$ norm in [.....]
%}
%
The assumptions made in Corollary~\ref{cor_L_infty} on the derivatives of $u$ are realistic;
see \S\ref{ss_61} and Example~C in \S\ref{ss_62}.
\end{remark}
\smallskip

\subsection{Linear finite elements without quadrature: a comment on %the %application of Theorem~\ref{theo_FEM} to
the error analysis in the $L_\infty(\Omega)$ norm}~~\label{ssec_lumped3}
\smallskip

\noindent
{\color{blue}We shall start by checking condition $\rm A_\infty$, used in Theorem~\ref{theo_FEM}, for the simplest case
of $d=1$ and  $\LL=-\pt^2_{x_1}$.
A straightforward calculation shows
that the stiffness matrix associated with $\AA_h(\cdot,\cdot)+\kappa_{1,1}\langle \cdot, \cdot\rangle$ will be
a tridiagonal matrix with diagonal entries $\frac2h+\frac23 h\kappa_{1,1}$
and off-diagonal entries $-\frac1h+\frac16 h\kappa_{1,1}$.
So, for off-diagonal entries to be non-positive, one needs to impose $\kappa_{1,1}\le 6 h^{-2}$, i.e.
$\tau_1^{-\alpha}\le 6\Gamma(2-\alpha)h^{-2}$. Note that exactly the same condition  is required for the discrete maximum principle
in the classical parabolic case of \eqref{problem} with $\alpha=1$
assuming the backward Euler discretization in time is combined with linear finite elements without quadrature.

A similar condition is true if
}
 $a_k=1$  in \eqref{LL_def} for $k=1,\ldots,d$, and
$\langle \cdot,\cdot\rangle_h=\langle \cdot,\cdot\rangle$. Then
the mass matrix is not diagonal and contains positive off-diagonal entries.
Still, condition $\rm A_\infty$ is satisfied
(and so Theorem~\ref{theo_FEM} with $p=\infty$ can be applied)
 if $h^2\tau_1^{-\alpha}\le C_{\mathcal T}$
for a sufficiently small constant $C_{\mathcal T}$ that we specify below,
and, additionally,
the triangulation is non-obtuse and $\min_{T\subset\omega_E}\theta_T^E\le \theta^*$ for some fixed positive $\theta^*<\frac{\pi}2$
(for the notation, see \S\ref{ssec_lumped}).
Indeed, for such a triangulation, not only the stiffness matrix for $-\sum_{k=1}^d\pt^2_{x_k}$ is an $M$-matrix,
but its contribution to $\mathbb A^m_{zz'}$, for any two nodes $z\neq z'$ connected by an interior edge $E$,
will be strictly negative and equal to
$ -\sum_{T\subset\omega_E}|E_T'|\cot\theta_T^E/\{d(d-1)\}$
(with $E_T'$, in the case $d=2$, being a node and the notational convention $|E_T'|=1$ used);
see \cite[Lemma~2.1]{xu_zik}.
%(while, for the notation, see \S\ref{ssec_lumped}).
A calculation also shows that
%Also,
the contribution of
$\langle (\kappa_{1,1}+c)\phi_{z'},\phi_z\rangle$ to $\mathbb A^m_{zz'}$ does not exceed
$(\tau_1^{-\alpha}/\Gamma(2-\alpha)+\|c\|_{L_\infty(\Omega)})|\omega_E|/\{(d+1)(d+2)\}$.
Furthermore,
the contribution of
$\langle  b_k(x)\, \pt_{x_k}\!\phi_{z'},\phi_z\rangle$ to $\mathbb A^m_{zz'}$ is $\lesssim h^{-1}|\omega_E|$.
As the triangulation is  quasi-uniform, these observations imply that %and strictly acute,
there is a positive constant $C'_{\mathcal T}$ such that for any interior edge $E$, one has%\vspace{-3pt}
$$
\frac{(d+1)(d+2) }{d(d-1)}\,\,\,|\omega_E|^{-1}\!\!\sum_{T\subset\omega_E}|E_T'|\cot\theta_T^E \ge C'_{\mathcal T} h^{-2}.
%\qquad\forall\, T\subset\omega_E.%,\;\;\forall\, E.
\vspace{-4pt}
$$
Now, $h^2\tau_1^{-\alpha}\le C_{\mathcal T}$,
with any fixed  constant $C_{\mathcal T}< C'_{\mathcal T}\Gamma(2-\alpha)$,
implies $\rm A_\infty$ (assuming that $h$ is sufficiently small; in fact, one can use $C_{\mathcal T}=C'_{\mathcal T}\Gamma(2-\alpha)$ if
$c=0$ and $b_k=0$ for $k=1,\ldots,d$ in \eqref{LL_def}).
To avoid computing $C'_{\mathcal T}$, one can instead impose $h^2|\ln h|\,\tau_1^{-\alpha}\le C_{\mathcal T}$ with any fixed $C_{\mathcal T}>0$
and $h$ sufficiently small.
Note that although the above triangulation condition is somewhat restrictive,
it is satisfied
by mildly structured meshes
with all mesh elements close
to equilateral triangles/regular tetrahedra.

Note also that in most practical situations, the convergence rates do not deteriorate because of the restriction $\tau_1^\alpha\gtrsim h^2$.
To be more precise, as long as $r\le (2-\alpha)/\alpha$ (including the optimal $r= (2-\alpha)/\alpha$),
the error in part (i) of Corollary~\ref{cor_L_infty} is $\lesssim M^{-\alpha r}+h^2|\ln h|\sim \tau_1^{\alpha}+h^2|\ln h|$.
Similarly, in part (ii) for $t_m\gtrsim 1$, the error is $\lesssim \tau_1+h^2|\ln h|$, so a reasonable choice $\tau_1\sim h^2$ is clearly within the restriction
$\tau_1^\alpha\gtrsim h^2$.

\section{Estimation of derivatives of the exact solution $u$}\label{sec_dervts}

The purpose of this section is to show that the assumptions made
in \S\S\ref{sec_semidiscr}--\ref{sec_FE}
on the derivatives of the exact solution $u$ of \eqref{problem} are realistic, and
give examples of when they are satisfied.
The discussion will be mainly restricted to the case of the operator $\LL$ being self-adjoint (i.e. $b_k=0$ for $k=1\ldots,d$ in \eqref{LL_def});
for the non-self-adjoint case, see Remark~\ref{rem_nonsef_LL} below.
For simplicity, we also assume that $\Omega$ is either
a convex domain of polyhedral type or a smooth domain.
Hence, we shall be able to invoke
$\|v\|_{W^2_2(\Omega)}\lesssim \|\LL v\|_{L_2(\Omega)}$ when $v=0$ on $\pt\Omega$,
as well as the consequent property $\|v\|_{L_\infty(\Omega)}\lesssim \|\LL v\|_{L_2(\Omega)}$
(in view of the Sobolev embedding theorem).

The approach that we consider here employs the method of separation of variables, in which
the eigenvalues and eigenfunctions of the self-adjoint operator $\LL$
(see, e.g., \cite[\S6.5]{evans}  for their existence and properties)
 are used to get
an explicit eigenfunction expansion of $u$.
Note that the time-dependent coefficients in this expansion are represented using Mittag-Leffler functions.
This approach was used in
  \cite{sakamoto} for smooth domains, \cite[\S2.2 and \S3.4]{laz_semidiscr} for polygonal/polyhedral domains,
and \cite[\S2]{stynes_etal_sinum17}
 for $\Omega=(0,1)$.
{\color{blue}Eigenfunction expansions are frequently used to establish regularity estimates for fractional-derivative problems; see, e.g.
\cite{McL10, NOS16}, where somewhat different problems were considered. In particular, the bounds \cite[(1.6) and (1.7)]{McL10}
are somewhat similar to those we obtain below.}

\subsection{Temporal derivatives of $u$}\label{ss_61}
The assumptions made in Corollary~\ref{cor_semidiscr} on temporal derivatives of $u$ (that $\|\pt_t^l u(\cdot,t)\|_{L_p(\Omega)}\lesssim 1+t^{\alpha-l}$ for $l=1,2$, and $p\in\{2,\infty\}$) are realistic.
For example,  for the case  $p=\infty$, $d=1$ and $\LL=-\pt_{x_1}^2+c(x_1)$, they are  satisfied under
certain regularity assumptions on $u_0$ and $f$ (including $\LL^l f(\cdot,t)=\LL^q u_0=0$ on $\pt\Omega$ for $l=0,1$ and $q=0,1,2$) by \cite[Theorem~2.1]{stynes_etal_sinum17}.
The proof relies on the term-by-term differentiation with respect to $t$ of the eigenfunction expansion of $u$.
Note that this proof cannot be directly extended to $d>1$
(as the eigenfunctions are not necessarily  uniformly bounded, while the eigenvalues exhibit a different asymptotic behaviour in higher dimensions).

These difficulties are avoided by the following modification.
A term-by-term application of $\LL^q\pt_t^l$ to the eigenfunction expansion of $u$ yields
$\|\LL^q\pt_t^l u(\cdot,t)\|_{L_2(\Omega)}\lesssim 1+t^{\alpha-l}$ for $l=1,2$ and $q=0,1$.
Now, setting $q=0$ and $q=1$ implies the desired bounds on the temporal derivatives  for $p=2$ and $p=\infty$, respectively.
It should be noted that this approach relies on the regularity assumptions that
$\|u_0\|_{\LL^{q+2}}+\|\pt_t^lf(\cdot,t)\|_{\LL^{q+1}}\lesssim 1$ for %$t\in[0,T]$ and
$l=0,1,2$
(where the assumptions of the temporal derivatives of $f$ may, in fact, be weakened).
Here (similarly to  \cite{laz_semidiscr,sakamoto,stynes_etal_sinum17}) we used the norm
$\|v\|_{\LL^\gamma}:=\bigl\{\sum_{i=1}^\infty\lambda_i^{2\gamma}\langle v,\psi_i\rangle^2\bigr\}^{1/2}$,
where $0<\lambda_1<\lambda_2\le \lambda_3\le\ldots$ are the eigenvalues of $\LL$,
while $\{\psi\}_{i=1}^\infty$ are the corresponding normalized eigenfunctions satisfying $\|\psi_i\|_{L_2(\Omega)}=1$.
\vspace{-0.1cm}

\subsection{Spatial and mixed derivatives of $u$}\label{ss_62}
In \S\S\ref{sec_FD}--\ref{sec_FE} (see Corollaries \ref{cor_FD},\,\ref{cor_L2},\,\ref{cor_L_infty}), a number of additional assumptions were made that involve spatial derivatives of~$u$.
Here the situation is more delicate, as if $\Omega$ has any corners, $u$ may exhibit corner singularities.
%\smallskip
%Below we give a few examples, to show that
%our assumptions are satisfied in certain situations....

{\it Example A.}
%It is assumed in Corollary~\ref{cor_FD} that $\|\pt_{x_k}^l u(\cdot,t)\|_{L_\infty(\Omega)}\lesssim 1$ for $l=3,4$, $k=1,\ldots,d$
%in the domain $\Omega=(0,1)^d$.
Consider $\Omega=(0,1)^2$ and $\LL=-[\pt_{x_1}^2+\pt_{x_2}^2] +c(x_1,x_2)$ %(where $\triangle=\pt_{x_1}^2+\pt_{x_2}^2$)
under the assumption
$\|u_0\|_{\LL^{3}}+\|f(\cdot,t)\|_{\LL^{5/2}}\lesssim 1$.
Note that the latter implies that the elliptic corner compatibility conditions up to order 2 are satisfied.
Hence, \cite[Theorem~3.1]{volkov} combined with the Sobolev embedding theorem yields
$\|u\|_{W^4_\infty(\Omega)}\lesssim \|\LL u\|_{W^{2+\epsilon}_\infty(\Omega)}\lesssim \|\LL u\|_{W^4_2(\Omega)}$ for any $t\in(0,T]$.
Similarly, $\|\LL u\|_{W^4_2(\Omega)}\lesssim \|\LL^2 u\|_{W^2_2(\Omega)}\lesssim \|\LL^3 u\|_{L_2(\Omega)}$,
while one can show (by an application of $\LL^3$ to the eigenfunction expansion of $u$)
that $\|\LL^3 u\|_{L_2(\Omega)}\lesssim 1$. Combining these observations, one gets
$\|u\|_{W^4_\infty(\Omega)}\lesssim 1$,
so the assumptions made in Corollary~\ref{cor_FD} on the spatial derivatives of $u$ are satisfied.%
%\smallskip

%$\LL^l f(\cdot,t)=\LL^m u_0=0$ on $\pt\Omega$ for $l=0,1$ and $m=0,1,2$.
%Now, $|\pt_{x_k}u|\lesssim 1$ and $\pt^2_{x_k}u=0$ on $\pt\Omega$ for $k=1,2$ (the former can be shown using the maximum principle for the %operator $D_t^\alpha+\LL$). As $|(D_t^\alpha+\LL)\pt_{x_k}u|\lesssim 1$, one gets $|\pt_{x_k}u|\lesssim 1$ in $\Omega$,
%then $|(D_t^\alpha+\LL)\pt^2_{x_k}u|\lesssim 1$, so $|\pt^2_{x_k}u|\lesssim 1$ in $\Omega$.
%A similar argument yields $|\pt^l_{x_k}u|\lesssim 1$ in $\Omega$ for $l=3,4$.
%In particular, the intermediate bound $|\pt^4_{x_k}u|\lesssim 1$ on $\pt\Omega$
%follows from the observation that on $\pt\Omega$ one has
%$\pt^4_{x_1}u+\pt^4_{x_2}u=\triangle^2u=[(-\triangle +c)^2+\triangle c]u=\triangle (cu)$.
%Thus the assumptions made in Corollary~\ref{cor_FD} on the spatial derivatives of $u$ are satisfied.

{\it Example B.}
It is assumed in Corollary~\ref{cor_L2}
  that $\|\pt_t^l u(\cdot,t)\|_{W^{\ell+1}_2(\Omega)}\lesssim 1+t^{\alpha-l}$ for $l=0,1$  and $t\in(0,T]$.
For linear finite elements, i.e. $\ell=1$,
%these assumptions are satisfied under the condition $\|u_0\|_{\LL^{3}}+\|\pt_t^lf(\cdot,t)\|_{\LL^{2}}\lesssim 1$ for $l=1,2$.
%Indeed, combine
these bounds follow from
$\|\pt_t^l u\|_{W^2_2(\Omega)}\lesssim \|\LL\pt_t^l u\|_{L_2(\Omega)}$
combined
with the bound on $\|\LL\pt_t^l u(\cdot,t)\|_{L_2(\Omega)}$ obtained in \S\ref{ss_61}
(see the case $q=1$).
For $\ell>1$, a similar argument can be used  (under additional data regularity assumptions) if $\Omega$ is smooth.%
%\smallskip

{\it Example C.}
If $\Omega$ is smooth,
then both $\|\pt_t^l u(\cdot,t)\|_{W^{2}_\infty(\Omega)}$ and $\|\LL\pt_t^l u(\cdot,t)\|_{W^{2}_{d/2}(\Omega)}$ are
$\lesssim \|\LL\pt_t^l u(\cdot,t)\|_{W^{2}_2(\Omega)}$.
For the latter, using the argument of Example B,
one can show that
$\|\LL\pt_t^l u(\cdot,t)\|_{W^{2}_2(\Omega)}\lesssim 1+t^{\alpha-l}$ for $l=0,1$
under the regularity assumption
$\|u_0\|_{\LL^{3}}+\|\pt_t^lf(\cdot,t)\|_{\LL^{2}}\lesssim 1$.
So for this example, the assumptions made in Corollary~\ref{cor_L_infty} on $u$ are satisfied.
\vspace{-0.2cm}

%one can show that
%$\|\LL^q\pt_t^l u(\cdot,t)\|_{W^{2}_\infty(\Omega)}\lesssim 1+t^{\alpha-l}$ for $l,q=0,1$  using the argument of Example B.
%To be more precise, note that
%$\|u_0\|_{\LL^{q+3}}+\|\pt_t^lf(\cdot,t)\|_{\LL^{q+2}}\lesssim 1$
%implies $\|\LL^q\pt_t^l u(\cdot,t)\|_{W^{4}_2(\Omega)}\lesssim 1+t^{\alpha-l}$, and combine the latter with the Sobolev embedding %result
%$\|\LL^q\pt_t^l u\|_{W^{2}_\infty(\Omega)}\lesssim\|\LL^q\pt_t^l u\|_{W^{4}_2(\Omega)}$.
%So for this example, the assumptions made in Corollary~\ref{cor_L_infty} on $u$ are satisfied.
%
%%$\|\pt_t^l u(\cdot,t)\|_{W^{2m}_\infty(\Omega)}\lesssim 1+t^{\alpha-l}$ for $l=1,2$ using the argument used for Example B.
%%To be more precise, one can show that
%%$\|u_0\|_{\LL^{3+m}}+\|\pt_t^lf(\cdot,t)\|_{\LL^{2+m}}\lesssim 1$
%%implies $\|\pt_t^l u(\cdot,t)\|_{W^{2+2m}_2(\Omega)}\lesssim 1+t^{\alpha-l}$, and combine the latter with the Sobolev embedding %%result
%%$\|\pt_t^l u\|_{W^{2m}_\infty(\Omega)}\lesssim\|\pt_t^l u\|_{W^{2m+2}_2(\Omega)}$.

\begin{remark}[Non-self-adjoint $\LL$]\label{rem_nonsef_LL}
Even if some coefficient(s) $b_k\neq 0$ in \eqref{LL_def},
%Then $\LL$ typically has complex eigenvalues
one can sometimes employ the eigenfunction expansion after
reducing the problem \eqref{problem} to the self-adjoint case.
For example, if the coefficients $\{a_k\}$ and $\{b_k\}$ in \eqref{LL_def} are constant,
it suffices to rewrite \eqref{problem} for the unknown function
$\widetilde u:=u\exp\bigl\{-\sum_{k=1}^d\frac12 (b_k/a_k)x_k\bigr\}$.
A similar trick for the case of variable coefficients and $d=1$ is described in \cite[\S2]{gracia_etal_cmame}.
\end{remark}

\newpage

\section{Numerical results}\label{sec_Num}

  \begin{figure}[t!]%[b!]
% \vspace*{-0.5pc}
\begin{center}
\includegraphics[height=0.38\textwidth]{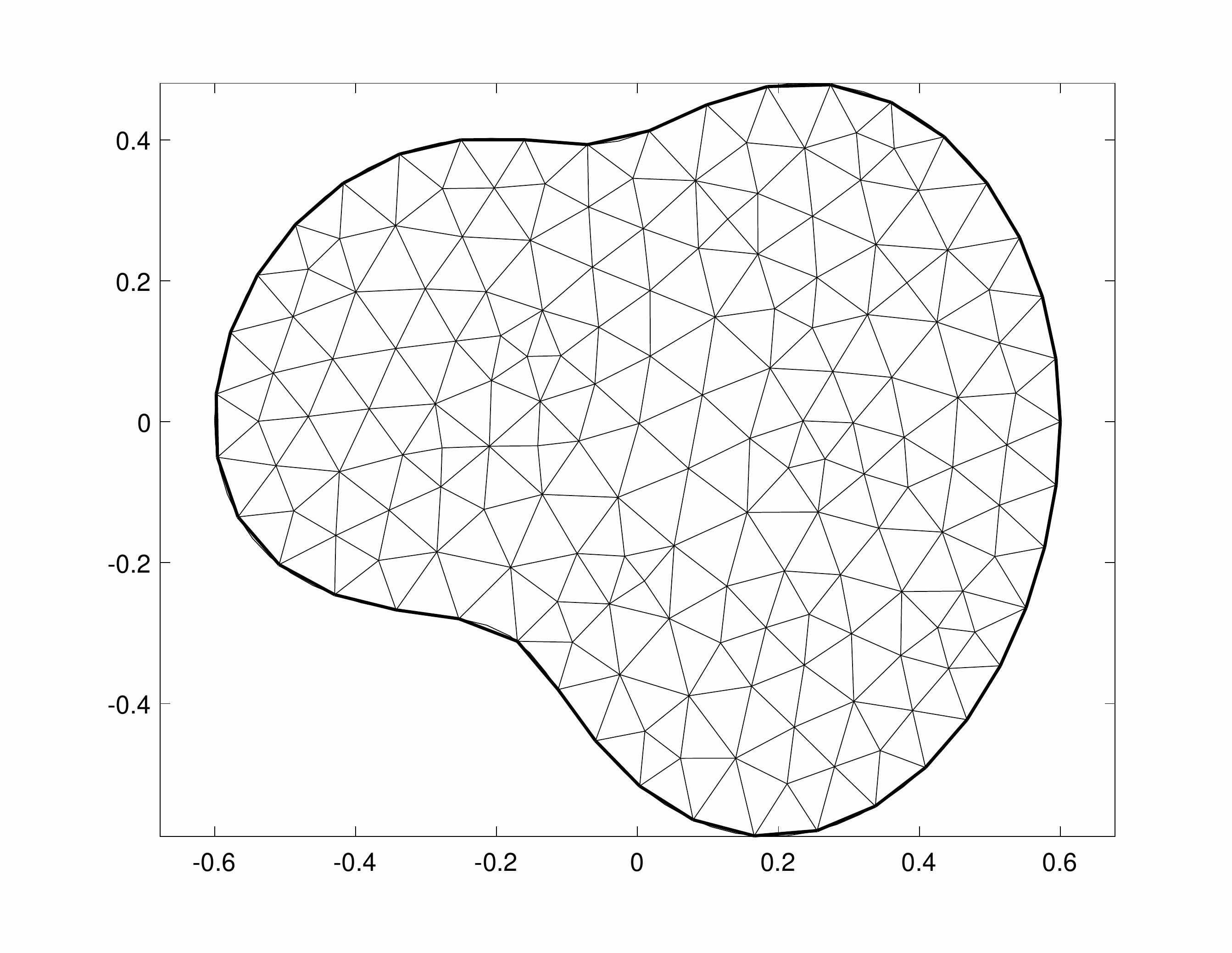}\hfill\includegraphics[height=0.38\textwidth]{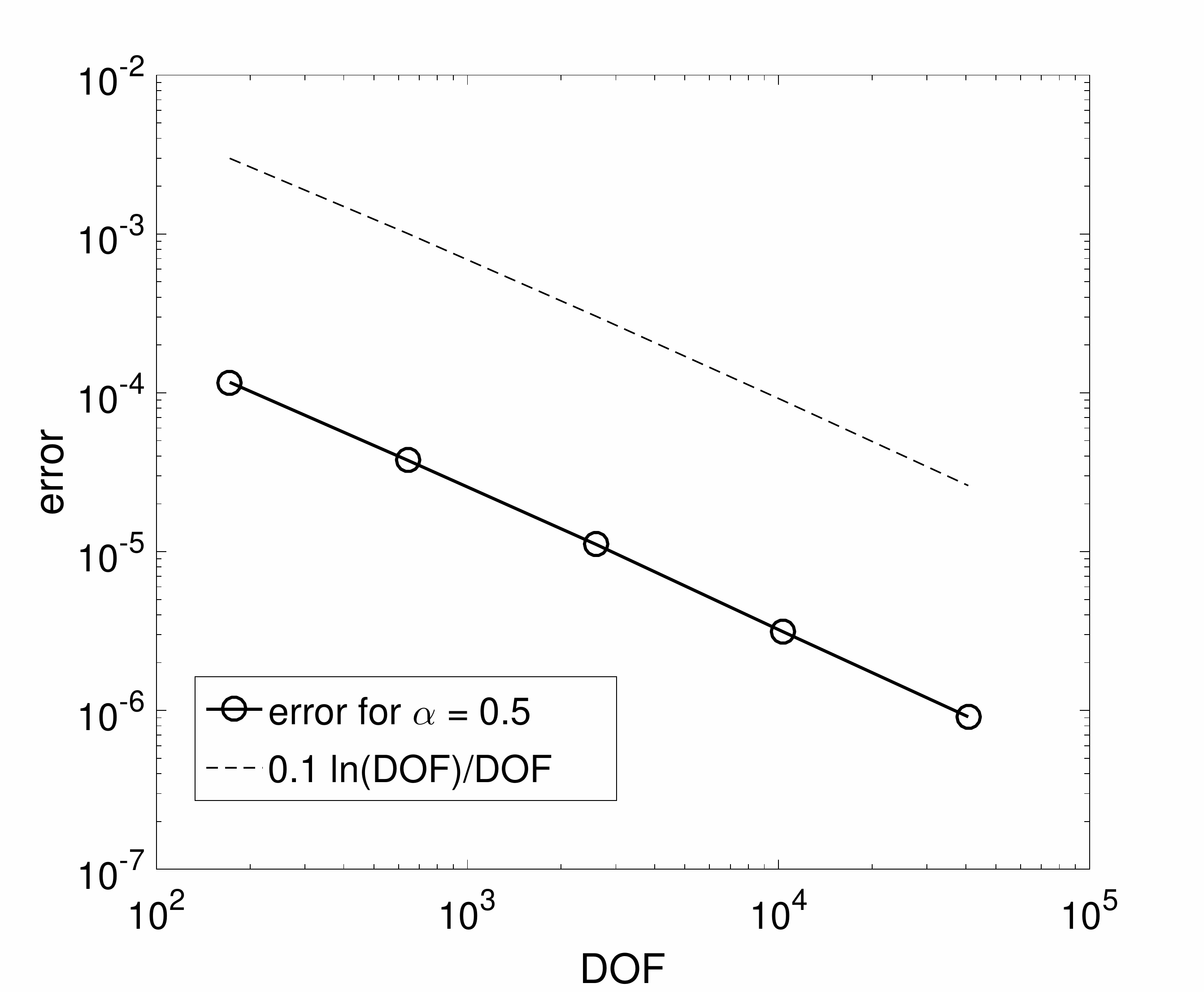}
\end{center}
\vspace{-0.3cm}
 \caption{\label{fig_mesh}\it\small
 Delaunay triangulation of $\Omega$ with DOF=172 (left),
 maximum nodal errors for $\alpha=0.5$, $r=(2-\alpha)/\alpha$ and $M=10^4$.}
 \end{figure}

 {
\begin{table}[b!]
%\beforecaption={\tabcolsep=0pt}
\begin{center}
\caption{
%\parbox[t]{9cm}{
 Maximum nodal errors (odd rows) and
computational rates $q$ in $M^{-q}$ (even rows) for
 $r=(2-\alpha)/\alpha$ and spatial DOF=398410}%}
\label{t2}%\raggedright
\vspace{-0.1cm}
{\small
\begin{tabular}{rrrrrrrr}
\hline
\strut\rule{0pt}{9pt}&&
$M=64$& $M=128$&$M=256$&
$M=512$& $M=1024$&$M=2048$\\
\hline
$\alpha=0.3$&
&4.157e-4	&1.428e-4	&4.750e-5	&1.558e-5	&5.053e-6& 1.624e-6\\
&&1.542	&1.588	&1.608	&1.624	&1.637\\[3pt]
%
%  computation for alpha=0.3:
%  DOF=398410 ;                                                DOF=255435(for final M only)
%  &4.157e-04	&1.428e-04	&4.750e-05	&1.558e-05	&5.053e-06	&1.624e-06	
%  &1.542	&1.588	&1.608	&1.624	&1.637	
%
%
$\alpha=0.5$&
&7.824e-4	&3.109e-4	&1.173e-4	&4.301e-5	&1.555e-5	&5.582e-6\\	
&&1.331	&1.407	&1.447	&1.468	&1.478	\\[3pt]
%
%<<< N=400
%
$\alpha=0.7$&
&1.236e-3	&5.924e-4	&2.693e-4	&1.181e-4	&5.045e-5&2.120e-5\\
&&1.061	&1.137	&1.190	&1.226  &1.251\\
% --- with M=400
\hline
\end{tabular}}
\end{center}
\end{table}
}

Our model problem is \eqref{problem} with $\LL=-(\pt_{x_1}^2+\pt_{x_2}^2)$, posed
in the domain $\Omega\times[0,1]$ (see Fig.\,\ref{fig_mesh}, left)
with $\pt\Omega$  parameterized by
$x_1(l):=\frac23R\cos\theta$ and $x_2(l):=R\sin\theta$, where
$R(l): =  0.4 + 0.5\cos^2\! l$ and
$\theta(l) := l + e^{(l-5)/2}\sin(l/2)\sin l$ for $l\in[0,2\pi]$.
We choose  $f$, as well as the initial and non-homogeneous boundary conditions, so that the unique exact solution
$u=t^{\alpha}\cos(xy)$.
This problem is discretized by \eqref{FE_problem} (with an obvious modification for the case of non-homogeneous boundary conditions)
using
lumped-mass linear finite elements (described in \S\ref{ssec_lumped})
on quasiuniform Delaunay triangulations of $\Omega$ (with DOF denoting the number of degrees of freedom in space).

The errors in the maximum nodal norm
$\max_{z\in{\mathcal N},\,m=1,\ldots, M}|u_h^m(z)-u(z,t_m)|$
are shown in
Fig.\,\ref{fig_mesh} (right) and
Table~\ref{t2}
for, respectively, a large fixed $M$ and DOF.
In the latter case, we also give computational rates of convergence.
The graded temporal mesh
$\{t_j=T(j/M)^r\}_{j=0}^M$ was used
with the optimal $r= (2-\alpha)/\alpha$  (see Remark~\ref{rem_r_optimal}).
By
Corollary~\ref{cor_L_infty}(i),
the errors are expected to be $\lesssim M^{-(2-\alpha)}+h^2|\ln h|$.
Our numerical results clearly confirm the sharpness of this corollary for the considered case.
For more extensive numerical experiments,  we refer the reader to
\cite{stynes_etal_sinum17},
where, in particular, the influence of  $r$ on the errors is numerically investigated,
as well as  \cite{gracia_etal_cmame,laz_L1} for numerical results on uniform temporal meshes.

\appendix

%\newpage
\section{Proof of Lemma~\ref{lem_main_stability_star}}% for $\gamma=\alpha$}
\label{app_B}

\begin{proof}
(i) First, consider $\gamma=\alpha$.
As the operator $\delta_t^\alpha$ is associated with an $M$-matrix, it suffices to construct a barrier function $0\le B^j\lesssim t_j^{\alpha-1}$
such that $\delta_t^\alpha B^j\gtrsim \tau^\alpha t_j^{-\alpha-1}$.
Fix a sufficiently large number $2\le p\lesssim 1$, and then
set $\beta:=1-\alpha$ and $B(s):=\min\bigl\{(s/t_p)t_p^{-\beta}, s^{-\beta} \bigr\}$, and also $B^j:=B(t_j)$.
Note that, when using the notation of type $\lesssim$, the dependence on $p$ will be shown explicitly.

For $j\le p$, a straightforward calculation shows that
$\delta_t^\alpha B^j= D^\alpha_t B(t_j)\sim t_j^\beta t_p^{-\beta-1}\gtrsim p^{-\beta-1}(\tau^\alpha t_j^{-\alpha-1})$.
%
%{\color{red}Furthermore, a similar calculation, using $|\delta_tB^j|\le \beta t_p^{-1-\beta}$ for $j>p$,
%shows that one can choose $p=p(\alpha,k)$ sufficiently large for
%$D^\alpha_t B(t_j)\gtrsim \tau^{-1}\gtrsim \tau^\alpha /t_j^{1+\alpha}$
%for $j=p+1\ldots p+k$.
%}
%
Next, for $D_t^\alpha B(t)$ with $t>t_p$ one has
\begin{align*}
\Gamma(1-\alpha)\,D^\alpha_t B(t)=&\underbrace{\int_0^{t_p}\!\! t_p^{-\beta-1}(t-s)^{-\alpha}\,ds}_%
{{}\ge t_p^{-\beta} t^{-\alpha}}
-\underbrace{\beta\int_{t_p}^t\! s^{-\beta-1}(t-s)^{-\alpha}\,ds}_%
{{}=:  t^{-1}{ I} }\,.
%\\\ge&
%t_1^{-\beta} t^{-\alpha} - t^{-1}\beta\int_{\hat t_1}^1 \hat s^{-\beta-1}(1-\hat s)^{-\alpha}\,d\hat s
\end{align*}
Here, using $\hat s:=s/t$ and $\hat t_p:=t_p/t$, and noting that $\alpha+\beta=1$, one gets
$$
{I}=\beta\int_{\hat t_p}^1 \hat s^{-\beta-1}(1-\hat s)^{-\alpha}\,d\hat s
=\hat t_p^{-\beta}(1-\hat t_p)^{\beta}
\le \hat t_p^{-\beta}(1-\beta \hat t_p).
$$
Now, using $t^{-1}\hat t_p^{-\beta}=t_p^{-\beta} t^{-\alpha}$, one concludes for $t>t_p$ that
\beq\label{app_B_eq}
\Gamma(1-\alpha)\,
D^\alpha_t B(t)
%\ge t_p^{-\beta} t^{-\alpha}\Bigl\{1-(1-\hat t_p)^{1-\alpha}\Bigr\}
\ge
%\{\Gamma(1-\alpha)\}^{-1}
t_p^{-\beta} t^{-\alpha}\,(\beta t_p/t)
=%\{\Gamma(1-\alpha)\}^{-1}
\beta t_p^{\alpha}t^{-\alpha-1}=\beta p^{\alpha}\,(\tau^\alpha t^{-\alpha-1}).
\eeq
So, to complete the proof,
 it remains to show that $\frac12 D^\alpha_t B(t_m)\ge|\delta^\alpha_t B^m-D^\alpha_t B(t_m)|$ for any $m> p$.
 For the latter,
 with the notation $F(s):=\beta^{-1}(t_m-s)^{\beta}$,
note that $\delta^\alpha_t B^m$ involves
$\sum_{j=1}^m$ of the
 the terms
$$
\delta_t B^j \int_{t_{j-1}}^{t_j}\!\!\underbrace{(t_m-s)^{-\alpha}}_{{}=F'(s)}ds
=\delta_t F^j\int_{t_{j-1}}^{t_j}\!\!\!B'(s)ds.
$$
Note also that the component $\sum_{j=1}^p$ is identical in $\delta^\alpha_t B^m$ and $D^\alpha_t B(t_m)$.
Now, subtracting one of the above representations from the corresponding components  $\int_{t_{j-1}}^{t_j}\!B'(s)F'(s)\,ds$ of $D^\alpha_t B(t_m)$ yields
$$
|\delta^\alpha_t B^m-D^\alpha_t B(t_m)|\lesssim
\tau\int_{t_p}^{t_n}\!\!\!
s^{-\beta-1}(t_{m-1}-s)^{-\alpha-1}\,ds
+
\tau\int_{t_n}^{t_m}\!\!\!(s-\tau)^{-\beta-2}(t_m-s)^{-\alpha}ds,
$$
where $n:=\max\{p,\,\lfloor m/2\rfloor\}$, and $n\le m-2$ whenever $n>p$.
Here, when dealing with $s\in(t_{j-1},t_j)$, we also used
$|\delta_t F^j-F'(s)|\le \tau|F''(t_{j})|\lesssim \tau (t_{m-1}-s)^{-\alpha-1}$
for $j\le n$, and
 $| \delta_t B^j-B'(s)|\le \tau|B''(t_{j-1})|\lesssim \tau (s-\tau)^{-\beta-2}$
for $j>n$.
Estimating the above integrals $\int_{t_p}^{t_n}$ and $\int_{t_n}^{t_m}$
similarly to $I$ and respectively using
$(t_{m-1}-s)^{-1}\le 2(t_{m}-s)^{-1}$
and
$(s-\tau)^{-1}\le 2s^{-1}$, one finally gets
$$
|\delta^\alpha_t B^m-D^\alpha_t B(t_m)|\lesssim \tau t_m^{-2}(t_p/t_m)^{-\beta}
\sim p^{-\beta}\,(\tau^\alpha t_m^{-\alpha-1}).
$$
Combining this with \eqref{app_B_eq} and choosing $p$ sufficiently large yields
the desired assertion $\delta^\alpha_t B^m\gtrsim\tau^\alpha t_m^{-\alpha-1}$.
\smallskip

{\color{blue}(ii) It remains to consider $\gamma\in(0,\alpha)$. Set
$p_m:=2^m p$ and $c_m:=2^{-m\gamma}$.
Now, set
 $B_m(s):=\min\bigl\{s t_{p_m}^{-\beta-1}, s^{-\beta} \bigr\}$
 (i.e. $B_0(s)=B(s)$),
 and  $B_m^j:=B_m(t_j)$, and then $\bar B^j:=\sum_{m=0}^\infty c_mB_m^j$.
 Here $p$ is from part (i),
and, when using the notation of type $\lesssim$, the dependence on $\gamma$ and $m$, but not on $p$, will be shown explicitly.

 Imitating the argument used in part (i), one gets $\delta_t^\alpha B_m^j\ge 0$ for $j\ge 0$, while
 for $j>p_m$ one has
 $\delta_t^\alpha B_m^j\gtrsim t_{p_m}^\alpha t_j^{-\alpha-1}$
 (compare with \eqref{app_B_eq}).
The latter implies
 $c_m(\delta_t^\alpha B_m^j)\gtrsim c_m t_{p_m}^\gamma t_j^{-\gamma-1}\ge \tau^\gamma t_j^{-\gamma-1}$ for $p_m< j\le p_{m+1}$.
Combining this with
$c_0=1$ and
$\delta_t^\alpha B_0^j\gtrsim \tau^\alpha t_j^{-\alpha-1}\gtrsim \tau^\gamma t_j^{-\gamma-1}$ for $1\le j\le p_0$, one concludes that
$\delta_t^\alpha \bar B^j\gtrsim \tau^\gamma t_j^{-\gamma-1}$.
Finally, note that
$\sum_{m=0}^\infty c_m= C_\gamma:=(1-2^{-\gamma})^{-1}$,
so
$\bar B^j\le  C_\gamma t_j^{-\beta}=C_\gamma t_j^{\alpha-1}$, which   completes the proof.}
\end{proof}

\section{Lumped-mass quadrature error in the maximum norm}\label{app1}

The lumped-mass quadrature $\langle\cdot,\cdot\rangle_h\neq \langle\cdot,\cdot\rangle$ induces
an additional component $\hat\rho_h\in S_h$ in the error of the Ritz projection $\rho(\cdot, t)=\RR_h u-u$,
defined by
$\langle\nabla\hat\rho_h,\nabla v_h\rangle=\langle \mathring{\LL} u,v_h\rangle_h-\langle
\mathring{\LL} u,v_h\rangle\;\forall v_h\in S_h$.
We claim that %, for $q=0,1$,
\beq\label{Ritz_er_Linf__}
\| \hat\rho_h\|_{L_\infty(\Omega)}\lesssim h^{2-q}|\ln h|\,
\| \mathring{\LL} u(\cdot, t)\|_{W^{2-q}_{d/2}(\Omega)}
\qquad\mbox{for}\; q=0,1.
\eeq
The desired bound of type \eqref{Ritz_er_Linf} (with $l=0$) for $\hat\rho_h$ follows in view of $\mathring{\LL}=\LL-c$.

To prove \eqref{Ritz_er_Linf__}, a standard calculation yields, for any $v_h\in S_h$ and $q=0,1$, %with the notation $F:=\mathring{\LL} u$,
$$
|\langle\nabla\hat\rho_h,\nabla v_h\rangle|
%\lesssim h^{2-q}\langle |D^{2-q}(F v_h)|,1\rangle
\lesssim h^{2-q} \Bigl\{
\|\mathring{\LL} u\|_{W^{2-q}_{d/2}(\Omega)}\|v_h\|_{L_{d/(d-2)}(\Omega)}+\|\mathring{\LL} u\|_{W^{1-q}_d(\Omega)}\|\nabla v_h\|_{L_{d/(d-1)}(\Omega)}
\Bigr\}.
$$
In view of the Sobolev embedding $\|\mathring{\LL} u\|_{W^{1-q}_d(\Omega)}\lesssim \|\mathring{\LL} u\|_{W^{2-q}_{d/2}(\Omega)}$, one arrives at
\beq\label{appendix_eq}
|\langle\nabla\hat\rho_h,\nabla v_h\rangle|
%\lesssim h^{2-q}\langle |D^{2-q}(F v_h)|,1\rangle
\lesssim h^{2-q} \Bigl\{
\|v_h\|_{L_{d/(d-2)}(\Omega)}+\|\nabla v_h\|_{L_{d/(d-1)}(\Omega)}
\Bigr\}\,\|\mathring{\LL} u\|_{W^{2-q}_{d/2}(\Omega)}.
\eeq
Next, consider the cases $d=2,3$ separately.

For $d=2$, one has $d/(d-2)=\infty$ and $d/(d-1)=2$.
Set $v_h:=\hat\rho_h$ in \eqref{appendix_eq}, and recall the discrete Sobolev inequality $\|\hat\rho_h\|_{L_{\infty}(\Omega)}\lesssim |\ln h|^{1/2}\|\nabla \hat\rho_h\|_{L_2(\Omega)}$,
so
$\|\nabla \hat\rho_h\|_{L_2(\Omega)}
%\lesssim h^{2-q}\langle |D^{2-q}(F v_h)|,1\rangle
\lesssim h^{2-q} |\ln h|^{1/2}\,\|\mathring{\LL} u\|_{W^{2-q}_{d/2}(\Omega)}$, so \eqref{Ritz_er_Linf__} follows.

For $d=3$,
%one has $d/(d-2)=3$ and $d/(d-1)=\frac32$.
with $\|\hat\rho_h\|_{L_\infty(\Omega)}=|\hat\rho_h(x^*)|$ for some interior node $x^*\in\mathcal N$, let $g_h\in S_h$ be
a discrete version of the Green's function
 $g_h\in S_h$  associated with $x^*$ and defined by
$\langle\nabla g_h,\nabla v_h\rangle= v_h(x^*)$ $\forall v_h\in S_h$.
Now set $v_h:=g_h$ in \eqref{appendix_eq}, so
$$
\|\hat\rho_h\|_{L_\infty(\Omega)}=|\langle\nabla \hat\rho_h,\nabla g_h\rangle|
%=|\langle \mathring{\LL} u ,g_h\rangle_h-\langle \mathring{\LL} u,g_h\rangle|
\lesssim h^{2-q} \Bigl\{
\underbrace{\|g_h\|_{L_{3}(\Omega)}}_{{}\lesssim |\ln h|^{1/3}}
+
\underbrace{\|\nabla g_h\|_{L_{3/2}(\Omega)}}_{{}\lesssim |\ln h|^{2/3}}
\Bigr\}\,\|\mathring{\LL} u\|_{W^{2-q}_{d/2}(\Omega)},
$$
where we employed the bounds on $\|g_h\|_{L_{3}(\Omega)}$ and $\|\nabla g_h\|_{L_{3/2}(\Omega)}$
from
\cite[see (3.10), (3.11) and the final formula in \S3]{Leyk_Vexler_sinum16}.
So we again get \eqref{Ritz_er_Linf__}.


\begin{thebibliography}{99}



\bibitem{BrenScott}
S.~C.~Brenner and L.~R.~Scott, {\it The mathematical theory of finite element methods},  Springer-Verlag, New York, third ed., 2008.

\bibitem{Diet10}
K.~Diethelm,
{\it The analysis of fractional differential equations},
%volume 2004
  %of \emph
  {Lecture Notes in Mathematics},
 Springer-Verlag, Berlin, 2010.



\bibitem{evans}L.~C.~Evans, {\it Partial Differential Equations}, American Mathematical Society, 1998.

\bibitem{GTru}D.~Gilbarg and N.~S.~Trudinger, {\it Elliptic Partial Differential Equations of Second Order},
Springer-Verlag, Berlin, 1998.

\bibitem{gracia_etal_cmame}
J.~L.~Gracia, E.~O'Riordan and M.~Stynes,
{\it Convergence in positive time for a finite difference method applied to a fractional convection-diffusion problem},
Comput. Methods Appl. Math. (2017),
doi: {https://doi.org/10.1515/cmam-2017-0019}.

\bibitem{Grisvard}
P.~Grisvard, {\it Elliptic problems in nonsmooth domains},  Pitman, Boston, MA, 1985.

\bibitem{laz_semidiscr}
B.~Jin, R.~Lazarov and Z.~Zhou,
{\it Error estimates for a semidiscrete finite element method for fractional order parabolic equations},
SIAM J. Numer. Anal. 51  (2013), 445--466.

\bibitem{laz_2fully_16}
B.~Jin, R.~Lazarov and Z.~Zhou, {\it Two fully discrete schemes for fractional diffusion and
diffusion-wave equations with nonsmooth data}, SIAM J. Sci. Comput. 38 (2016), A146--A170.

\bibitem{laz_L1}
B.~Jin, R.~Lazarov and Z.~Zhou, {\it An analysis of the L1 scheme for the subdiffusion equation with nonsmooth data},
IMA J. Numer. Anal.  36  (2016), 197--221.

\bibitem{laz_thomee}
B.~Jin, R.~Lazarov, V.~Thom\'{e}e and Z.~Zhou,
{\it On nonnegativity preservation in finite element methods for subdiffusion equations},  Math. Comp. 86  (2017), 2239--2270.

{\color{blue}\bibitem{JZ17}
B.~Jin and Z.~Zhou, {\it An analysis of Galerkin proper orthogonal decomposition for subdiffusion}, ESAIM Math. Model. Numer. Anal.  51  (2017),   89-113.

\bibitem{Must18}  S.~Karaa, K.~Mustapha and A.~K.~Pani,
{\it Optimal error analysis of a FEM for fractional diffusion problems by energy arguments},
 J. Sci. Comput.  74  (2018),  519--535.}


\bibitem{Kondr67}
 V.~A.~Kondrat'ev, {\it Boundary value problems for elliptic equations in domains with conical
or angular points}, Trudy Moskov. Mat. Obshch. 16 (1967) 209--292, English transl. in: Trans.
Moscow Math. Soc. 16 (1967) 227--313.

\bibitem{Leyk_Vexler_sinum16}
D.~Leykekhman and B.~Vexler, {\it Finite element pointwise results on convex polyhedral domains},
SIAM J. Numer. Anal.  54  (2016),  561--587.

\bibitem{MazRssmn}
 V.~Maz'ya and J.~Rossmann, {\it Elliptic equations in polyhedral domains},  American Mathematical Society, Providence, RI, 2010.

 {\color{blue}
  \bibitem{McL10}
W.~McLean, {\it Regularity of solutions to a time-fractional diffusion equation}, ANZIAM J.  52  (2010),  123-–138.

\bibitem{NOS16}
R.~H.~Nochetto, E.~Ot\'{a}rola and A.~J.~Salgado, {\it A PDE approach to space-time fractional parabolic problems}, SIAM J. Numer. Anal.  54  (2016),  848--873.
}

\bibitem{sakamoto}
K.~Sakamoto and M.~Yamamoto, {\it Initial value/boundary value problems for fractional diffusion-wave equations and applications to some inverse problems}, J. Math. Anal. Appl.  382  (2011),   426--447.

\bibitem{Sch_80}
A.~H.~Schatz,
{\it A weak discrete maximum principle and stability of the finite element method in $L_\infty$   on plane polygonal domains. I},
Math. Comp.  34  (1980), 77--91.

\bibitem{SchWa_best}
A.~H.~Schatz and L.~B.~Wahlbin,
{\it On the quasi-optimality in $L_\infty$ of the $\mathring{H}^1$-projection into finite element spaces},
Math. Comp. 38 (1982), 1--22.

\bibitem{stynes_too_much_reg}
M.~Stynes, {\it Too much regularity may force too much uniqueness}, Fract. Calc. Appl. Anal. 19 (2016), 1554--1562.

\bibitem{stynes_etal_sinum17}
M.~Stynes, E.~O'Riordan and J.~L.~Gracia, {\it Error analysis of a finite difference method on graded meshes for a time-fractional diffusion equation}, SIAM J. Numer. Anal.  55  (2017),  1057--1079.




\bibitem{volkov}
E.~A.~Volkov,
{\it Differentiability properties of solutions of boundary value problems for the Laplace and Poisson equations on a rectangle},
Trudy Mat. Inst. Steklov.  77 (1965) 89--112.
	

\bibitem{xu_zik}
J.~Xu and L.~Zikatanov, {\it A monotone finite element scheme for convection-diffusion equations}, Math. Comp.  68  (1999),  1429--1446.




\end{thebibliography}
\end{document}